\newtheorem{theorem}{Theorem}[section]
\newtheorem{lemma}[theorem]{Lemma} 
\newtheorem{proposition}[theorem]{Proposition} 
\newtheorem{corollary}[theorem]{Corollary} 
\newtheorem{convention}[theorem]{Convention} 
\newtheorem{thmletter}{Theorem}
\newtheorem{corolletter}[thmletter]{Corollary} 
\newtheorem{example}[theorem]{Example}
\newcommand{\p}[1]{\noindent {\newline\bf #1.}}
\newcommand{\out}{\operatorname{Out}}
\newcommand{\aut}{\operatorname{Aut}}
\newcommand{\im}{\operatorname{im}}
\newcommand{\syl}{\operatorname{Syl}}
\newcommand{\bGamma}{\mathbf{\Gamma}}
\newcommand{\CSA}{CSA$_{\mathbb{Q}}$}
\newcommand{\BS}{$\operatorname{BS}$}
\newcommand{\Z}{$\mathbb{Z}_{\max}$}
\newcommand{\ZC}{$\mathcal{Z}_{\max}$}
\DeclareMathOperator{\FT}{\operatorname{FT}}
\newcounter{dawidcomments}
\newcommand{\dawid}[1]{\textbf{\color{red}(D\arabic{dawidcomments})} \marginpar{\scriptsize\raggedright\textbf{\color{red}(D\arabic{dawidcomments})Dawid: }#1}
\addtocounter{dawidcomments}{1}}
\newcounter{alancomments}
\newcommand{\alan}[1]{\textbf{\color{blue}(A\arabic{alancomments})} \marginpar{\scriptsize\raggedright\textbf{\color{blue}(A\arabic{alancomments})Alan: }#1}
\addtocounter{alancomments}{1}}
\newcounter{gilescomments}
\title[JSJ decompositions and polytopes for one-relator groups]
{JSJ decompositions and polytopes for two-generator one-relator groups}
\author{Giles Gardam}
\address{
Mathematisches Institut, Universit\"at Bonn, Endenicher Allee 60, 53115 Bonn,
Germany}
\email{gardam@math.uni-bonn.de}
\author{Dawid Kielak}
\address{
University of Oxford, Oxford, OX2 6GG,
UK}
\email{kielak@maths.ox.ac.uk}
\author{Alan D. Logan}
\address{
Heriot-Watt University, Edinburgh, EH14 4AS,
UK}
\email{A.Logan@hw.ac.uk}
\subjclass[2010]{20E34, 20F05, 20F65, 20F67, 20J05}
\keywords{One-relator group, JSJ decomposition, polytope.}
\begin{document}

\begin{abstract}
We provide a direct connection between the $\mathcal{Z}_{\max}$ (or essential) JSJ decomposition and the Friedl--Tillmann polytope of a hyperbolic two-generator one-relator group with abelianisation of rank $2$.

We deduce various structural and algorithmic properties, like the existence of a quadratic-time algorithm computing the \ZC{}-JSJ decomposition of such groups.
\end{abstract}
\maketitle

\[ \mbox{\emph{Dedicated to the memory of Stephen J. Pride}}\]
\vspace{0.5cm}

\section{Introduction}
\label{introduction}
A two-generator one-relator group is a group that admits a presentation of the form $\langle a, b\mid R\rangle$.
One-relator groups are a cornerstone of Geometric Group Theory (see, for example, the classic texts in combinatorial group theory \cite{mks} \cite{L-S} or the early work of Magnus \cite{magnus1930freiheitssatz} \cite{magnus1932wordproblem}), and they continue to have fruitful interactions with, for example, $3$-dimensional topology and knot theory. Motivated by the Thurston norm of a $3$-manifold, Friedl--Tillmann introduced a polytope for presentations $\langle a, b\mid R\rangle$ with $R\in F(a, b)'$ \cite{Friedl2015Two}, which has subsequently been shown to be a group invariant \cite{HennekeKielak2020} (another proof of this fact follows from the work of Friedl--L\"uck~\cite{FriedlLueck2017} combined with a more recent result of Jaikin-Zapirain--L\'opez-\'Alvarez on the Atiyah conjecture \cite{Jaikin-ZapirainLopez-Alvarez2018}).

In this paper we focus on hyperbolic one-relator groups, and more generally one-relator groups with no Baumslag--Solitar subgroups
$\langle a, t\mid t^{-1}a^mt=a^n\rangle$; we call these algebraic generalisations of hyperbolic one-relator groups \emph{\BS{}-free one-relator groups}.
JSJ-theory rose to prominence due to Sela's work on $\mathcal{Z}_{\max}$-JSJ decompositions (originally called ``essential'' JSJ decompositions) of hyperbolic groups, which are graph of groups decompositions encoding all the ``important'' virtually-$\mathbb{Z}$ splittings of the group;
these decompositions are significant because they are a group invariant (up to certain moves), and they, and hence the splittings they encode, govern for example the model theory \cite{Sela2009} and (coarsely) the outer automorphism group
\cite{Sela1997Structure}
\cite{levitt2005automorphisms}
of the group.
Moreover, computing \ZC{}-JSJ decompositions is a key step in the algorithm to solve the isomorphism problem for hyperbolic groups
\cite{sela1995isomorphism}
\cite{Dahmani2008ToralIso}
\cite{dahmani2011isomorphism}.
The notation \ZC{} relates to a certain class of maximal virtually cyclic subgroups, but for one-relator groups these subgroups are necessarily infinite cyclic; therefore one can view our results as being about ``\Z{}-JSJ decompositions'', but we maintain the \ZC{} notation for consistency with the literature.

Our main theorem connects \ZC{}-JSJ decompositions and Friedl--Tillmann polytopes~\cite{Friedl2015Two}.
This connection is significant because it means that, under our assumptions, the \ZC{}-JSJ decomposition of $\langle a, b\mid R\rangle$ can be understood simply by investigating the relator $R$, which yields fast algorithmic results (see below).
Furthermore, this  gives a connection between JSJ decompositions and these polytopes, as JSJ decompositions are refinements of \ZC{}-JSJ decompositions \cite[Section 9.5]{Guirardel2017JSJ}.

Friedl--Tillmann polytopes are only defined for two-generator one-relator groups, and such groups are of central importance in the theory of one-relator groups.
For example,
it is a classical result that every one-relator group embeds into a two-generator one-relator group \cite[Corollary 4.10.1]{mks}, while
Louder--Wilton have pointed out that all known examples of pathological one-relator groups have two generators \cite{louder2018negative}.
Moreover, \BS{}-free two-generator one-relator groups are particularly important:
It is a famous conjecture of Gersten that every \BS{}-free one-relator group is hyperbolic (see \cite[p.~228]{Gersten1992}, \cite[Remark, p.~734]{Allcock1999homological}), and Linton  proved that this conjecture reduces to the two generator case \cite[Corollary 5.7]{Linton2024}, i.e.\ every \BS{}-free one-relator group is hyperbolic if and only if every \BS{}-free two-generator one-relator group is hyperbolic.

We say that a one-ended group has \emph{trivial \ZC{}-JSJ decomposition} if it has a \ZC{}-JSJ decomposition which is a single vertex with no edges, and \emph{non-trivial} otherwise (see Convention \ref{conv:JSJ}).

\begin{thmletter}[Theorem \ref{thm:flexibilityHyperbolicBODYVERSION}]
\label{thm:flexibilityHyperbolic}
Let $G$ be a \BS{}-free group admitting a two-generator one-relator presentation $\mathcal{P}=\langle a, b\mid R\rangle$ with $R\in F(a, b)'\setminus\{1\}$.
The following are equivalent.
\begin{enumerate}
\item\label{flexibilityHyperbolic:1} $G$ has non-trivial \ZC{}-JSJ decomposition.
\item\label{flexibilityHyperbolic:2} There exists a word $T$ of shortest length in the $\aut(F(a, b))$-orbit of $R$ such that $T\in\langle a, b^{-1}ab\rangle$ but $T$ is not conjugate to $[a, b]^{k}$ for any $k\in\mathbb{Z}$.
\item\label{flexibilityHyperbolic:3} The Friedl--Tillmann polytope of $\mathcal{P}$ is a straight line, but not a single point.
\end{enumerate}
\end{thmletter}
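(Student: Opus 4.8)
The plan is to treat the Friedl--Tillmann polytope as the bridge, proving $(2)\Rightarrow(3)$ by a direct Fox calculus computation and establishing $(3)\Leftrightarrow(1)$ (together with the remaining implication $(3)\Rightarrow(2)$) by analysing the essential $\mathbb{Z}$-splittings of $G$. As a preliminary, note that $G$ is one-ended: it surjects $\mathbb{Z}^2$, so it is not virtually cyclic, and $R$ is not contained in a proper free factor of $F(a,b)$ (as $R\in F(a,b)'\setminus\{1\}$ is not a power of a primitive element), so $G$ is freely indecomposable. Hence $G$ has a well-defined \Z{}-JSJ decomposition, which by Convention~\ref{conv:JSJ} is trivial exactly when $G$ is rigid over $\mathcal{Z}_{\max}$ or $G$ is a single quadratically hanging vertex, i.e.\ the fundamental group of a closed $2$-orbifold.

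For $(2)\Rightarrow(3)$: since the Friedl--Tillmann polytope is a group invariant \cite{HennekeKielak2020}, I may compute it from the presentation $\langle a,b\mid T\rangle$. Write $T=W(a,\,b^{-1}ab)$ where $W$ is a word in the free basis $\{x,y\}$ of $\langle a,\,b^{-1}ab\rangle\cong F_2$. The chain rule for Fox derivatives gives $\partial T/\partial b=\overline{\partial W/\partial y}\cdot b^{-1}(a-1)$ in $\mathbb{Z}[F(a,b)]$, and feeding this into the fundamental identity $\phi(\partial T/\partial a)(t^a-1)+\phi(\partial T/\partial b)(t^b-1)=0$ over $\mathbb{Z}[\mathbb{Z}^2]$ shows that, up to a unit, both abelianised Fox derivatives have the common cofactor $\Delta$ equal to the image of $\partial W/\partial y$ under the composite $\mathbb{Z}[F(x,y)]\to\mathbb{Z}[\mathbb{Z}^2]\to\mathbb{Z}[\langle\bar a\rangle]$ sending both $\bar x$ and $\bar y$ to $\bar a$. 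In particular the Newton polytope of $\Delta$, hence the Friedl--Tillmann polytope of $\mathcal{P}$, lies on the line $\mathbb{R}\bar a$, so it is a point or a segment; it degenerates to a point precisely when this Laurent polynomial is a monomial, and a short analysis of the formula for a Fox derivative of a cyclically reduced word shows this happens if and only if $W$ is conjugate to $(x^{-1}y)^{\pm k}$ — equivalently, $T$ is conjugate to $[a,b]^k$. This proves $(2)\Rightarrow(3)$ and the ``only if'' half of $(3)\Rightarrow(2)$.

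The remaining implications rest on the following structural fact, which is the technical heart: for our $G$, having an essential $\mathbb{Z}$-splitting is equivalent to admitting a presentation $\langle a,b\mid R'\rangle$, with $R'$ in the $\aut(F(a,b))$-orbit of $R$, in which $R'$ is conjugate into $\langle a,\,b^{-1}ab\rangle$. The ``if'' direction is easy: such a presentation exhibits $G$ as the HNN extension $\langle a,c\mid R'(a,c)\rangle\ast_{\langle a\rangle=\langle c\rangle}$ over $\mathbb{Z}$ (here $a$ and $c$ have infinite order in the vertex group since $R'\neq1$ lies in the commutator subgroup), which is a non-trivial essential splitting unless the vertex group is infinite cyclic. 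The ``only if'' direction is obtained from a Bass--Serre-theoretic analysis of a cyclic splitting of the two-generator one-relator group $G$: one uses the two-generator hypothesis to force the splitting into the shape of a one-step HNN extension, organising the argument around the Magnus--Moldavanskii HNN decomposition of $\langle a,b\mid R'\rangle$ relative to a suitable epimorphism to $\mathbb{Z}$, and then re-coordinatising by an automorphism of $F(a,b)$. Granting this, $(3)\Rightarrow(1)$ and $(1)\Rightarrow(2)$ both follow by combining it with $(2)\Rightarrow(3)$, with the observation that the only closed $2$-orbifold group admitting a two-generator one-relator presentation whose relator lies in $F(a,b)'$ is $\langle a,b\mid[a,b]^k\rangle$, and with an isomorphism-rigidity statement for one-relator groups with torsion showing that $G\cong\langle a,b\mid[a,b]^k\rangle$ forces the shortest orbit representative of $R$ to be conjugate to $[a,b]^k$. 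Finally, the last point in $(3)\Rightarrow(2)$ — upgrading ``some orbit representative lies in $\langle a,\,b^{-1}ab\rangle$'' to ``a shortest one does'' — is handled by a Whitehead-type length-minimisation argument showing that the subgroup membership is preserved (or re-established) under the moves reducing relator length.

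The main obstacle is precisely the structural fact above: controlling \emph{all} essential cyclic splittings of a two-generator one-relator hyperbolic group tightly enough to recognise each of them, after an automorphism of $F(a,b)$, as a one-step HNN extension with relator in $\langle a,\,b^{-1}ab\rangle$; this requires a careful interplay of Bass--Serre theory, the Magnus--Moldavanskii HNN structure, and word-length minimisation. The secondary difficulty is the isomorphism-rigidity input identifying when $G$ is the exceptional orbifold group $\langle a,b\mid[a,b]^k\rangle$ — this is exactly what makes the clause ``$T$ not conjugate to $[a,b]^k$'' in $(2)$, the exclusion of the point case in $(3)$, and the exclusion of the single quadratically hanging vertex case in $(1)$ line up with one another.
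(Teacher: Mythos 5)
Your reduction of everything to the ``structural fact'' is where the proposal breaks down, because the only-if half of that fact \emph{is} the hard content of the theorem and your sketch does not engage with the actual obstruction. Kapovich--Weidmann already give what your Bass--Serre analysis would give: any essential cyclic splitting of such a $G$ is a one-loop HNN extension with two-generated base, and hence (after passing to a Nielsen-equivalent generating pair) a presentation $\langle x, y\mid \mathbf{t}(x, y^{-1}xy)\rangle$ with \emph{a set} of relators in $\langle x, y^{-1}xy\rangle$ (this is Theorem \ref{thm:KWquote} and Lemma \ref{lem:NEbasic} in the paper). The genuine difficulty is the jump from ``the kernel $N\leqslant F(x,y)$ is normally generated by a single element and, separately, by a subset of $\langle x, y^{-1}xy\rangle$'' to ``a single defining relator (indeed a shortest one in the $\aut(F(a,b))$-orbit of $R$) can be chosen inside $\langle x, y^{-1}xy\rangle$''. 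This does not follow from Bass--Serre theory, and the Magnus--Moldavanskii decomposition is of no evident help: it splits over a Magnus \emph{free} subgroup, generally of rank greater than one, with no visible relation to the cyclic JSJ edge group, and ``re-coordinatising by an automorphism of $F(a,b)$'' is precisely the unjustified step. In the torsion case the paper closes this gap with bespoke one-relator-with-torsion technology (Pride's two-generator theory, \cite[Lemma 5.1]{Logan2016Outer} resting on a strengthened B.B.~Newman Spelling Theorem, and Levitt's theorem on $\out$); in the torsion-free case no direct argument of the kind you describe is known, and the paper instead proves $(\ref{flexibilityHyperbolic:1})\Rightarrow(\ref{flexibilityHyperbolic:3})$ by an $L^2$-torsion polytope computation (Theorem \ref{thm:JSJFormTF}): one builds an aspherical cofinite $3$-complex out of the two compatible presentations, uses the Atiyah conjecture and $L^2$-acyclicity to decompose $P^{(2)}(G)$, and only then recovers $(\ref{flexibilityHyperbolic:2})$ from the polytope. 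The resulting fact that the HNN base is itself one-relator (Corollary \ref{corol:JSJform}) is explicitly flagged as new and surprising, so it cannot be taken as the output of a routine splitting analysis.

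Two smaller points. First, as written your logic never reaches $(\ref{flexibilityHyperbolic:3})\Rightarrow(\ref{flexibilityHyperbolic:1})$ or $(\ref{flexibilityHyperbolic:3})\Rightarrow(\ref{flexibilityHyperbolic:2})$: you prove only $(\ref{flexibilityHyperbolic:2})\Rightarrow(\ref{flexibilityHyperbolic:3})$ by Fox calculus, and the structural fact converts splittings into relator shapes, so you still need the converse polytope statement ``straight line $\Rightarrow$ some automorphic image of $R$ lies in $\langle a, b^{-1}ab\rangle$'' (the paper's Lemma \ref{lem:StraightLinePolytopes}: after a Nielsen move the trace of the relator lies in a width-one strip, so only the conjugates $x$ and $y^{-1}xy$ occur, followed by the Whitehead/peak-reduction upgrade of Proposition \ref{prop:whitehead}); this is easy but absent from your outline. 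Second, your Fox-calculus derivation of $(\ref{flexibilityHyperbolic:2})\Rightarrow(\ref{flexibilityHyperbolic:3})$ and of the point case is fine in substance and is a reasonable alternative to the paper's elementary Minkowski-sum argument, so that part is not the problem.
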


It is unclear how intrinsic hyperbolicity is to this theorem.
If Gersten's conjecture is true, then the result is simply for hyperbolic two-generator one-relator groups.
On the other hand, an analogue of Theorem \ref{thm:flexibilityHyperbolic} will hold for any class of one-relator groups for which Theorem \ref{thm:JSJFormTF}, below, on Friedl--Tillmann polytopes is applicable, and which satisfy a certain description of splittings first given by Kapovich--Weidmann \cite[Theorem 3.9]{Kapovich1999structure}, and extended by the authors \cite[Corollary 8.6]{gardam2021algebraically}.
(Indeed, \ZC{}-JSJ decompositions are not required. They just give context and a convenient language for our results, which instead can be stated in terms of ``essential $\mathbb{Z}$-splittings'', as in Theorem \ref{thm:KWquote}.)

We now illustrate Theorem \ref{thm:flexibilityHyperbolic} with an example.

\begin{example}
Let $G$ be the group defined by the presentation $\langle a, b\mid(a^2b^2a^{-1}b^{-1}a^{-1}b^{-1})^n\rangle$ where $n>1$.
By using Whitehead's algorithm, it can be seen that (\ref{flexibilityHyperbolic:2}) of Theorem \ref{thm:flexibilityHyperbolic} does not hold, and so $G$ has trivial \ZC{}-JSJ decomposition.
Alternatively one can consider the {Friedl--Tillmann polytope}, which we see from Figure \ref{fig:PolytopeExample} is a triangle.
Therefore, (\ref{flexibilityHyperbolic:3}) of Theorem \ref{thm:flexibilityHyperbolic} does not hold, and so $G$ has trivial \ZC{}-JSJ decomposition.
\begin{figure}[h]
\centering
\begin{minipage}{0.24\textwidth}
  \begin{tikzpicture}
\draw[step=1cm,gray,very thin, dashed] (-0.4,-0.4) grid (2.4,2.4);
\draw (-0.4,0) -- (2.4, 0);
\draw (0,-0.4) -- (0,2.4);
\draw[color=blue] (0,0) -- (2.1,0.1) -- (2.1,2.1) -- (0.9,2.1) -- (0.9, 1.1) -- (0.1, 1.1)
-- (0.1, 0.1) -- (1.9,0.1) -- (1.9,1.9) -- (1.1,1.9) -- (1.1, 0.9) -- (0.15, 0.9) -- (0, 0);
  \end{tikzpicture}
\end{minipage}
\begin{minipage}{0.24\textwidth}
  \begin{tikzpicture}
\draw[step=1cm,gray,very thin, dashed] (-0.4,-0.4) grid (2.4,2.4);
\draw (-0.4,0) -- (2.4, 0);
\draw (0,-0.4) -- (0,2.4);
\filldraw[fill=green!20!white, fill opacity=0.5, draw=green!50!black] (0,0) -- (2,0) -- (2,2) -- (1,2)  -- (0, 1) -- (0, 0);
\filldraw[draw=green!50!black, fill=white] (0,0) circle (0.1cm);
\filldraw[draw=green!50!black, fill=white] (2,0) circle (0.1cm);
\filldraw[draw=green!50!black, fill=white] (2,2) circle (0.1cm);
\filldraw[draw=green!50!black, fill=white] (1,2) circle (0.1cm);
\filldraw[draw=green!50!black, fill=white] (0,1) circle (0.1cm);
  \end{tikzpicture}
  \end{minipage}
  \begin{minipage}{0.24\textwidth}
  \begin{tikzpicture}
\draw[step=1cm,gray,very thin, dashed] (-0.4,-0.4) grid (2.4,2.4);
\draw (-0.4,0) -- (2.4, 0);
\draw (0,-0.4) -- (0,2.4);
\draw[green!50!black] (-0.05,-0.05) -- (2,-0.05) -- (2,2) -- (1,2)  -- (-0.05, 1) -- (-0.05, -0.05);
\draw[red] (0,0) -- (0.9,0) -- (0.9,0.9) -- (0,0.9)  -- (0, 0);
\draw[red] (1,1) -- (1.9,1) -- (1.9,1.9) -- (1,1.9)  -- (1, 1);
\draw[red] (1,0) -- (1.9,0) -- (1.9,0.9) -- (1,0.9)  -- (1, 0);
\filldraw[draw=red, fill=white] (0,0) circle (0.1cm);
\filldraw[draw=red, fill=white] (1,0) circle (0.1cm);
\filldraw[draw=red, fill=white] (1,1) circle (0.1cm);
  \end{tikzpicture}
    \end{minipage}
    \begin{minipage}{0.24\textwidth}
  \begin{tikzpicture}
\draw[step=1cm,gray,very thin, dashed] (-0.4,-0.4) grid (2.4,2.4);
\draw (-0.4,0) -- (2.4, 0);
\draw (0,-0.4) -- (0,2.4);
\filldraw[fill=red!20!white, fill opacity=0.5, draw=red] (0,0) -- (1,0) -- (1,1) -- (0, 0);
\filldraw[draw=red, fill=white] (0,0) circle (0.1cm);
\filldraw[draw=red, fill=white] (1,0) circle (0.1cm);
\filldraw[draw=red, fill=white] (1,1) circle (0.1cm);
  \end{tikzpicture}
\end{minipage}

  \caption{To obtain the Friedl--Tillmann polytope,
trace the reduced word $(a^2b^2a^{-1}b^{-1}a^{-1}b^{-1})^n$ on the $ab$-plane to obtain a closed loop $\gamma$, as in the first diagram (this is independent of $n$).
Take the convex hull of $\gamma$, as in the second diagram; this is a polytope $P'$.
Then take the bottom-left corner of all squares contained in $\gamma$ that touch the vertices of $P'$, as in the third diagram.
The Friedl--Tillmann polytope $P$ is the polytope with these points as vertices, as in the fourth diagram.\\\\
Note that the Friedl--Tillmann polytope is in fact a ``marked'' polytope, but we only care about the shape so we have omitted these details from this example.
In the third diagram we took the bottom-left corner of the squares; this is different from Friedl and Tillmann who take the centre points of these squares, but this is not an issue because the polytope is only well-defined up to translation.
}
  \label{fig:PolytopeExample}
\end{figure}
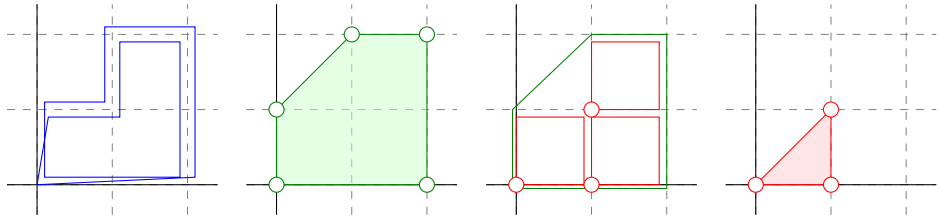
\end{example}

The proof of Theorem \ref{thm:flexibilityHyperbolic} splits into two cases: either $G$ is a one-relator group with torsion, or is torsion-free.
The difficulty lies in the torsion-free case; see in particular Section \ref{sec:torsionFree}.

If $G$ does not have $\mathbb{Z}^2$ abelianisation, so $R\not\in F(a, b)'$, then the Friedl--Tillmann polytope is less useful (it is \emph{always} a straight line!).
However, we can still ask if (\ref{flexibilityHyperbolic:1}) and (\ref{flexibilityHyperbolic:2}) from Theorem \ref{thm:flexibilityHyperbolic} are equivalent.
To prove this, we could try to apply the machinery underlying these polytopes, which is that of the universal $L^2$ torsion, but unfortunately this lies beyond our current understanding of these $L^2$ invariants.
We can however use more classical techniques to prove this equivalence for one-relator groups with torsion.

\p{One-relator groups with torsion}
The easier case in the proof of Theorem \ref{thm:flexibilityHyperbolic} is that of one relator groups with torsion. Such groups are characterised by having presentations $\langle \mathbf{x}\mid S^n\rangle$ where $n>1$ \cite[Proposition II.5.18]{L-S}. They are always hyperbolic \cite[Theorem IV.5.5]{L-S} (Nyberg-Brodda has put this fact in its historical context \cite{brodda2020b}), and as such these groups are of great interest as test-cases for both hyperbolic groups and one-relator groups. For example, one-relator groups with torsion are residually finite \cite{wise2012riches}, while it is an open problem of Gromov whether all hyperbolic groups are residually finite; similarly, one-relator groups with torsion were shown to be coherent in \cite{louder2020one}, while it took another five years to show that in fact all one-relator groups are coherent \cite{JaikinLinton2023}.

Our results for one-relator groups with torsion include the case of $R\not\in F(a, b)'$.
A \emph{primitive element} of $F(a, b)$ is an element which is part of a basis for $F(a, b)$.
\begin{thmletter}
[Theorem \ref{thm:flexibilityWithTorsionBODYVERSION}]
\label{thm:flexibilityWithTorsion}
Let $G$ be a group admitting a two-generator one-relator presentation $\mathcal{P}=\langle a, b\mid R\rangle$ where $R= S^n$ in $F(a, b)$ with $n>1$ maximal and $S\in F(a, b)$ is non-trivial and non-primitive.
The following are equivalent.
\begin{enumerate}
\item\label{flexibilityHyperbolic:1} $G$ has non-trivial \ZC{}-JSJ decomposition.
\item\label{flexibilityHyperbolic:2} There exists a word $T$ of shortest length in the $\aut(F(a, b))$-orbit of $S$ such that $T\in\langle a, b^{-1}ab\rangle$ but $T$ is not conjugate to $[a, b]^{\pm 1}$.
\end{enumerate}
\end{thmletter}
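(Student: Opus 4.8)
The plan is to establish both implications by classical combinatorial techniques --- the Magnus--Karrass--Solitar rewriting of one-relator groups, Newman's spelling theorem, Whitehead's algorithm, and the standard dictionary for \Z{}-JSJ decompositions of one-ended hyperbolic groups --- avoiding the $L^2$-machinery. First I would record reductions. The group $G$ is hyperbolic \cite[Theorem~IV.5.5]{L-S}, and it is one-ended: a two-generator one-relator group is freely decomposable only when the relator lies, after some automorphism of $F(a,b)$, in a proper free factor, which for $R=S^n$ would make $S$ primitive or a proper power, and $G$ is infinite and not virtually cyclic. Both condition~(1) and condition~(2) depend only on the $\aut(F(a,b))$-orbit of $S$ --- for condition~(1) because applying $\varphi\in\aut(F(a,b))$ to $\mathcal P$ leaves $G$ unchanged while $S^n$ and $\varphi(S)^n$ are the only relators defining it (unique roots in free groups), and for condition~(2) by inspection. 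Using Whitehead's theorem to move among shortest-length representatives, I may then assume $S$ itself is of shortest length in its orbit and that $S\in\langle a,b^{-1}ab\rangle$ whenever some shortest-length representative is. Writing $a_i=b^{-i}ab^i$, so that $\langle a,b^{-1}ab\rangle=\langle a_0,a_1\rangle$ is free of rank $2$ and $[a,b]=a_0^{-1}a_1$, condition~(2) becomes: $S\in\langle a_0,a_1\rangle$, and $S$ is conjugate to neither $a_0^{-1}a_1$ nor its inverse.

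The key structural input is the dictionary that, for a one-ended hyperbolic group, the \Z{}-JSJ decomposition is trivial in the sense of Convention~\ref{conv:JSJ} exactly when the group is either rigid (admits no non-trivial splitting over a virtually cyclic subgroup) or is the fundamental group of a closed hyperbolic $2$-orbifold (a single maximal quadratically-hanging vertex with no incident edges); see \cite[Section~9.5]{Guirardel2017JSJ} and the treatment of quadratically-hanging vertices there. I would also record that the two-generator one-relator groups with torsion which are closed hyperbolic $2$-orbifold groups are exactly those whose relator is, up to $\aut(F(a,b))$, either $[a,b]^m$ (torus with one cone point) or $(a^2b^2)^m$ (Klein bottle with one cone point), $m\ge2$ --- this follows from matching a deficiency-one two-generator presentation against the standard presentation of a closed $2$-orbifold group, together with the rigidity of one-relator presentations with torsion up to automorphisms and inversion of the relator. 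In the first family the root of the relator is conjugate to $[a,b]^{\pm1}$, while a finite Whitehead-algorithm computation shows that no shortest-length representative of $a^2b^2$ lies in $\langle a_0,a_1\rangle$.

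For condition~(2)~$\Rightarrow$~condition~(1): since $S$ is neither primitive in $F(a,b)$ nor a proper power, it genuinely involves both $a_0$ and $a_1$ (otherwise $S=a_0^k$ with $|k|\ge2$, a proper power). The $b$-exponent sum of $S$ being zero, the Magnus--Karrass--Solitar rewriting presents $G$ as the HNN extension $H\ast_{\langle a_0\rangle=\langle a_1\rangle}$ with stable letter $b$, where $H=\langle a_0,a_1\mid S^n\rangle$; because $S$ uses exactly the two consecutive conjugates $a_0$ and $a_1$, the two Magnus subgroups are the infinite cyclic groups $\langle a_0\rangle$ and $\langle a_1\rangle$ (their embedding in $H$ being the Freiheitssatz). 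This is a non-trivial splitting of $G$ over $\mathbb Z$, so $G$ is not rigid; and by the previous paragraph, were $G$ a closed hyperbolic $2$-orbifold group then $S$ would lie in the $\aut(F(a,b))$-orbit of $[a,b]^{\pm1}$ or of $(a^2b^2)^{\pm1}$, contradicting that $S$ is a shortest-length representative in $\langle a_0,a_1\rangle$ not conjugate to $(a_0^{-1}a_1)^{\pm1}$. Hence $G$ is neither rigid nor Fuchsian, so the dictionary gives that it has non-trivial \Z{}-JSJ decomposition.

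For condition~(1)~$\Rightarrow$~condition~(2): if $S\sim(a_0^{-1}a_1)^{\pm1}$ then $G\cong\langle a,b\mid[a,b]^n\rangle$ is a closed hyperbolic $2$-orbifold group, hence has trivial \Z{}-JSJ decomposition by the dictionary, a contradiction; so $S\not\sim(a_0^{-1}a_1)^{\pm1}$. It remains to show $S\in\langle a_0,a_1\rangle$, and this is the main obstacle: it amounts to classifying the non-trivial virtually-cyclic splittings of a one-ended two-generator one-relator group with torsion. The plan is to fix such a splitting (one exists, as $G$ is not rigid), observe that the finite subgroup $\langle S\rangle$ --- which contains a conjugate of every finite subgroup of $G$, by Newman's spelling theorem --- is elliptic, so a conjugate of $S$ lies in a vertex group, and then use Newman's spelling theorem together with the Freiheitssatz to control how the relator meets the vertex and edge groups; this should force that, after an automorphism of $F(a,b)$ and a cyclic shift, the splitting is one produced by the Magnus--Karrass--Solitar hierarchy, with the edge group being virtually cyclic rather than a Magnus subgroup of rank $\ge2$ --- whence $\sigma_b(S)=0$ and $S$ involves exactly two consecutive conjugates $a_i,a_{i+1}$ of $a$, i.e.\ $S\in\langle a_0,a_1\rangle$ --- unless the splitting merely records an essential simple closed curve of a closed $2$-orbifold structure on $G$, which we have excluded. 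Carrying out this classification is the technical heart of the theorem; the remaining steps (the Magnus hierarchy, Whitehead's algorithm, the orbifold-presentation count, and the JSJ dictionary) are routine or citable.
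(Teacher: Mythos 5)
Your proposal founders on the ``dictionary'' and on the explicit Whitehead claim used to prop it up, and the two errors interact. Under the automorphism $a\mapsto ab^{-1}$, $b\mapsto b$, the word $a^2b^2$ is sent to $ab^{-1}ab=a\cdot(b^{-1}ab)\in\langle a, b^{-1}ab\rangle$, of the same length $4$; by Proposition \ref{prop:whitehead} a cyclic shift of this word is of shortest length in the $\aut(F(a,b))$-orbit of $a^2b^2$ and lies in $\langle a, b^{-1}ab\rangle$, and it is not conjugate to $[a,b]^{\pm1}$. So your claim that ``no shortest-length representative of $a^2b^2$ lies in $\langle a_0,a_1\rangle$'' is simply false, and condition (2) \emph{holds} for $S=a^2b^2$: the theorem you are proving asserts that $\langle a,b\mid (a^2b^2)^n\rangle$, the Klein bottle with one cone point, has \emph{non-trivial} \Z{}-JSJ decomposition. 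This is correct: rewriting the relator as $(ab^{-1}ab)^n$ exhibits $G$ as an HNN-extension of $\langle a,y\mid (ay)^n\rangle$ over maximal cyclic subgroups, which Proposition \ref{prop:JSJsEverywhere}/Theorem \ref{thm:JSJclassificationONEREL} identify as the \Z{}-JSJ (the Dehn twist $b\mapsto ab$ has infinite order in $\out(G)$, consistent with $\out$ being virtually-$\mathbb{Z}$). Hence ``closed hyperbolic $2$-orbifold group'' does \emph{not} imply trivial \Z{}-JSJ in the sense of Convention \ref{conv:JSJ}; among the groups at hand the only exceptional case is $\langle a,b\mid [a,b]^n\rangle$ (Rosenberger, as used in Proposition \ref{prop:Fuchsian}). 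With the false computation corrected, your (2) $\Rightarrow$ (1) argument either contradicts your own dictionary in the $a^2b^2$ family or proves nothing there; the dictionary is the part that has to go, and then you need a genuine argument (the paper uses maximality of $\langle a\rangle$, $\langle b^{-1}ab\rangle$ via Newman's lemma together with Proposition \ref{prop:JSJsEverywhere}, which rests on $\out(G)$ being virtually-$\mathbb{Z}$ and Levitt's uniqueness of reduced trees, not on a ``rigid or closed orbifold'' dichotomy) to see that the Magnus splitting really is universally \Z{}-elliptic.

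The direction (1) $\Rightarrow$ (2) is also not a proof as written: the step that a non-trivial splitting forces the relator, up to $\aut(F(a,b))$ and a cyclic shift, into $\langle a,b^{-1}ab\rangle$ is exactly what you call ``the technical heart'' and is only sketched (``this should force\dots''). The paper does not classify splittings from scratch here: it passes from a non-trivial \Z{}-JSJ to an essential $\mathbb{Z}$-splitting, hence to infinite $\out(G)$ by Levitt, and then invokes an existing structural result for two-generator one-relator groups with torsion with infinite outer automorphism group to produce a relator in $\langle a, b^{-1}ab\rangle$ defining $G$. Even granting that, condition (2) asks for a \emph{shortest-length} element of the orbit inside $\langle a,b^{-1}ab\rangle$, and this requires the peak-reduction argument of Proposition \ref{prop:whitehead}; your opening reduction quietly assumes this point rather than proving it. So both implications have genuine gaps, the first compounded by a concrete false claim.
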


The conditions on $S$ are because \ZC{}-JSJ decompositions are only meaningful for one-ended groups, and if $S$ is primitive or trivial then the group defined by $\langle a, b\mid S^n\rangle$ is not one-ended (it is a free product of cyclic groups, $\mathbb{Z}\ast C_n$ or $\mathbb{Z\ast Z}$). In contrast, all the groups in Theorem \ref{thm:flexibilityHyperbolic} are one-ended.

\p{Forms of {\boldmath \ZC{}}-JSJ decompositions}
The following corollary describes the \ZC{}-JSJ decompositions of the groups from Theorems \ref{thm:flexibilityHyperbolic} and \ref{thm:flexibilityWithTorsion} as HNN-extensions of one-relator groups.

In the torsion-free case, a previous result of the authors describes such \ZC{}-JSJ decompositions as HNN-extensions \cite[Corollary 8.6]{gardam2021algebraically}; Corollary \ref{corol:JSJform} further says that the base groups are one-relator groups, which is surprising.
In the corollary,
we view $R$ as a power $S^n$ for $n\geqslant1$ maximal. We view $R$ like this because Theorem \ref{thm:flexibilityWithTorsion} deals with the root $S$ of the relator $R=S^n$. We lose nothing by doing this since if the group in the corollary is torsion-free then $n=1$ and $R=S$.
We use $|W|$ to denote the length of a word $W\in F(\mathbf{x})$.
\begin{corolletter}[Corollary \ref{corol:JSJformBODYVERSION}]
\label{corol:JSJform}
Let the group $G$ and the presentation $\mathcal{P}$ be as in Theorem \ref{thm:flexibilityHyperbolic} or \ref{thm:flexibilityWithTorsion}, and write $R=S^n$ for $n\geqslant1$ maximal.
Suppose that $G$ has non-trivial \ZC{}-JSJ decomposition $\mathbf{\Gamma}$. Then the graph underlying $\mathbf{\Gamma}$ consists of a single rigid vertex and a single loop edge.
Moreover, the corresponding HNN-extension has vertex group $\langle a, y\mid T_0^n(a, y)\rangle$, stable letter $b$, and attaching map given by $y=b^{-1}ab$, where $T_0(a, b^{-1}ab)$ is the word $T$ from the theorem. Finally, $|T_0|<|S|$.
\end{corolletter}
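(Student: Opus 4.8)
The plan is to read the HNN-structure off directly from condition~(2) of Theorem~\ref{thm:flexibilityHyperbolic} (in the torsion-free case) or Theorem~\ref{thm:flexibilityWithTorsion} (in the torsion case), and then to check that the splitting so obtained really is $\bGamma$. Since $\bGamma$ is non-trivial, these theorems supply a word $T$ of shortest length in the $\aut(F(a,b))$-orbit of $S$ with $T \in \langle a, b^{-1}ab \rangle$ and $T$ not conjugate to a power of $[a,b]$. Set $y := b^{-1}ab$; the subgroup $\langle a, y\rangle$ is freely generated by $a$ and $y$, so there is a reduced word $T_0 \in F(a,y)$ with $T = T_0(a, b^{-1}ab)$. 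Applying a suitable automorphism of $F(a,b)$ — which preserves maximality of the root — we may assume $S = T$, so $G \cong \langle a, b \mid T_0^n(a, b^{-1}ab)\rangle$. Adding $y$ as a redundant generator, Tietze transformations give
\[
G \;\cong\; \langle a, b, y \mid y = b^{-1}ab,\ T_0^n(a,y)\rangle,
\]
which is precisely the HNN-extension with vertex group $V := \langle a, y \mid T_0^n(a,y)\rangle$, stable letter $b$, and attaching map $y = b^{-1}ab$.

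Next I would verify that this is a genuine HNN-extension over an infinite cyclic edge group. Because $G$ is one-ended (it has a non-trivial \Z{}-JSJ decomposition), $T_0$ cannot be a power of $a$ alone nor of $y$ alone — otherwise $T$ would be conjugate to a power of $a$ and $G$ would be a free product $\mathbb{Z} \ast C_m$ — so $T_0$ involves both letters, and Magnus's Freiheitssatz shows that $a$ and $y$ have infinite order in $V$. Thus the displayed presentation is an honest HNN-extension over $\mathbb{Z}$; one then checks that, possibly after the standard adjustment to split over the maximal two-ended subgroup of $V$ containing $\langle a\rangle$, it is a \Z{}-splitting of $G$.

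It remains to identify this splitting with $\bGamma$. Here I would invoke Kapovich--Weidmann \cite[Theorem A]{Kapovich1999structure}: since $G$ is two-generated and freely indecomposable, $\bGamma$ has a single vertex and a single loop edge. The vertex is rigid, for if it were quadratically hanging then this single-loop HNN-extension would present the one-ended group $G$ as a closed surface or $2$-orbifold group, whose \Z{}-JSJ decomposition is trivial — contradicting non-triviality. By uniqueness of the \Z{}-JSJ decomposition (up to the usual moves), $\bGamma$ is therefore the splitting constructed above, with rigid vertex group $\langle a, y \mid T_0^n(a,y)\rangle$ and the stated attaching map. For the length bound, substitute $y \mapsto b^{-1}ab$ and $y^{-1}\mapsto b^{-1}a^{-1}b$ into the reduced word $T_0$ and freely reduce: cancellation occurs only inside the maximal runs of $y^{\pm 1}$, giving $|T| = |T_0| + 2r$ where $r \geqslant 1$ is the number of such runs (positive because $T_0$ involves $y$). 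Hence $|T_0| < |T| \leqslant |S|$, the last inequality because $T$ has shortest length in the $\aut(F(a,b))$-orbit of $S$ and $S$ lies in that orbit.

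I expect the main obstacle to be the identification of $\bGamma$ with the constructed splitting: this is where one must combine the Kapovich--Weidmann structure theorem, the exclusion of a quadratically hanging vertex, and uniqueness of the \Z{}-JSJ decomposition, while keeping careful track of the hypotheses of Theorems~\ref{thm:flexibilityHyperbolic} and \ref{thm:flexibilityWithTorsion} (and of the passage to a maximal two-ended edge group). The remaining steps — the Tietze transformations, the Freiheitssatz argument, and the length count — should be routine.
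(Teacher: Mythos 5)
Your construction of the splitting and the length count are essentially the paper's: extract $T$ from condition (2), pass to $\langle a,b\mid T_0^n(a,b^{-1}ab)\rangle$ via an automorphism of $F(a,b)$, Tietze-transform to the HNN-extension with vertex group $\langle a,y\mid T_0^n(a,y)\rangle$ and stable letter $b$, and deduce $|T_0|<|T|\leqslant|S|$. The genuine gap is in the step you yourself flag as the obstacle: identifying this splitting with $\bGamma$. In the paper this identification is not re-derived ad hoc; it is exactly the content of Theorem \ref{thm:KWquote} (RG/torsion-free case) and Theorem \ref{thm:JSJclassificationONEREL} (torsion case), both resting on Proposition \ref{prop:JSJsEverywhere} (``a splitting that looks like a \Z{}-JSJ decomposition is one''). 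Your substitute does not go through as written. First, Kapovich--Weidmann's structure theorem concerns torsion-free (RG) groups, so it says nothing about the groups of Theorem \ref{thm:flexibilityWithTorsion}; for those the paper instead uses that every \Z{}-splitting is a $\mathbb{Z}$-splitting and that $\out(G)$ is virtually-$\mathbb{Z}$, so extra edges or a flexible vertex would contribute too many Dehn twists. Second, ``uniqueness of the \Z{}-JSJ decomposition up to the usual moves'' does not identify your splitting with $\bGamma$: the usual uniqueness is only within a deformation space, and, more to the point, you have not shown that your HNN splitting is a \Z{}-JSJ tree at all (universally \Z{}-elliptic and dominating). Knowing that both $\bGamma$ and your splitting are single-loop decompositions is not enough; one needs that an essential \Z{}-tree with an edge is the unique such tree and is a \Z{}-JSJ tree, which the paper obtains via Levitt's theorem on reduced trees inside the proof of Proposition \ref{prop:JSJsEverywhere}.

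Third, the hedge ``possibly after the standard adjustment to split over the maximal two-ended subgroup of $V$ containing $\langle a\rangle$'' undermines the conclusion: if such an adjustment were needed, the vertex group would no longer be $\langle a,y\mid T_0^n(a,y)\rangle$, which is the whole point of the corollary. One must prove that $\langle a\rangle$ and $\langle b^{-1}ab\rangle$ are maximal cyclic in $G$: in the Theorem \ref{thm:flexibilityHyperbolic} case this follows from the $\mathbb{Z}^2$ abelianisation (as in the proof of Theorem \ref{thm:KWquote}), and in the torsion case the paper uses Newman's lemma (as in the proof of Theorem \ref{thm:JSJclassificationONEREL}); your Freiheitssatz argument only gives infinite order, not maximality. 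Your sketch excluding a quadratically hanging vertex is also shakier than needed, especially in the presence of torsion, where the paper instead relies on Lemma \ref{lem:RGnoflexible} and the $\out(G)$ argument. With these points repaired---most efficiently by simply invoking Theorems \ref{thm:KWquote} and \ref{thm:JSJclassificationONEREL} at the identification step---your argument coincides with the paper's proof.
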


In the case of Theorem \ref{thm:flexibilityWithTorsion}, the base group $\langle a, y\mid T_0(a, y)\rangle$ also satisfies the conditions of Theorem \ref{thm:flexibilityWithTorsion}. Therefore, the ``$|T_0|<|S|$'' condition gives a strong accessibility result, similar to a general result of Louder--Touikan \cite{Louder2017Strong}, but where all the groups involved are one-relator groups. If we are in the case of Theorem \ref{thm:flexibilityHyperbolic} then $T_0(a, y)$ may not be in the derived subgroup, and so we do not obtain the analogous result.

\begin{example}
\label{ex:JSJexample}
Set $G=\langle a, b\mid (a^{-2}b^{-1}a^2b)^n\rangle$, $n>1$.
Then $G$ is an HNN-extension of the group $H=\langle a, y\mid (a^{-2}y^2)^n\rangle$, with attaching map $y=b^{-1}ab$.
This obvious decomposition of $G$ as an HNN-extension corresponds to its \ZC{}-JSJ decomposition, by Corollary \ref{corol:JSJform}.
\end{example}

This example illustrates an important point: we can spot \ZC{}-JSJ decompositions of the groups in Theorem \ref{thm:flexibilityHyperbolic} or \ref{thm:flexibilityWithTorsion}, since if a decompositions looks like a \ZC{}-JSJ decomposition then it is indeed a \ZC{}-JSJ decomposition (see Theorems \ref{thm:KWquote} and \ref{thm:JSJclassificationONEREL}).
Now, the group $G$ from Example \ref{ex:JSJexample} can also be viewed as an HNN-extension of the group $H'=\langle a, z\mid (a^{-2}z)^n\rangle$, with attaching map $z=b^{-1}a^2b$. This is not a \ZC{}-splitting (as $\langle z\rangle$ is not maximal in $G$), but it could be a JSJ decomposition of $G$. However, we were unable to prove analogues of Theorems \ref{thm:KWquote} and \ref{thm:JSJclassificationONEREL} for JSJ decompositions, and so we are unable to conclude that this splitting is in fact a JSJ decomposition.

\p{Algorithmic consequences}
Computing \ZC{}-JSJ decompositions has implications for the isomorphism problem, gives information about outer automorphism groups, and is potentially an important first step for many algorithmic questions about the elementary theory of hyperbolic groups \cite{Dahmani2008ToralIso}.
However, current algorithms for computing JSJ decompositions or \ZC{}-JSJ decompositions have bad {computational} complexity.

For example, algorithms of Barrett \cite{Barrett2018Computing}, Dahmani--Groves \cite{Dahmani2008ToralIso}, Dahmani--Guirardel \cite{dahmani2011isomorphism}, and Dahmani--Touikan \cite{Dahmani2019Deciding} work in very general settings, but each of them when applied to hyperbolic groups has no recursive bound on its time complexity. For the algorithms of Barrett, Dahmani--Groves, and Dahmani--Guirardel, this is because they require computation of the hyperbolicity constant $\delta$, and there is no recursive bound on the time complexity for computing $\delta$ (as hyperbolicity is undecidable). The algorithm of Dahmani--Touikan requires a solution to the word problem, but all known general solutions for hyperbolic groups require preprocessing for which there is no recursive bound on the time complexity (for example, computing an automatic structure or Dehn presentation). Even if we assume an oracle gives us $\delta$, these algorithms are brute force algorithms, with each proceeding by {detecting} a splitting and then searching blindly through all presentations of the given group to find some presentation which realises the detected splitting; this procedure clearly has an awful time complexity.
As far as the authors are aware, the only algorithm which computes the JSJ decompositions for a class of hyperbolic groups and which has a known (reasonable) bound on its time complexity is due to Suraj Krishna \cite{SurajKrishna2020Immersed}, but here the computed bound is doubly exponential and is not in general applicable to one-relator groups.

Theorems \ref{thm:flexibilityHyperbolic} and \ref{thm:flexibilityWithTorsion} can be applied to give fast algorithms for both detecting and computing \ZC{}-JSJ decompositions in our setting. Firstly, there is a quadratic-time algorithm to find the \ZC{}-JSJ decomposition of a given group (essentially, the algorithm is to compute all of the shortest possible elements of the $\aut(F(a, b))$-orbit of the relator $R$).

\begin{corolletter}[Corollary \ref{corol:JSJcompBODYVERSION}]
\label{corol:JSJcomp}
There exists an algorithm with input a presentation $\mathcal{P}=\langle a, b\mid R\rangle$ of a group $G$ from Theorem \ref{thm:flexibilityHyperbolic} or \ref{thm:flexibilityWithTorsion}, and with output the \ZC{}-JSJ decomposition of $G$.

This algorithm terminates in $O(|R|^2)$-steps.
\end{corolletter}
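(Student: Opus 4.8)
The plan is to run the entire argument off Theorems~\ref{thm:flexibilityHyperbolic} and~\ref{thm:flexibilityWithTorsion} together with Corollary~\ref{corol:JSJform}: by those results, whether $G$ has non-trivial \Z{}-JSJ decomposition is equivalent to a combinatorial condition on (the maximal root $S$ of) the relator $R$ --- namely that the $\aut(F(a,b))$-orbit of $S$ contains a word $T$ of shortest length lying in the subgroup $\langle a, b^{-1}ab\rangle$ and not conjugate to the relevant power of $[a,b]$ --- and, when this holds, Corollary~\ref{corol:JSJform} writes the decomposition down explicitly as the HNN-extension with vertex group $\langle a, y\mid T_0^n(a,y)\rangle$, stable letter $b$, and edge map $y\mapsto b^{-1}ab$, where $T_0(a,y)$ is determined by $T$. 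Note that the Friedl--Tillmann polytope itself plays no role in the algorithm; only condition~(2) of the two theorems is used, which is why the $R\notin F(a,b)'$ case of Theorem~\ref{thm:flexibilityWithTorsion} is handled on the same footing. Since the input is promised to present a group of the stated form, no undecidable verification (of hyperbolicity, of the RG condition, and so on) is required. So the algorithm is: (a) extract the maximal $n$ and the word $S$ with $R=S^n$; (b) compute the shortest-length representatives of the $\aut(F(a,b))$-orbit of $S$ and test condition~(2) on them; (c) output either the trivial \Z{}-JSJ decomposition (the single vertex $G$) or the HNN-extension of Corollary~\ref{corol:JSJform}.

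Step~(a) is an $O(|R|)$ primitive-root computation on the cyclic word $R$ (a Knuth--Morris--Pratt style failure-function computation), and step~(c) assembles an $O(|R|)$-sized graph of groups in $O(|R|)$ further time once $T_0$ and $n$ are known, so the work is in step~(b). One first applies the classical Whitehead algorithm: $F(a,b)$ has rank $2$, so there are only boundedly many Whitehead automorphisms, each of which can be applied to a word $W$ (followed by cyclic reduction) and have its effect on length measured in $O(|W|)$ time; by peak reduction a strictly length-decreasing move is available whenever the current cyclically reduced word is not of minimal orbit length, so greedy descent reaches a minimal-length representative after at most $|S|\leqslant|R|$ strict decreases, for a total cost of $O(|R|^2)$. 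Given a minimal cyclically reduced word $T$, membership in the rank-two free subgroup $\langle a, b^{-1}ab\rangle=\langle a,y\rangle$ is decided in $O(|R|)$ time --- for instance by checking that the running $b$-exponent sums of $T$ stay within $\{0,-1\}$, which is precisely the condition for $T$ to lie in this subgroup --- and the same pass reads off the unique word $T_0(a,y)$ with $T_0(a,b^{-1}ab)=T$; conjugacy of $T$ to a power of $[a,b]=a^{-1}(b^{-1}ab)$ (for some exponent in the setting of Theorem~\ref{thm:flexibilityHyperbolic}, exponent $\pm1$ in the setting of Theorem~\ref{thm:flexibilityWithTorsion}) is decided in $O(|R|)$ time by cyclically reducing $T$, extracting its cyclic root, and comparing against the finitely many cyclic rotations of $(a^{-1}b^{-1}ab)^{\pm1}$.

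The genuinely delicate point --- the only place the routine bookkeeping above is not enough --- is the existential quantifier in condition~(2): greedy Whitehead descent produces \emph{one} minimal-length representative, but condition~(2) asks whether \emph{some} minimal-length representative has the required shape, and the set of minimal-length representatives of an orbit need not be a single cyclic word (distinct ones are linked by length-preserving Whitehead automorphisms). I expect this to be handled using the special structure of $\aut(F_2)$-orbits of minimal words in rank $2$, where the length-preserving transitions between minimal representatives are tightly constrained, so that the full set of minimal representatives is enumerated by a bounded-branching search from the representative produced by descent, with only $O(|R|)$-many of them and total cost staying within the $O(|R|^2)$ budget; the classification results in the body (the analogues of Theorems~\ref{thm:KWquote} and~\ref{thm:JSJclassificationONEREL}) then guarantee that testing condition~(2) on this finite list correctly detects the \Z{}-JSJ decomposition. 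Making this enumeration and its complexity bound precise --- so that ``compute all shortest representatives and test each'' runs in quadratic time --- is where the real work lies; everything else is routine on top of Theorems~\ref{thm:flexibilityHyperbolic}, \ref{thm:flexibilityWithTorsion} and Corollary~\ref{corol:JSJform}.
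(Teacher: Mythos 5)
Your algorithm is structurally the same as the paper's: reduce to condition (2) of Theorems \ref{thm:flexibilityHyperbolic}/\ref{thm:flexibilityWithTorsion}, run Whitehead minimisation on the root $S$ of $R=S^n$, test membership in $\langle a, b^{-1}ab\rangle$ (your Stallings-graph/$b$-exponent-prefix test is correct and linear), and read off the decomposition via Corollary \ref{corol:JSJform}. However, the step you yourself flag as ``where the real work lies'' is a genuine gap, and it is exactly the point on which the whole quadratic bound rests: greedy peak-reduction descent only produces \emph{one} shortest representative, while condition (2) quantifies existentially over \emph{all} shortest representatives of the orbit, so you must enumerate the full set $\mathcal{O}_S$ of minimal-length elements and you must know both that this enumeration fits in $O(|R|^2)$ steps and that $|\mathcal{O}_S|$ is small enough (linear in $|S|$) for the subsequent membership tests to stay within budget. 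Your proposal offers only the expectation that length-preserving Whitehead moves in rank $2$ are ``tightly constrained'' with ``$O(|R|)$-many'' minimal representatives, with no argument; without that, the claimed $O(|R|^2)$ bound is unproven, and indeed in higher rank the analogous set can be large, so something specific to $F(a,b)$ is genuinely needed.

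The paper closes this gap not by a new argument but by citing Khan's analysis of Whitehead's algorithm in the rank-two free group \cite{Khan2004whitehead}: one can compute the entire set $\mathcal{O}_S$ in $O(|S|^2)$ steps, and $|\mathcal{O}_S|$ is linearly bounded, after which intersecting with $\langle a, b^{-1}ab\rangle$ and rewriting a chosen $T$ as $T_0(a,b^{-1}ab)$ takes $O(|R|)$ steps. So your plan is rescued by quoting that result rather than by the bounded-branching search you sketch; as written, the complexity claim is not established. (Two smaller remarks: your explicit $O(|R|)$ check that $T$ is not conjugate to a power of $[a,b]$ is a sensible addition -- the paper's proof omits it and relies on the hypotheses of the theorems -- and note that the paper's Proposition \ref{prop:whitehead} is what guarantees that a representative in $\langle a, b^{-1}ab\rangle$ can be found among the \emph{shortest} elements, so testing only minimal representatives is indeed sufficient.)
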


In the case of Theorem \ref{thm:flexibilityHyperbolic}, the polytope allows us to detect a non-trivial \ZC{}-JSJ decomposition in linear time (essentially, the algorithm is to draw the Friedl--Tillmann polytope).
\begin{corolletter}[Corollary \ref{corol:JSJdetectBODYVERSION}]
\label{corol:JSJdetect}
There exists an algorithm with input a presentation $\mathcal{P}=\langle a, b\mid R\rangle$ of a group $G$ from Theorem \ref{thm:flexibilityHyperbolic}, and with output {\ttfamily\upshape yes} if the group $G$ has non-trivial \ZC{}-JSJ decomposition and {\ttfamily\upshape no} otherwise.

This algorithm terminates in $O(|R|)$-steps.
\end{corolletter}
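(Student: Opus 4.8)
The plan is to reduce the statement to the linear-time detectability of property~(\ref{flexibilityHyperbolic:3}) of Theorem~\ref{thm:flexibilityHyperbolic}, namely that the Friedl--Tillmann polytope of $\mathcal{P}$ is a straight line segment but not a single point. Since Theorem~\ref{thm:flexibilityHyperbolic} already gives the equivalence of this with non-triviality of the \Z{}-JSJ decomposition, it suffices to exhibit an algorithm which, given the word $R$, decides this polytope property in $O(|R|)$ steps. First I would recall (from the construction reviewed in Figure~\ref{fig:PolytopeExample}) that the Friedl--Tillmann polytope is obtained by tracing the word $R$ as a lattice path $\gamma$ in the plane $\mathbb{Z}^2 = H_1(G;\mathbb{Z})/\text{torsion}$, taking the convex hull $P'$ of $\gamma$, and then perturbing the vertices of $P'$ by a bounded local rule (bottom-left corners of the unit squares of $\gamma$ meeting those vertices). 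The key observation is that every one of these operations is computable in linear time in $|R|$.

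The main steps are as follows. \textbf{Step 1: tracing the path.} Read $R$ letter by letter, maintaining a running coordinate in $\mathbb{Z}^2$; the $i$-th prefix of $R$ gives the $i$-th lattice point of $\gamma$. This is $O(|R|)$, and the coordinates are bounded in absolute value by $|R|$, so each arithmetic step is on numbers of bit-length $O(\log |R|)$. \textbf{Step 2: convex hull.} Computing the convex hull $P'$ of the at most $|R|+1$ traced points takes $O(|R|\log|R|)$ by a general-purpose algorithm, but in fact for a lattice polygon whose vertices have coordinates bounded by $|R|$ one can do better; more simply, since we only need to know whether $P'$ is a segment (equivalently, whether all traced points are collinear), this check is a single linear scan: fix the first two distinct points, and verify every subsequent point lies on the line through them, which is $O(|R|)$. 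If the points are not collinear, $P'$ is two-dimensional and hence so is $P$ (perturbation by bounded vectors cannot collapse a two-dimensional polytope in our setting, as the vertices of $P'$ are far apart relative to the perturbation whenever $R$ is long enough, and the finitely many short cases are handled by table lookup), so we output {\ttfamily\upshape no}. \textbf{Step 3: distinguishing a segment from a point.} If all traced points are collinear, $P$ is contained in a line; then $P$ is a single point if and only if the two extreme perturbed vertices coincide. Since $R \in F(a,b)'$, the path $\gamma$ is a closed loop, and being collinear forces it to be ``one-dimensional'' in the appropriate sense; the two extreme vertices of $P$ are computed from the two extreme lattice points of $\gamma$ along the line by the local perturbation rule, in $O(1)$ once those extreme points are known (which took $O(|R|)$). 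Compare them: if equal, output {\ttfamily\upshape no}; if distinct, output {\ttfamily\upshape yes}. By Theorem~\ref{thm:flexibilityHyperbolic}, this answer is correct.

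The step I expect to require the most care is \textbf{Step 2}, specifically justifying that one never needs the full convex hull but only the collinearity test, and that the local perturbation cannot turn a genuinely two-dimensional $P'$ into a degenerate $P$. This is really a finiteness argument: either $|R|$ is below some explicit constant $N$, in which case the whole polytope can be precomputed and stored, or $|R| > N$ and the diameter of $P'$ dominates the perturbation bound (each vertex moves by a vector of $\ell^\infty$-norm at most $1$), so the dimension of $P$ equals that of $P'$. Assembling these three steps gives a procedure running in $O(|R|)$ total, since Steps 1 and 3 are clearly linear and Step 2 has been replaced by a linear collinearity scan together with $O(|R|)$ work to locate the extreme points; this completes the proof.
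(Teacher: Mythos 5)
Your overall architecture is the same as the paper's: the published proof simply says that one can draw the Friedl--Tillmann polytope in $O(|R|)$ steps, decide in $O(|R|)$ steps whether it is a straight line but not a point, and then invoke Theorem \ref{thm:flexibilityHyperbolicBODYVERSION}. However, your implementation of the polytope test contains a genuine error. The Friedl--Tillmann polytope $P$ is not a bounded perturbation of the convex hull $P'$ of the traced loop $\gamma$: the relation, used in Lemma \ref{lem:StraightLinePolytopes}, is that $P'$ is the \emph{Minkowski sum} of $P$ with a unit square, i.e.\ $P' = P + [0,1]^2$ up to translation. Consequently ``$P$ is a segment'' corresponds to ``$P'$ is a unit square thickened along a segment'', not to ``the traced points are collinear''. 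In fact, since $R\in F(a,b)'\setminus\{1\}$, both exponent sums vanish and $R$ necessarily contains both $a^{\pm1}$ and $b^{\pm1}$ letters, so the traced lattice points are \emph{never} collinear; your Step 2 therefore outputs {\ttfamily\upshape no} on every input. A concrete counterexample is $R=(a^{-2}b^{-1}a^{2}b)^n$ from Example \ref{ex:JSJexample}: the hull of $\gamma$ is a $2\times 1$ rectangle (points not collinear), the Friedl--Tillmann polytope is a segment of length one, and $G$ has non-trivial \Z{}-JSJ decomposition, yet your algorithm answers {\ttfamily\upshape no}.

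For the same reason, your auxiliary claim that the ``local perturbation cannot collapse a two-dimensional $P'$ once $|R|$ is large'' is false: passing from $P'$ to $P$ shrinks the extent of the polytope by $1$ in every lattice direction, so the unit square collapses to a point and a $1\times n$ rectangle collapses to a segment no matter how large $n$ (hence $|R|$) is; no table lookup for short $R$ repairs this. The correct linear-time test is to compute the convex hull of the lattice path $\gamma$ (this can be done in $O(|R|)$, e.g.\ by a linear-time hull algorithm for polygonal chains, the coordinates being bounded by $|R|$) and then decide whether this hull is of the form $(\text{segment of positive length}) + [0,1]^2$, equivalently whether the Minkowski difference of the hull by a unit square is one-dimensional but not a point; note that the segment need not be axis-parallel, so a width-one-strip test in the coordinate directions alone also does not suffice. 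With that replacement, Steps 1 and 3 of your outline and the appeal to Theorem \ref{thm:flexibilityHyperbolic} go through as in the paper.
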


Detecting non-trivial \ZC{}-JSJ decompositions is useful in its own right. We demonstrate this with the following application of Corollaries \ref{corol:JSJcomp} and \ref{corol:JSJdetect} to hyperbolic groups.

\begin{corolletter}[Corollary \ref{corol:OutdetectBODYVERSION}]
\label{corol:Outdetect}
There exists an algorithm with input a presentation $\mathcal{P}=\langle a, b\mid R\rangle$ of a hyperbolic group $G$ from Theorem \ref{thm:flexibilityHyperbolic} or \ref{thm:flexibilityWithTorsion} that determines which one of the following three possibilities holds: the outer automorphism group of $G$ is finite, is virtually $\mathbb{Z}$, or is isomorphic to $\operatorname{GL}_2(\mathbb{Z})$.

If $G$ is as in Theorem \ref{thm:flexibilityHyperbolic}, this algorithm terminates in $O(|R|)$-steps. Else, it terminates in $O(|R|^2)$-steps.
\end{corolletter}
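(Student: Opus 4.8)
The plan is to reduce the decision problem about $\out(G)$ to the computations already provided by Corollaries \ref{corol:JSJcomp} and \ref{corol:JSJdetect}, together with the explicit structural description of the \Z{}-JSJ decomposition from Corollary \ref{corol:JSJform}. The conceptual input is that for a one-ended hyperbolic group (and, in the torsion-free case, an RG group), the \Z{}-JSJ decomposition coarsely governs $\out(G)$ via the work of Sela and Levitt \cite{Sela1997Structure} \cite{levitt2005automorphisms}: when the \Z{}-JSJ decomposition is trivial, $\out(G)$ is finite; when it is non-trivial, the graph of groups has a single rigid vertex and a single loop edge (Corollary \ref{corol:JSJform}), so the Levitt exact sequence expresses $\out(G)$ in terms of a finite piece (the modular group / twists) and the action on the one-edged graph, forcing $\out(G)$ to be virtually-$\mathbb{Z}$ — unless $G$ itself is virtually a surface group or has abelianisation allowing the full $\operatorname{GL}_2(\mathbb{Z})$, which happens precisely in the degenerate case where $T$ is conjugate to $[a,b]^{\pm 1}$, i.e.\ $G$ is (virtually) the fundamental group of a once-punctured torus or closed surface.

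Concretely, I would proceed as follows. First, run the $O(|R|^2)$ algorithm of Corollary \ref{corol:JSJcomp} to compute a shortest element $T$ in the $\aut(F(a,b))$-orbit of $R$ (respectively of the root $S$), and simultaneously decide condition (\ref{flexibilityHyperbolic:2}) of Theorem \ref{thm:flexibilityHyperbolic} or \ref{thm:flexibilityWithTorsion}; in the setting of Theorem \ref{thm:flexibilityHyperbolic} one may instead draw the Friedl--Tillmann polytope in $O(|R|)$ time (Corollary \ref{corol:JSJdetect}) and read off whether it is a point, a non-degenerate segment, or higher-dimensional. Second, split into three cases according to the output. Case (a): the polytope is higher-dimensional (equivalently $T\notin\langle a, b^{-1}ab\rangle$ up to the relevant orbit condition), so the \Z{}-JSJ decomposition is trivial; here $\out(G)$ is finite, \emph{except} one must check the special sub-case where $G$ is a closed-surface group, detected by $T$ (or $R/S$) being conjugate to $[a,b]^{\pm1}$ with the polytope a point — then $\out(G)\cong\operatorname{GL}_2(\mathbb{Z})$ (via the mapping class group of the torus / the outer automorphisms of the closed orientable genus-one group, using that $H_1(G)\cong\mathbb{Z}^2$). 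Case (b): the polytope is a point but $G$ is not a closed-surface group — this forces $G$ to be a free product or otherwise excluded by the standing hypotheses, so it does not arise, or else $\out(G)$ is $\operatorname{GL}_2(\mathbb Z)$ as above. Case (c): the polytope is a non-degenerate segment, equivalently condition (2) holds; then by Corollary \ref{corol:JSJform} the \Z{}-JSJ decomposition is a single rigid vertex with one loop edge, the modular group is trivial (rigid vertex, virtually-$\mathbb{Z}$ edge group), and Levitt's exact sequence gives $1\to (\text{finite})\to \out(G)\to (\text{subgroup of }\operatorname{GL}_1(\mathbb{Z})\ltimes\mathbb{Z})$, so $\out(G)$ is virtually-$\mathbb{Z}$ (and infinite, since the Dehn twist along the loop edge has infinite order). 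Output ``finite'', ``$\operatorname{GL}_2(\mathbb{Z})$'', or ``virtually-$\mathbb{Z}$'' accordingly; the $\operatorname{GL}_2(\mathbb{Z})$ test is a bounded-length conjugacy check costing $O(|R|)$, so the total running time is dominated by Corollary \ref{corol:JSJcomp}, giving $O(|R|^2)$ in general and $O(|R|)$ in the setting of Theorem \ref{thm:flexibilityHyperbolic} where the polytope suffices.

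The main obstacle is the case analysis separating ``$\out(G)$ finite'' from ``$\out(G)\cong\operatorname{GL}_2(\mathbb{Z})$'' in the trivial-\Z{}-JSJ case: a priori a one-ended hyperbolic (or RG) group with trivial \Z{}-JSJ decomposition has finite $\out$ by Sela--Levitt rigidity, so the only source of an infinite (indeed $\operatorname{GL}_2(\mathbb{Z})$) outer automorphism group among our groups is that $G$ is \emph{not} one-ended in the relevant sense, or is a closed-surface group which is excluded from the hyperbolic rigidity statement — one must verify that, within the two-generator one-relator world under our hypotheses, ``$R$ (or its root) conjugate to $[a,b]^{\pm1}$'' is exactly the closed-surface case and that these are the only groups on our list with $\out(G)\cong\operatorname{GL}_2(\mathbb{Z})$. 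This amounts to recalling that $\aut$ of the once-punctured torus group $F_2$ surjects $\operatorname{GL}_2(\mathbb{Z})$ with the correct kernel, and that for the closed genus-one surface group $\out\cong\operatorname{GL}_2(\mathbb{Z})$ — both classical — and confirming no flexible-vertex (quadratically-hanging) vertex can occur in Case (c), which is immediate from the ``single rigid vertex'' clause of Corollary \ref{corol:JSJform}. Everything else is bookkeeping on top of the already-established corollaries.
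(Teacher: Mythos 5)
Your proposal follows essentially the same route as the paper's proof: first detect the exceptional case that $R$ (or its root) is conjugate to $[a,b]^{\pm n}$ — equivalently, in the Theorem \ref{thm:flexibilityHyperbolic} setting, that the polytope is a point — and output $\operatorname{GL}_2(\mathbb{Z})$ (the paper cites \cite[Theorem A]{Logan2016Outer} for this); otherwise decide triviality of the \Z{}-JSJ decomposition via Corollary \ref{corol:JSJdetect} (linear, polytope) or Corollary \ref{corol:JSJcomp} (quadratic, Whitehead), and conclude virtually-$\mathbb{Z}$ versus finite from Levitt's description of $\out$ in the presence or absence of an essential $\mathbb{Z}$-splitting, with the same $O(|R|)$/$O(|R|^2)$ bookkeeping. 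The only (cosmetic) inaccuracy is that the exceptional groups here are the Fuchsian orbifold groups $\langle a,b\mid [a,b]^n\rangle$ with $n>1$ rather than closed-surface or punctured-torus groups, but your detection criterion and conclusion for them coincide with the paper's.
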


\p{Relationships between outer automorphism groups}
Write $G_k$ for the group defined by $\langle a, b\mid S^k\rangle$, where $k\geqslant1$ is maximal and $S$ is fixed.
In general there is very little relationship between $\out(G_1)$ and $\out(G_n)$ for $n>1$. For example, if $S=b^{-1}a^2ba^{-4}$ then $G_1=\operatorname{BS}(2, 4)$ has non-finitely generated outer automorphism group \cite{collins1983automorphisms}, while $\out(G_n)$ for $n \geqslant 2$ is virtually-$\mathbb{Z}$ \cite{Logan2016Outer}.
In contrast, Theorems \ref{thm:flexibilityHyperbolic} and \ref{thm:flexibilityWithTorsion} imply that if $S\in F(a, b)'$ and $G_1$ is hyperbolic then the triviality of the \ZC{}-JSJ decomposition of $G_k$ depends solely on the word $S$; the exponent $k$ is irrelevant. We can then apply the relationship between \ZC{}-JSJ decompositions and outer automorphism groups to see that $\out(G_m)$ and $\out(G_n)$ are commensurable for $m, n\geqslant1$. Our next corollary says that this relationship is much stronger.

\begin{corolletter}[Corollary \ref{corol:OutCommensurabilityBODYVERSION}]
\label{corol:OutCommensurability}
Write $G_k$ for the group defined by $\langle a, b\mid S^k\rangle$, where $k\geqslant1$ is maximal. If $S\in F(a, b)' \setminus \{1\}$ and $G_1$ is hyperbolic then:
\begin{enumerate}
\item\label{OutCommensurability:1} $\out(G_m)\cong\out(G_n)$ for all $m, n>1$.
\item\label{OutCommensurability:2} $\out(G_n)$ embeds with finite index in $\out(G_1)$.
\end{enumerate}
\end{corolletter}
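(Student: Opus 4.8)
The plan is to prove both statements simultaneously by exhibiting, for every $k$, a canonical copy of one fixed group $\out(F,S)$ (built from $F:=F(a,b)$) inside $\out(G_k)$, and then pinning down the single way in which $G_1$ can be bigger. First record the standing reductions: since $S\in F'\setminus\{1\}$ the element $S$ is non-primitive, so every $G_k$ is one-ended and hyperbolic (a one-relator group with torsion when $k\geqslant2$); and since $G_1$ is hyperbolic, $S$ does not lie in the $\aut(F)$-orbit of $[a,b]$ (which would make $G_1\cong\mathbb Z^2$), so no $G_k$ is one of the surface-orbifold groups $\langle a,b\mid[a,b]^k\rangle$. Put $\mathcal S:=\{\phi\in\aut(F):\phi(S)\text{ is conjugate to }S^{\pm1}\}$. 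Since roots in free groups are unique up to conjugacy, $\phi(S^k)$ is conjugate to $S^{\pm k}$ exactly when $\phi\in\mathcal S$; and by Magnus's theorem that, in a free group, $\langle\langle u\rangle\rangle=\langle\langle v\rangle\rangle$ if and only if $u$ is conjugate to $v^{\pm1}$, the subgroup $\mathcal S$ is precisely the stabiliser in $\aut(F)$ of the normal closure $\langle\langle S^k\rangle\rangle$ --- for every $k$. Thus $\mathcal S$ acts on each $G_k=F/\langle\langle S^k\rangle\rangle$, yielding homomorphisms $\Phi_k\colon\mathcal S\to\aut(G_k)$ and, after projecting, $\overline\Phi_k\colon\mathcal S\to\out(G_k)$; write $\aut_{\mathrm{tame}}(G_k)$ and $\out_{\mathrm{tame}}(G_k)$ for their images.

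The key point is that $\ker\overline\Phi_k=\inn(F)$ for every $k$. If $\phi\in\aut(F)$ acts trivially on $G_k$, then it acts trivially on $F/F'=\mathbb Z^2$ (because $\langle\langle S^k\rangle\rangle\leqslant F'$), hence is inner --- say $\phi=\gamma_g$, conjugation by $g\in F$ --- by Nielsen's classical computation that the kernel of $\aut(F)\to\operatorname{GL}_2(\mathbb Z)$ is $\inn(F)$; and triviality of $\gamma_g$ on $G_k$ forces the image of $g$ into the centre of the non-elementary hyperbolic group $G_k$, which is trivial, so $g\in\langle\langle S^k\rangle\rangle$. Thus $\ker\Phi_k=\{\gamma_g:g\in\langle\langle S^k\rangle\rangle\}\leqslant\inn(F)$. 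Since $\inn(G_k)$ is exactly the image of $\inn(F)$ under $\Phi_k$, it follows that $\overline\Phi_k$ induces, for every $k$, an injection $\out(F,S):=\mathcal S/\inn(F)\hookrightarrow\out(G_k)$ with image $\out_{\mathrm{tame}}(G_k)$.

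It remains to compare $\out_{\mathrm{tame}}(G_k)$ with $\out(G_k)$. For $k\geqslant2$ I would show $\Phi_k$ is surjective, i.e.\ that $\aut(G_k)$ is \emph{tame}: every automorphism of $G_k$ lifts to $\aut(F)$. Granting this, $\out(G_k)=\out_{\mathrm{tame}}(G_k)\cong\out(F,S)$ for all $k>1$, which is part~\ref{OutCommensurability:1}. Moreover, for $n\geqslant2$ every automorphism of $G_n$ then lifts to some $\phi\in\mathcal S$, and $\phi$ preserves $\langle\langle S\rangle\rangle$ so descends to $\aut(G_1)$; this produces a homomorphism $\aut(G_n)\to\aut(G_1)$ inducing an embedding $\out(G_n)\cong\out(F,S)\hookrightarrow\out(G_1)$ with image $\out_{\mathrm{tame}}(G_1)$. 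Hence part~\ref{OutCommensurability:2} reduces to showing $\out_{\mathrm{tame}}(G_1)$ has finite index in $\out(G_1)$: if the \Z{}-JSJ of $G_1$ is trivial then $\out(G_1)$ is finite (by Corollary~\ref{corol:Outdetect}, the $\operatorname{GL}_2(\mathbb Z)$ alternative being the excluded surface-orbifold case), so there is nothing to do; if it is non-trivial then $\out(G_1)$ is virtually $\mathbb Z$ (again by Corollary~\ref{corol:Outdetect}), while $\out_{\mathrm{tame}}(G_1)$ contains the class of the automorphism $a\mapsto a$, $b\mapsto ab$ --- which fixes $b^{-1}ab$, hence lies in $\mathcal S$ --- and this class has infinite order in $\out(G_1)$ (being the twist along the loop edge of the \Z{}-JSJ), so the index is finite.

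The one substantial thing to prove is the tameness of $\aut(G_k)$ for $k\geqslant2$, and I expect that to be the main obstacle. I would approach it through Corollary~\ref{corol:JSJform}: when the \Z{}-JSJ is non-trivial, $G_k$ is the HNN extension with rigid vertex group $H_k=\langle a,y\mid T_0^k(a,y)\rangle$, stable letter $b$, and attaching map $y=b^{-1}ab$, where $T_0$ is the word $T$ of Theorems~\ref{thm:flexibilityHyperbolic}/\ref{thm:flexibilityWithTorsion} --- so it depends only on $S$ --- and $|T_0|<|S|$. An automorphism of $G_k$ must preserve this decomposition up to the moves allowed by Theorems~\ref{thm:KWquote} and~\ref{thm:JSJclassificationONEREL}; the twist subgroup is generated by the tame automorphism $a\mapsto a$, $b\mapsto ab$ identified above; and the finitely many automorphisms of $H_k$ relative to its peripheral subgroups $\langle a\rangle$, $\langle y\rangle$ should be tame by induction on $|S|$, since $H_k$ again satisfies the hypotheses of Theorem~\ref{thm:flexibilityWithTorsion} with root $T_0$ of strictly smaller length. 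The delicate steps are making this induction precise (in particular carrying the peripheral structure correctly through the reductions of Corollary~\ref{corol:JSJform}) and the base case, where one needs a direct description of the (finite) automorphism group of a one-ended one-relator group with torsion whose relator root admits no essential $\mathbb Z$-splitting. An alternative route to part~\ref{OutCommensurability:1} that sidesteps tameness would instead compute $\out(G_k)$ directly from the graph of groups of Corollary~\ref{corol:JSJform}, as an extension of a finite group by the twist $\mathbb Z$, and check that the extension data is independent of $k$ for $k\geqslant2$ --- but this meets the same difficulty, now phrased as an induction on $|S|$ for the rigid vertex.
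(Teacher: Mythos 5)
Your overall architecture is the same as the paper's: lift automorphisms of $G_k$ to automorphisms of $F(a,b)$ preserving $\langle\langle S^k\rangle\rangle$, observe (via Magnus' theorem and uniqueness of roots in free groups) that the relevant subgroup of $\aut(F(a,b))$ depends only on $S$ and not on $k$, deduce (\ref{OutCommensurability:1}), and obtain the finite-index claim in (\ref{OutCommensurability:2}) from the fact that triviality of the \Z{}-JSJ decomposition is independent of $k$ together with the finite versus virtually-$\mathbb{Z}$ dichotomy for $\out$. Your kernel computation (Nielsen's theorem that $\aut(F(a,b))\to\operatorname{GL}_2(\mathbb{Z})$ has kernel $\inn(F(a,b))$, plus triviality of the centre of a non-elementary hyperbolic group) is a clean, self-contained substitute for the injectivity inputs the paper cites.

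However, there is a genuine gap, and you have located it yourself: everything rests on the surjectivity of $\Phi_k$ for $k\geqslant2$, i.e.\ the tameness of $\aut(G_k)$, and you do not prove it. This is not a routine step: it is precisely Pride's theorem that every automorphism of a two-generator one-relator group with torsion is induced by an automorphism of $F(a,b)$ \cite{Pride1977}, which the paper invokes together with \cite[Theorem 4.2]{Logan2016Outer} (the latter upgrading the lifting statement to $\theta_k\colon H_k\to\out(G_k)$ being an isomorphism). Your proposed substitute, an induction on $|S|$ through Corollary \ref{corol:JSJformBODYVERSION}, is only a sketch: you would need a structure theorem for automorphisms of the HNN-extension relative to the rigid vertex group and its peripheral subgroups $\langle a\rangle,\langle y\rangle$ (Theorems \ref{thm:KWquote} and \ref{thm:JSJclassificationONEREL} identify the splitting but say nothing about lifting vertex automorphisms to $\aut(F(a,b))$), and the base case --- tameness when the \Z{}-JSJ decomposition is trivial and $\out(G_k)$ is finite --- is not addressed at all; none of the paper's machinery produces it. Without tameness neither (\ref{OutCommensurability:1}) nor the identification of the image of $\out(G_n)$ in $\out(G_1)$ needed for (\ref{OutCommensurability:2}) is established. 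A minor additional wrinkle: in the final step of (\ref{OutCommensurability:2}) you need $S\in\langle a, b^{-1}ab\rangle$ itself (not merely some word in its $\aut(F(a,b))$-orbit) for the twist $a\mapsto a$, $b\mapsto ab$ to lie in $\mathcal{S}$; this is fine after replacing $S$ by the word $T$ supplied by Theorem \ref{thm:flexibilityHyperbolicBODYVERSION}, which changes $\mathcal{S}$ only by conjugation in $\aut(F(a,b))$, but it should be said.
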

The isomorphism of (\ref{OutCommensurability:1}) is essentially already known, and holds under the more general restriction that $S$ is non-primitive \cite{Logan2016Outer}; we include it for completeness. All the maps here are extremely natural, and in particular the embedding $\out(G_n)\hookrightarrow\out(G_1)$ is induced by the natural map $G_n\rightarrow G_1$ with kernel normally generated by $S$.

\noindent{\newline\bfseries What about JSJ decompositions?}
Most of our main results can be stated in terms of JSJ decompositions, but we state them in terms of \ZC{}-JSJ decompositions as this gives a smoother exposition. Nothing is lost by doing this as \ZC{}-JSJ decompositions encode all the information we are interested in and motivated by (outer automorphism groups, isomorphism problem), and in fact we gain Corollary \ref{corol:JSJcomp}, which we are unable to rephrase in terms of JSJ decompositions.
Additionally, when we rephrase Corollary \ref{corol:JSJform} in these terms then the phrase ``$T_0(a, b^{-1}ab)$ is the word $T$ from the theorem'' is replaced with ``$T_0(a, b^{-1}ab)$ is some word'', so we lose the explicit connection with the theorems.

\p{Outline of the paper}
In Section \ref{sec:background} we build a theory of \ZC{}-JSJ decompositions applicable to the non-hyperbolic groups in Theorem \ref{thm:flexibilityHyperbolic}, and we prove a useful theorem which allows us to spot \ZC{}-JSJ decompositions of these groups (Theorem \ref{thm:KWquote}).
In Section \ref{sec:withTorsion} we prove Theorem \ref{thm:flexibilityWithTorsion}.
In Section \ref{sec:torsionFree} we prove our main technical result involving polytopes, Theorem \ref{thm:JSJFormTF}, which takes as input a pair of compatible presentations of some group $G$, one having the form $\langle a, b\mid R\rangle$ with $R\in F(a, b)'$ and the other the form $\langle a, b\mid\mathbf{s}\rangle$ with $\mathbf{s}\subset\langle a, b^{-1}ab\rangle$, and proves that the Friedl--Tillmann polytope of $G$ is a straight line.
In Section \ref{sec:RGTheorem} we prove Theorem \ref{thm:flexibilityHyperbolic}.
In Section \ref{sec:mainProofs} we prove Corollaries \ref{corol:JSJform}--\ref{corol:OutCommensurability}.

\p{Acknowledgements}
We are very grateful to Nicholas Touikan for several helpful comments and suggestions.
We thank the referee for their helpful comments.
This work has received funding from the European Research Council (ERC) under the European Union's Horizon 2020 research and innovation programme, Grant agreement No. 850930, 
from the European Union (ERC, SATURN, 101076148),
from the UK's Engineering and Physical Sciences Research Council (EPSRC), grant EP/R035814/1,
and from the Deutsche Forschungsgemeinschaft (DFG, German Research Foundation) -- Project-ID 427320536 -- SFB 1442, as well as under Germany’s Excellence Strategy EXC 2044--390685587 and EXC-2047/1 -- 390685813.


\section{JSJ-theory and algebraically hyperbolic groups}
\label{sec:background}
JSJ-theory plays a key role in the theory of hyperbolic groups \cite{rips1997cyclic,sela1995isomorphism,levitt2005automorphisms,Sela2009,dahmani2011isomorphism}.
Mirroring the JSJ decomposition of $3$-manifolds, the JSJ and \ZC{}-JSJ decompositions of a one-ended hyperbolic group are graph of groups decompositions where all edge groups are virtually-$\mathbb{Z}$, and are (in an appropriate sense) unique if certain vertices, called ``flexible vertices'', are treated as pieces to be left intact and not decomposed.
\ZC{}-JSJ decompositions were originally called ``essential JSJ decompositions'' by Sela, and they contain all the important information used in the applications of JSJ-theory cited above.
We refer the reader to Guirardel--Levitt's monograph for further background and motivation, in particular their Section 9.5 on \ZC{}-JSJ decompositions \cite{Guirardel2017JSJ}.

In this section we define JSJ decompositions and \ZC{}-JSJ decompositions.
We then build a theory of \ZC{}-JSJ decompositions for certain algebraically hyperbolic (\CSA{}) groups, as defined below, and which is applicable to the BS-free one-relator groups considered in this paper.
Our main result here is Theorem \ref{thm:KWquote}, which can be summarised by: {when a decomposition looks like a \ZC{}-JSJ decomposition, it is indeed a \ZC{}-JSJ decomposition}.

This section deals with graph of groups splittings, so we now fix some conventions and notation (based on Serre's book \cite{trees}). In a graph $\Gamma$, every edge $e$, with initial and terminal vertices $\iota(e)$ and $\tau(e)$, respectively, has an associated reverse edge $\overline{e}$ such that $\iota(\overline{e})=\tau(e)$ and $\tau(\overline{e})=\iota(e)$. We use $\bGamma$ to denote a graph of groups, with a connected underlying graph $\Gamma$, associated \emph{vertex groups} (or \emph{vertex stabilisers}) $\{G_v\mid v\in V({\Gamma})\}$ and \emph{edge groups} (or \emph{edge stabilisers}) $\{G_e\mid e\in E({\Gamma}), G_e=G_{\overline{e}}\}$, and set of monomorphisms $\theta_e: G_e\rightarrow G_{\iota(e)}$.


\p{JSJ decompositions}
We fix a finitely generated group $G$, and a family $\mathcal{A}$ of subgroups of $G$.
(It is usual to assume that $\mathcal{A}$ is closed under conjugating and taking subgroups, but the main class we focus on, \ZC{} subgroups, is not closed under these operations.)

An \emph{$\mathcal{A}$-tree of $G$} is a tree $T$ equipped with an action of $G$, written $G\curvearrowright T$, whose edge stabilisers are in $\mathcal{A}$.
A subgroup $H\leqslant G$ is \emph{elliptic} in $T$ if it fixes a point in $T$ (and hence is contained in a vertex stabiliser of $T$), and \emph{universally $\mathcal{A}$-elliptic} if it is elliptic in every $\mathcal{A}$-tree of $G$.
An $\mathcal{A}$-tree $T$ is \emph{universally $\mathcal{A}$-elliptic} if its edge stabilisers are universally $\mathcal{A}$-elliptic.
An $\mathcal{A}$-tree $T$ \emph{dominates} the $\mathcal{A}$-tree $T'$ if every subgroup of $G$ which is elliptic in $T$ is elliptic in $T'$.
An \emph{$\mathcal{A}$-JSJ tree $T$ of $G$} is an $\mathcal{A}$-tree such that:
\begin{enumerate}
\item $T$ is universally $\mathcal{A}$-elliptic, and
\item $T$ dominates every other universally $\mathcal{A}$-elliptic tree $T'$.
\end{enumerate}
The quotient graph of groups $\bGamma$, with underlying graph $\Gamma =T/G$, is an \emph{$\mathcal{A}$-JSJ decomposition of $G$}.
A vertex of an $\mathcal{A}$-tree $T$ is \emph{elementary} if its group is in $\mathcal{A}$, while a non-elementary vertex is \emph{rigid} if its group is universally $\mathcal{A}$-elliptic, and \emph{flexible} otherwise.
Flexible vertices are the ``pieces to be left intact and not decomposed'' mentioned above, and these play a very minor role in this paper.

We shall principally consider four such families $\mathcal{A}$:
\begin{enumerate}
\item The family of $\mathcal{VC}$ subgroups consists of virtually-$\mathbb{Z}$ subgroups of $G$.
All the trees we consider are $\mathcal{VC}$, so we shall abbreviate ``$\mathcal{VC}$-tree'', ``$\mathcal{VC}$-JSJ tree'' and ``$\mathcal{VC}$-JSJ decomposition'' to simply \emph{tree}, \emph{JSJ tree} and \emph{JSJ decomposition}, respectively.
\item The family of $\mathcal{Z}$ subgroups consists of virtually-$\mathbb{Z}$ subgroups of $G$ with infinite centre.
For all the groups we are dealing with, any $\mathcal{Z}$ subgroup is infinite cyclic \cite{Karrass1971subgroups, Cebotar1971subgroups}, so we often use $\mathbb{Z}$ in place of $\mathcal{Z}$.
\item The family of \ZC{} subgroups consists of maximal virtually-$\mathbb{Z}$ subgroups of $G$ with infinite centre.
For the same reason as above, we often use \Z{} in place of \ZC{}.
\item The family of $\widetilde{\mathbb{Z}}$ subgroups consists of those $\mathbb{Z}$ subgroups $H$ such that if $K$ is abelian with $H\leqslant K\leqslant G$ then $K$ is also infinite cyclic.
\end{enumerate}

The theory of \ZC{}-JSJ decompositions for hyperbolic groups is especially powerful because there are canonical structures: for each group $G$ there is a unique ``deformation space'' of \ZC{}-JSJ trees, and there is a canonical \ZC{}-JSJ tree \cite[Section 9.5]{Guirardel2017JSJ}.
We wish to extend these results to \ZC{}-JSJ trees of the groups in Theorem \ref{thm:flexibilityHyperbolic}.
To facilitate this, we consider algebraically hyperbolic groups.

\subsection{Algebraically hyperbolic groups}
\label{sec:AHgroups}
A torsion-free group is \emph{algebraically hyperbolic}, written \CSA{}, if it is commutative-transitive and contains no subgroups of the form $H\rtimes\mathbb{Z}$ with $H$ non-trivial locally cyclic.
We are interested in algebraically hyperbolic groups because torsion-free one-relator groups are algebraically hyperbolic if and only if they are \BS{}-free \cite[Theorem B]{gardam2021algebraically}, and because we can readily study their \ZC{}-JSJ decompositions, or equivalently (by torsion-free-ness) their \Z{}-JSJ decompositions.

The results in this section and at the start of the next are already known for hyperbolic groups, so the reader who is only interested in hyperbolic groups may skip to Proposition \ref{prop:RGZmaxJSJ} of Section \ref{sec:TwoGenJSJ}.

A group $G$ is called \emph{CSA} (for \emph{conjugately separated abelian}) if every maximal abelian subgroup of $G$ is malnormal.
Algebraically hyperbolic groups are CSA \cite[Theorem D]{gardam2021algebraically}, and so our starting point for the study of their \Z{}-JSJ decompositions is the existing theory of JSJ decompositions of CSA groups \cite[Theorem 9.5]{Guirardel2017JSJ}.
(The notation \CSA{} comes from the fact that a torsion-free group is algebraically hyperbolic if and only if it is a CSA group where every non-trivial abelian subgroup is locally infinite cyclic, i.e.\ CSA and abelian subgroups embed into $(\mathbb{Q}, +)$). 

We have kept this section short, and only prove the few results we need.
In particular, we focus on \CSA{} groups rather than CSA groups, and we only prove that \Z{}-JSJ decompositions exist under certain conditions.
We suspect that this section can be extended into a complete \Z{}-JSJ theory for CSA groups.

\subsubsection{Existence}
We start by proving that certain \CSA{} groups have \Z{}-JSJ decompositions.
To do this we follow the analogous existence proof for hyperbolic groups (see for example \cite[Section 9.5]{Guirardel2017JSJ} or \cite[Sections 4.3 \& 4.4]{dahmani2011isomorphism}), although an extra step is needed to deal with the existence of cyclic subgroups not contained in $\widetilde{\mathbb{Z}}$.

We restrict ourselves to those \CSA{} groups whose JSJ decompositions have no flexible vertices.
This sidesteps subtleties in the proof for hyperbolic groups (see the discussion following Remark 9.29 of \cite{Guirardel2017JSJ}, or \cite[Remark 4.13]{dahmani2011isomorphism}),
but is the only situation used in this paper.

\p{The cyclic collapsed tree of cylinders {\boldmath$T_{\operatorname{JSJ}}$}}
Let $G$ be a finitely generated one-ended \CSA{} group.
Then $G$ admits a canonical JSJ tree $T_{\operatorname{JSJ}}$, namely the \emph{cyclic collapsed tree of cylinders} of Guirardel--Levitt \cite[Theorem 9.5]{Guirardel2017JSJ}.
This is a bipartite tree with vertex set $V_0(T_{\operatorname{JSJ}})\sqcup V_1(T_{\operatorname{JSJ}})$, where $V_0(T_{\operatorname{JSJ}})$ consists of non-elementary vertices and $V_1(T_{\operatorname{JSJ}})$ consists of elementary vertices.
Note that edge groups are subgroups of finite index in the vertex groups of elementary vertices, essentially since non-trivial subgroups of $\mathbb Z$ are of finite index.
The action $G\curvearrowright T_{\operatorname{JSJ}}$ is \emph{minimal}, that is $T_{\operatorname{JSJ}}$ has no proper $G$-invariant sub-tree (this is an ambient assumption in Guirardel--Levitt's monograph \cite[Section 1.2]{Guirardel2017JSJ}).
We adapt this tree to obtain a canonical \Z{}-JSJ tree $T_{\mathbb{Z}_{\max}}$, which also has certain nice properties (e.g.\ invariance under group automorphisms).

There are two obstacles to overcome for $T_{\operatorname{JSJ}}$.
Firstly, elementary vertex groups may be contained in non-cyclic abelian subgroups, and secondly, elementary vertex groups may not be maximal abelian.

\p{The {\boldmath$\widetilde{\mathbb{Z}}^c$}-collapse of {\boldmath$T_{\operatorname{JSJ}}$}}
Let $C$ be a $\mathbb{Z}$ subgroup of a \CSA{} group $G$.
Zorn's lemma guarantees the existence of a maximal abelian subgroup $A$ of $G$ containing $C$.
Let $A'$ be another maximal abelian subgroup of $G$ containing $C$.
Take $a \in A'$.
As $a$ centralises $C$, we have that $C$ is a non-trivial subgroup of $aAa^{-1}$, and so by malnormality of $A$ we have that $a\in A$.
Hence $A' = A$, and we have shown that if $C$ is a $\mathbb{Z}$ subgroup of a \CSA{} group $G$ then there exists a unique maximal abelian subgroup containing $C$; we will denote this subgroup by $\hat{C}$.
Note that $\hat{C}$ is locally cyclic, although may not be cyclic, and that the family $\widetilde{\mathbb{Z}}$ consists of those $\mathbb{Z}$ subgroups $C$ such that $\hat{C}$ is still a $\mathbb{Z}$ subgroup.

We will use $\widetilde{\mathbb{Z}}^c$ to denote the complement of the family $\widetilde{\mathbb{Z}}$ within $\mathbb{Z}$-subgroups, so the family consisting of $\mathbb{Z}$ subgroups $C$ of $G$ such that $\hat{C}$ is not finitely generated.

The \emph{$\widetilde{\mathbb{Z}}^c$-collapse} of $T_{\operatorname{JSJ}}$, written $T_{\widetilde{\mathbb{Z}}}$, is the quotient of $T_{\operatorname{JSJ}}$ by the smallest equivalence relation such that for vertices $v, w$ of $T_{\operatorname{JSJ}}$, $v\sim w$ if $G_v\cap G_w\in\widetilde{\mathbb{Z}}^c$.
Equivalently, the tree $T_{\widetilde{\mathbb{Z}}}$ is the $G$-tree constructed from $T_{\operatorname{JSJ}}$ as follows:
For all elementary vertices $v$ of $T_{\operatorname{JSJ}}$ such that $\hat{G}_v$ is non-cyclic, collapse the subgraph of $T_{\operatorname{JSJ}}$ consisting of $v$ and all adjacent edges and their endpoints
(i.e.\ the \emph{star} of $v$)
to a single point.
As orbits of vertices have conjugate vertex groups we have $\hat{G}_v\cong\hat{G}_{g\cdot v}$ for all $g\in G$, so if $v$ is collapsed in this way then so is every vertex in its orbit, and so $T_{\widetilde{\mathbb{Z}}}$ is a $G$-tree, and hence a $\widetilde{\mathbb{Z}}$-tree.
We also see that $T_{\widetilde{\mathbb{Z}}}$ inherits the bipartite structure from $T_{\operatorname{JSJ}}$.
Since the action $G\curvearrowright T_{\operatorname{JSJ}}$ is minimal, the action $G\curvearrowright T_{\widetilde{\mathbb{Z}}}$ is also minimal, and the first description of $T_{\widetilde{\mathbb{Z}}}$ gives us that, because $T_{\operatorname{JSJ}}$ is canonical, the tree $T_{\widetilde{\mathbb{Z}}}$ is canonical.

\begin{lemma}
\label{lem:LCcollapse}
Let $G$ be a finitely generated one-ended \CSA{} group whose JSJ-tree has no flexible vertices.
Then the $\widetilde{\mathbb{Z}}^c$-collapse $T_{\widetilde{\mathbb{Z}}}$ of the cyclic collapsed tree of cylinders $T_{\operatorname{JSJ}}$ of $G$ is a canonical $\widetilde{\mathbb{Z}}$-JSJ tree of $G$, and has no flexible vertices.
\end{lemma}

\begin{proof}
We first prove that $T_{\widetilde{\mathbb{Z}}}$ is universally $\widetilde{\mathbb{Z}}$-elliptic.
The collapsing used to get from $T_{\operatorname{JSJ}}$ to $T_{\widetilde{\mathbb{Z}}}$ reduces the set of edge stabilisers, but does not add any new edge stabilisers, and so edge stabilisers of $T_{\widetilde{\mathbb{Z}}}$ are edge stabilisers of $T_{\operatorname{JSJ}}$.
As $T_{\operatorname{JSJ}}$ is universally $\mathbb{Z}$-elliptic, its edge stabilisers are universally $\mathbb{Z}$-elliptic, and hence the edge stabilisers of $T_{\widetilde{\mathbb{Z}}}$ are also universally $\mathbb{Z}$-elliptic.
As every $\widetilde{\mathbb{Z}}$-tree is a $\mathbb{Z}$-tree, and as $T_{\widetilde{\mathbb{Z}}}$ is a $\widetilde{\mathbb{Z}}$-tree, it follows that the edge stabilisers of $T_{\widetilde{\mathbb{Z}}}$ are universally $\widetilde{\mathbb{Z}}$-elliptic, as claimed.

We now prove that vertex groups of non-elementary vertices of $T_{\widetilde{\mathbb{Z}}}$ are universally $\widetilde{\mathbb{Z}}$-elliptic, i.e.\ non-elementary vertices are rigid.
So consider a $\widetilde{\mathbb{Z}}$-tree $S$, and let $v$ be a non-elementary vertex of $T_{\widetilde{\mathbb{Z}}}$.
As $T_{\widetilde{\mathbb{Z}}}$ is constructed from $T_{\operatorname{JSJ}}$ by collapsing stars of $\widetilde{\mathbb{Z}}^c$ elementary vertices to single points, the subgroup $G_v$ acts on $T_{\operatorname{JSJ}}$ so that there is a minimal $G_v$-invariant subtree $T_{\operatorname{JSJ}}^{(v)}$ with vertex
(resp.\ edge)
stabilisers which are themselves vertex
(resp.\ edge)
stabilisers in the action on $G$ on $T_{\operatorname{JSJ}}$ (as opposed to subgroups of vertex, resp.\ edge, stabilisers), and with an inherited bipartite structure of elementary and non-elementary vertices.
Therefore, all elementary vertex stabilisers in the action $G_v\curvearrowright T_{\operatorname{JSJ}}^{(v)}$ are in $\widetilde{\mathbb{Z}}^c$, and all non-elementary vertex stabilisers are universally elliptic (as they are rigid in the action of $G$ on the JSJ tree $T_{\operatorname{JSJ}}$).
Suppose $G_{v_i}$ is a non-elementary vertex group of the action $G_v\curvearrowright T_{\operatorname{JSJ}}^{(v)}$.
Then $G_{v_i}$ is universally elliptic, and hence universally $\widetilde{\mathbb{Z}}$-elliptic, and so elliptic in the $\widetilde{\mathbb{Z}}$-tree $S$.
Suppose $G_{v_i}$ is an elementary vertex group of the action $G_v\curvearrowright T_{\operatorname{JSJ}}^{(v)}$.
Now let $e_j$ be an edge with $\iota(e_j)=v_i$.
As $T_{\operatorname{JSJ}}$ is universally elliptic, and as $G_{e_j}$ is an edge stabiliser of $T_{\operatorname{JSJ}}$, we have that $G_{e_j}$ is elliptic in $S$, i.e.\ fixes a point of $S$.
As $G_{v_i}$ is cyclic with generator $x$ such that $x^n$ fixes a point of $S$ for some $n\geqslant1$ (specifically, $G_{e_j}=\langle x^n\rangle$), we have that $G_{v_i}$ is elliptic in $S$ \cite[Proposition 25]{trees}.
As $S$ is a $\widetilde{\mathbb{Z}}$-tree but $G_{v_i}$ is not a $\widetilde{\mathbb{Z}}$ subgroup, $G_{v_i}$ in fact fixes a unique vertex of $S$.
Therefore, every vertex group $G_{v_i}$ in the action $G_v\curvearrowright T_{\operatorname{JSJ}}^{(v)}$ fixes a vertex of $S$.
On the other hand, none of these groups $G_{v_i}$ are in $\widetilde{\mathbb{Z}}$, but $S$ is a $\widetilde{\mathbb{Z}}$-tree, so no vertex group $G_{v_i}$ fixes an edge of $S$.
Now suppose $G_{v_i}$ and $G_{v_j}$ are adjacent vertices in the graph of groups decomposition of $G_v$ corresponding to its action on $T_{\operatorname{JSJ}}^{(v)}$.
Then $G_{v_i}$ and $G_{v_j}$ fix vertices $w_i, w_j$ of $S$ respectively, and the intersection $G_{v_i}\cap G_{v_j}$ fixes the geodesic $[w_i, w_j]$.
However, as one of $G_{v_i}$ or $G_{v_j}$ is in $\widetilde{\mathbb{Z}}^c$, we have that $G_{v_i}\cap G_{v_j}$ is in $\widetilde{\mathbb{Z}}^c$, and hence is not an edge stabiliser of the $\widetilde{\mathbb{Z}}$-tree $S$.
It follows that $w_i=w_j$, and inductively we see that each $G_{v_i}$ fixes the same point of $S$, and hence $G_v$ is elliptic in $S$ as required.

Finally, we prove that $T_{\widetilde{\mathbb{Z}}}$ dominates every other universally $\widetilde{\mathbb{Z}}$-elliptic tree $S$.
The collapsing used to get from $T_{\operatorname{JSJ}}$ to $T_{\widetilde{\mathbb{Z}}}$ reduces the set of elementary vertex groups, but does not add any new elementary vertex groups, and so elementary vertex groups of $T_{\widetilde{\mathbb{Z}}}$ are elementary vertex groups of $T_{\operatorname{JSJ}}$.
As $S$ is a $\widetilde{\mathbb{Z}}$-tree, it is a $\mathbb{Z}$-tree, and so is dominated by $T_{\operatorname{JSJ}}$.
Therefore, the elementary vertex groups of $T_{\operatorname{JSJ}}$ are elliptic in $S$, and hence the elementary vertex groups of $T_{\widetilde{\mathbb{Z}}}$ are also elliptic in $S$.
For non-elementary vertices, note that their stabilisers are rigid, and so are elliptic in every $\widetilde{\mathbb{Z}}$-tree, and hence are elliptic in $S$.
It follows that $T_{\widetilde{\mathbb{Z}}}$ dominates $S$, as required.
\end{proof}

\p{The {\boldmath\Z{}}-fold of {\boldmath$T_{\widetilde{\mathbb{Z}}}$}}
We now define the \emph{\Z{}-fold} of $T_{\widetilde{\mathbb{Z}}}$, written $T_{\mathbb{Z}_{\max}}$, which is the tree we require for the rest of this paper; under our assumptions, it is a canonical $\mathbb{Z}_{\max}$-JSJ tree and has no flexible vertices.

Given two distinct edges $e_1, e_2$ of a tree $S$ such that $\iota(e_1)=\iota(e_2)$, we can \emph{fold} these edges together by identifying $\tau(e_1)$ with $\tau(e_2)$, $e_1$ with $e_2$, and the reverse edge $\overline{e}_1$ with $\overline{e}_2$, and we obtain a new tree $S/[e_1=e_2]$.
If $G$ acts on $S$ and $\mathcal{E}\subset E(S)$ is some set of edges of $S$ with the same initial vertex then an \emph{orbit-fold} of the edges $\mathcal{E}$ is the tree $S/[G\curvearrowright \mathcal{E}]$ obtained by folding together all the edges in $g\cdot \mathcal{E}$ for every $g\in G$. This ensures that $G$ acts on the new tree $S/[G\curvearrowright \mathcal{E}]$.

Let $T_{\widetilde{\mathbb{Z}}}'$ be the quotient of $T_{\widetilde{\mathbb{Z}}}$ by the smallest equivalence relation such that, for all edges $e$ of $T_{\widetilde{\mathbb{Z}}}$ and all $h\in\hat{G}_e$, we have $h\cdot e\sim e$.
We shall use a constructive definition of $T_{\widetilde{\mathbb{Z}}}'$, which is equivalent \cite[Proof of Lemma 9.27]{Guirardel2017JSJ} (see also \cite[Section 4.3]{dahmani2011isomorphism}).
The tree $T_{\widetilde{\mathbb{Z}}}'$ is the $G$-tree constructed from $T_{\widetilde{\mathbb{Z}}}$ by iteratively orbit-folding together edges as follows:
Let $e$ be an edge such that $G_e\neq\hat{G}_e$. If one of the endpoints of $e$ is fixed by $\hat{G}_e$,
then orbit-fold together the set of edges in the $\hat{G}_e$-orbit of $e$.
If not, then as $\hat{G}_e$ is cyclic with generator $x$, say, such that $x^n$ fixes an edge for some $n\geqslant1$ (specifically, $G_e=\langle x^n\rangle$), we have that the subtree $\operatorname{Fix}(\hat{G}_e)$ fixed by $\hat{G}_e$ is non-empty \cite[Proposition 25]{trees}.
Let $e'$ be the first edge in the shortest path joining $\operatorname{Fix}(\hat{G}_e)$ to $e$, so $\iota(e')\in\operatorname{Fix}(\hat{G}_e)$ but $\tau(e')\not\in\operatorname{Fix}(\hat{G}_e)$.
Then orbit-fold together the set of edges in the $\hat{G}_{e'}$-orbit of $e'$.
The first description of $T_{\widetilde{\mathbb{Z}}}'$ gives us that, because $T_{\widetilde{\mathbb{Z}}}$ is canonical, the tree $T_{\widetilde{\mathbb{Z}}}'$ is canonical.
The second description of $T_{\widetilde{\mathbb{Z}}}'$ gives us that the edge stabilisers of $T_{\widetilde{\mathbb{Z}}}'$ are contained in the set $\{\hat{G}_e\mid e\in E(T_{\widetilde{\mathbb{Z}}})\}$, i.e.\ are \Z{}-subgroups which contain an edge stabiliser of $T_{\widetilde{\mathbb{Z}}}$, and so $T_{\widetilde{\mathbb{Z}}}'$ is a \Z{}-tree.
The \emph{\Z{}-fold} of $T_{\widetilde{\mathbb{Z}}}$, written $T_{\mathbb{Z}_{\max}}$, is the minimal $G$-invariant subtree of $T_{\widetilde{\mathbb{Z}}}'$; this tree is again both canonical and a \Z{}-tree.

\begin{lemma}
\label{lem:CSAJSJtree}
Let $G$ be a finitely generated one-ended \CSA{} group whose JSJ trees have no flexible vertices.
Then
the {\Z{}}-fold $T_{\mathbb{Z}_{\max}}$ of {$T_{\widetilde{\mathbb{Z}}}$}
is a canonical \Z{}-JSJ tree of $G$, and has no flexible vertices.
\end{lemma}

\begin{proof}
We first prove that $T_{\mathbb{Z}_{\max}}$ is universally \Z{}-elliptic.
Recall that edge stabilisers of $T_{\widetilde{\mathbb{Z}}}'$, and hence of $T_{\mathbb{Z}_{\max}}$, are contained in the set $\{\hat{G}_e\mid e\in E(T_{\widetilde{\mathbb{Z}}})\}$.
So, let $G_f$ be an edge stabiliser of $T_{\mathbb{Z}_{\max}}$, so $G_f=\hat{G}_e$ for some $e\in E(T_{\widetilde{\mathbb{Z}}})$, and let $S$ be an arbitrary \Z{}-tree of $G$.
Note that $S$ is also a $\widetilde{\mathbb{Z}}$-tree of $G$, as \Z{} subgroups are also $\widetilde{\mathbb{Z}}$.
As $T_{\widetilde{\mathbb{Z}}}$ is universally $\widetilde{\mathbb{Z}}$-elliptic, $G_e$ is universally $\widetilde{\mathbb{Z}}$-elliptic, and hence $G_e$ fixes some point $p$ of $S$.
As $S$ is a \Z{}-tree, and as $\hat{G}_e$ is the unique \Z{}-subgroup of $G$ containing $G_e$, it follows that $\hat{G}_e$ also fixes the point $p$ of $S$.
As the edge $f\in E(T_{\mathbb{Z}_{\max}})$ and the \Z{}-tree $S$ were arbitrary, we have that $T_{\mathbb{Z}_{\max}}$ is universally \Z{}-elliptic, as claimed.

We now prove that vertex groups of non-elementary vertices of $T_{\mathbb{Z}_{\max}}$ are elliptic in every \Z{}-tree $S$, i.e.\ non-elementary vertices are rigid.
As above, note that $S$ is also a $\widetilde{\mathbb{Z}}$-tree of $G$.
Let $v$ be a non-elementary vertex of $T_{\mathbb{Z}_{\max}}$.
Then there exists a non-elementary vertex $w$ in $T_{\widetilde{\mathbb{Z}}}$ and edges $f_1, \ldots, f_m$ adjacent to $w$ such that $G_{v}$ has stabiliser a multiple amalgam ${G}_{v}=G_{w}(\ast_{G_{f_i}}\hat{G}_{f_i})_{i=1}^m$ \cite[Lemma 4.10]{dahmani2011isomorphism}.
By Lemma \ref{lem:LCcollapse}, ${w}$ is a rigid vertex of $T_{\widetilde{\mathbb{Z}}}$, and so $G_w$ is elliptic in the $\widetilde{\mathbb{Z}}$-tree $S$.
Now, as $S$ is a \Z{}-tree, for each edge $f_i$ adjacent to $w$ the subgroup $\hat{G}_{f_i}$ must fix the same points as ${G}_{f_i}$, and so as ${G}_{w}$ is elliptic we also have that $G_{v}$ is elliptic.
As $S$ is an arbitrary \Z{}-tree, $v$ is rigid as required.

Finally, we prove that $T_{\mathbb{Z}_{\max}}$ dominates every other universally \Z{}-elliptic tree $S$.
Let $v$ be a vertex of $T_{\mathbb{Z}_{\max}}$.
Suppose $v$ is elementary.
Then any edge $f\in E(T_{\mathbb{Z}_{\max}})$ adjacent to $v$ satisfies $G_f\leqslant G_v$ and $\hat{G}_f=\hat{G}_v$.
Therefore, as $G_f=\hat{G}_f$ because $T_{\mathbb{Z}_{\max}}$ is a \Z{}-tree, we have that $G_v=G_f$.
As $T_{\mathbb{Z}_{\max}}$ is universally \Z{}-elliptic, $G_v=G_f$ is elliptic in every \Z{}-tree, and hence in every universally \Z{}-elliptic tree, as required for domination.
Suppose $v$ is non-elementary.
Then $v$ is rigid, and so is elliptic in every \Z{}-tree, and hence in every universally \Z{}-elliptic tree, as required for domination.
\end{proof}

\subsubsection{Uniqueness of deformation space}
JSJ trees or \Z{}-JSJ trees are not in general unique, but instead unique up to certain moves. More formally:
The \emph{deformation space} of a tree $T$ is the set of trees $T'$ such that $T$ dominates $T'$ and $T'$ dominates $T$. Equivalently, two trees are in the same deformation space if and only if they have the same elliptic subgroups.
Elements of a deformation space are connected by moves of a certain type \cite[Remark 2.13]{Guirardel2017JSJ} \cite[Theorem 1.1]{Clay2009Whitehead}.
Therefore, the following says that \Z{}-JSJ decompositions of a \CSA{} group are unique, up to the moves cited above.
The proof is clear because, by the definition of \Z{}-JSJ trees, they pairwise dominate one another and so all lie in the same deformation space.
\begin{proposition}
\label{prop:CSAJSJdefspace}
Let $G$ be a finitely generated one-ended \CSA{} group. If $G$ has a \Z{}-JSJ tree, then all \Z{}-JSJ trees of $G$ lie in the same deformation space $\mathcal{D}_{\mathbb{Z}_{\max}}$.
\end{proposition}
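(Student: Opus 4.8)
The plan is to read off the statement directly from the defining property of a \Z{}-JSJ tree, so the proof will be short. First I would fix two \Z{}-JSJ trees $T_1$ and $T_2$ of $G$; the hypothesis that $G$ has a \Z{}-JSJ tree guarantees this collection is non-empty (and in the situations relevant to this paper Proposition \ref{prop:CSAJSJtree} exhibits the canonical representative $T_{\mathcal{Z}_{\max}}$). Each of $T_1$ and $T_2$ is, by definition, universally \Z{}-elliptic. Applying the second clause of the definition of a \Z{}-JSJ tree to $T_1$, with the universally \Z{}-elliptic tree $T' = T_2$, shows that $T_1$ dominates $T_2$; by symmetry $T_2$ dominates $T_1$.

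Next I would invoke the characterisation of deformation spaces recalled just before the statement: the deformation space of a tree consists of exactly those trees which it both dominates and is dominated by, equivalently those trees with the same elliptic subgroups. Since $T_1$ and $T_2$ pairwise dominate one another, $T_2$ lies in the deformation space of $T_1$. As $T_1$ and $T_2$ were arbitrary \Z{}-JSJ trees of $G$, every \Z{}-JSJ tree lies in a single deformation space, which we name $\mathcal{D}_{\mathcal{Z}_{\max}}$; it is non-empty precisely because $G$ has a \Z{}-JSJ tree.

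I do not expect a genuine obstacle here: the assertion is formal once the definitions are unwound, and the only subtlety worth flagging in the write-up is that the word ``dominates'' must be used in \emph{both} directions — this is what forces the two trees into a common deformation space rather than merely into a dominance-comparable pair. The hypotheses that $G$ is finitely generated and \CSA{} are not used in the domination argument itself; they enter only to guarantee the existence of a \Z{}-JSJ tree (via Proposition \ref{prop:CSAJSJtree}) and to make the Guirardel--Levitt deformation-space framework applicable.
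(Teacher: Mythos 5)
Your proposal is correct and follows exactly the paper's argument: the paper also notes that, by the defining property of \Z{}-JSJ trees, any two of them dominate each other and hence lie in a common deformation space. Nothing is missing.
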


\subsection{Two-generated groups}
\label{sec:TwoGenJSJ}
We now apply the above results and discussions to the \Z{}-JSJ decompositions of one-ended two-generator \CSA{} groups.
In particular, we prove both existence and uniqueness of \Z{}-JSJ decomposition for these groups, and classify their structure (see Proposition \ref{prop:JSJsEverywhere}).
This lets us define, in Convention \ref{conv:JSJ}, \emph{the} \Z{}-JSJ decomposition of a one-ended two-generator \CSA{} group, and of a two-generator one-relator group with torsion.
Theorem \ref{thm:KWquote} then allows us to readily determine if a given splitting is, in fact, the \Z{}-JSJ decomposition of such a \CSA{} group.


\p{Existence}
We prove existence of \Z{}-JSJ decompositions by applying Lemma \ref{lem:CSAJSJtree} to the following lemma.
The proof of the lemma uses the canonical JSJ tree $T$ of $G$ \cite[Theorem 9.5]{Guirardel2017JSJ} which we previously used in the proof of Lemma \ref{lem:CSAJSJtree}. However, here we have no restriction on flexible vertices so our description changes slightly:
The canonical JSJ tree $T$ is a bipartite tree with vertex set $V_0(T)\sqcup V_1(T)$, where $V_0(T)$ consists of rigid and flexible vertices, and $V_1(T)$ consists of elementary vertices.

\begin{lemma}
\label{lem:RGnoflexible}
Let $G$ be a one-ended two-generator \CSA{} group. Then no JSJ tree of $G$ has a flexible vertex.
\end{lemma}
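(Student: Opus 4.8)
The plan is to argue by contradiction: suppose some JSJ tree $T$ of $G$ has a flexible vertex $v$, and produce a subgroup of $G$ isomorphic to a Baumslag--Solitar group or to $\mathbb{Z}^2$. This contradicts Lemma \ref{lem:RGCSA} (a \CSA{} group has no such subgroup) — equivalently, it contradicts the fact, noted in the introduction, that RG groups contain no Baumslag--Solitar subgroups. The first step is to identify the flexible vertex group $Q := G_v$. Since $G$ is a finitely generated one-ended \CSA{} group it has a JSJ decomposition in the sense of Guirardel--Levitt, and by their structure theorem \cite[Theorem 9.5]{Guirardel2017JSJ}, combined with the torsion-freeness of RG groups, the vertex $v$ being flexible forces $Q$ to be a QH (quadratically hanging) subgroup: $Q \cong \pi_1(\Sigma)$ for a compact surface $\Sigma$ with $\partial\Sigma\neq\emptyset$, the edges of $T/G$ incident to $v$ correspond to the boundary components of $\Sigma$, and $\Sigma$ is large enough to carry a two-sided essential simple closed curve that is not boundary-parallel. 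In particular $Q$ is free of rank $r\geqslant 2$, and each boundary curve $c_i$ of $\Sigma$ generates a maximal cyclic subgroup of $Q$.

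Next I would use the two-generation hypothesis to cut down the possibilities for $\Sigma$ and for the way it is glued into $G$. This is where Kapovich--Weidmann enter: their analysis of two-generated RG groups acting on trees \cite[Theorem 3.9]{Kapovich1999structure} bounds the complexity of any graph-of-groups decomposition of $G$, and applied to the splitting of $G$ obtained by collapsing all edges of $T$ not incident to $v$ it forces $r=2$ and that $v$ has only one or two incident edges. Thus $\Sigma$ is, up to homeomorphism, a once-punctured torus, a once-punctured Klein bottle, or a twice-punctured projective plane, and $G$ is obtained from $Q\cong F(x,y)$ by an amalgamation or HNN extension along a finite-index subgroup of a boundary subgroup $\langle c_i\rangle$.

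The final step is to extract the forbidden subgroup from each of these configurations. A boundary curve $c_i$ generates a maximal cyclic subgroup $\langle c_i\rangle$ of $Q$, and in $G$ it is identified, up to finite index, with a cyclic subgroup inside an adjacent vertex group $G_w$. If $w$ is elementary then $G_w$ is infinite cyclic, and by the uniqueness of the maximal cyclic subgroup containing a given element (established earlier in this section) $G_w$ and $\langle c_i\rangle$ lie in a common maximal cyclic subgroup of $G$, contradicting reducedness of the JSJ at the QH vertex $v$; hence $w$ is rigid. Chasing malnormality of cyclic subgroups in the \CSA{} group $G$ through the remaining HNN or amalgam data then yields a relation $t^{-1}c_i^{\,p}t = c_i^{\,q}$, producing a copy of $\operatorname{BS}(p,q)$, or, in the degenerate cases $p=\pm q$, a copy of $\mathbb{Z}^2$ — the desired contradiction.

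The main obstacle is precisely the second and third steps for these low-complexity surfaces: a naive rank count does not exclude a rank-$2$ QH vertex, so one genuinely needs the Kapovich--Weidmann structure theorem to pin down the gluing, and then a careful malnormality argument to locate the Baumslag--Solitar subgroup. (Alternatively, if \cite[Theorem 3.9]{Kapovich1999structure} is available in the strong form that explicitly lists the essential cyclic splittings of two-generated RG groups, one simply observes that no QH vertex occurs on that list and the last step is unnecessary.)
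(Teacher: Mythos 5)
Your first two steps track the paper's strategy (identify the flexible vertex group as a QH compact-surface group via \cite[Theorem 9.5]{Guirardel2017JSJ}, then use Kapovich--Weidmann \cite[Theorem 3.9]{Kapovich1999structure} and two-generation to constrain the decomposition), but your final step has a genuine gap. The claim that ``chasing malnormality\dots through the remaining HNN or amalgam data yields a relation $t^{-1}c_i^{\,p}t=c_i^{\,q}$'' is not a deduction that can be made in this generality, and in the amalgam configuration it is simply false: an amalgam of a once-punctured torus group with another vertex group along cyclic boundary subgroups (for instance the genus-$2$ surface group, an amalgam of two once-punctured torus groups) is a one-ended torsion-free hyperbolic group with a flexible vertex and contains no Baumslag--Solitar or $\mathbb{Z}^2$ subgroup. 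What excludes such configurations is not malnormality but two-generation, and that is exactly the content still to be proved at that point. Likewise, in the HNN case where the loop identifies two \emph{distinct} boundary subgroups (your twice-punctured projective plane), the stable letter conjugates $\langle c_1\rangle$ to a conjugate of $\langle c_2\rangle$ and no relation $t^{-1}c_i^{\,p}t=c_i^{\,q}$ arises; only when the loop identifies a boundary subgroup with a conjugate of itself (once-punctured torus or Klein bottle) do you actually get $\mathbb{Z}^2$. So as written the contradiction is asserted rather than derived, and at least one of the low-complexity cases is not handled.

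The paper closes this gap differently and more cleanly: after sliding away elementary vertices of the canonical tree $T_c$ (using the RG property) and invoking \cite[Theorem 3.9]{Kapovich1999structure} for the shape of the quotient, it applies \cite[Propositions 3.7 \& 3.8]{Kapovich1999structure} to conclude that the non-elementary vertex group $G_v$ is itself two-generated. If $v$ were flexible, $G_v$ would be a compact-surface (hence Fuchsian) group, and two-generated one-ended Fuchsian groups contain torsion \cite[Lemma 1]{Rosenberger1986generating}, contradicting torsion-freeness of RG groups; the universal-property definition of flexibility then transfers the conclusion to every JSJ tree. If you want to salvage your route, you must (i) justify via Kapovich--Weidmann that the amalgam case and the distinct-boundary HNN case cannot occur for a two-generated group, rather than appealing to malnormality, and (ii) treat the self-identification case separately, where the $\mathbb{Z}^2$/Klein bottle subgroup argument does work; at that point you are essentially reproducing the paper's use of the two-generation of the vertex group, and the torsion argument is the shorter finish.
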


\begin{proof}
As $G$ is \CSA{}, it has a (canonical) JSJ tree $T$ \cite[Theorem 9.5]{Guirardel2017JSJ}.
This tree can be altered to give a new tree $\overline{T}_c$ which has no elementary vertices: on the level of $T/G$,
for each elementary vertex $v$ pick an incident edge $e$ and collapse the subgraph $(v, e, \tau(e))$ to a point.
The quotient graph $\overline{T}_c/G$ either consists of a single vertex and no edges, or a single vertex and a single loop edge \cite[Corollary 8.6]{gardam2021algebraically}.

Then by reversing the above moves, and using the bipartite structure of $T$, we have that $T/G$ consists of a single non-elementary vertex $v$, possibly an elementary vertex $u$ with incident edges $e, f$ each connected to $v$ (this corresponds to the loop edge), and possibly some elementary vertices of degree one each connected to $v$.

It follows that the vertex group $G_v$ is one-ended and two-generated \cite[Propositions 8.1 and 8.3]{gardam2021algebraically} (we apply one of these citations for each elementary vertex).
Suppose $v$ is flexible.
Then $G_v$ is the fundamental group of a compact surface \cite[Theorem 9.5]{Guirardel2017JSJ}.
It follows that $G_v$ must contain $\mathbb{Z}^2$ (for example because such groups are Fuchsian and by applying the classification of two-generated Fuchsian groups \cite[Lemma 1]{Rosenberger1986generating}), which contradicts $G$ being \CSA{}.
Therefore, $T$ does not have a flexible vertex.

The result now follows as, by Proposition \ref{prop:CSAJSJdefspace}, any JSJ-tree has the same elliptic subgroups as $T$, so has a flexible vertex if and only if $T$ has a flexible vertex.
\end{proof}

We now summarise what we know so far, whilst replacing the \Z{} notation with the \ZC{} notation.
For torsion-free hyperbolic groups, the first paragraph of this proposition is standard, while the second is due to Kapovich and Weidmann \cite[Proof of Theorem 3.11]{Kapovich1999structure}.

\begin{proposition}
\label{prop:RGZmaxJSJ}
Let $G$ be a one-ended two-generator \CSA{} group. Then $G$ has a canonical \ZC{}-JSJ tree $T_{\mathcal{Z}_{\max}}$, and all \ZC{}-JSJ trees of $G$ lie in the same deformation space $\mathcal{D}_{\mathcal{Z}_{\max}}$.

Moreover, no \ZC{}-JSJ tree of $G$ has a flexible vertex.
\end{proposition}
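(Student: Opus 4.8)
The plan is to assemble the preparatory results of this section, so the argument will be short. First I would record that $G$, being an RG group, is \CSA{} by Lemma~\ref{lem:RGCSA}; since $G$ is two-generated it is in particular finitely generated, and by hypothesis it is one-ended. Next I would invoke Lemma~\ref{lem:RGnoflexible}, which tells us that no JSJ tree of $G$ has a flexible vertex. With these two facts in hand, $G$ satisfies the hypotheses of Proposition~\ref{prop:CSAJSJtree}, which immediately produces the canonical \Z{}-JSJ tree $T_{\mathcal{Z}_{\max}}$ together with the assertion that $T_{\mathcal{Z}_{\max}}$ itself has no flexible vertex.

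For the deformation-space claim I would observe that $G$ is now known to be a finitely generated, one-ended, \CSA{} group admitting a \Z{}-JSJ tree, so Proposition~\ref{prop:CSAJSJdefspace} applies verbatim: every \Z{}-JSJ tree of $G$ lies in the single deformation space containing $T_{\mathcal{Z}_{\max}}$, which we name $\mathcal{D}_{\mathcal{Z}_{\max}}$.

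The only clause requiring a genuine (if routine) argument is the ``moreover'': that \emph{no} \Z{}-JSJ tree of $G$ has a flexible vertex, not just the canonical one. Here I would take an arbitrary \Z{}-JSJ tree $S$ and a non-elementary vertex $v$ of $S$. Since $S$ and $T_{\mathcal{Z}_{\max}}$ lie in the same deformation space, they have the same elliptic subgroups, so $G_v$ is elliptic in $T_{\mathcal{Z}_{\max}}$, hence contained in some vertex stabiliser of $T_{\mathcal{Z}_{\max}}$. That stabiliser cannot be elementary, for otherwise $G_v$ would be virtually-$\mathbb{Z}$, contradicting the non-elementarity of $v$; so it is a non-elementary vertex of $T_{\mathcal{Z}_{\max}}$, hence rigid, hence universally \Z{}-elliptic. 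As subgroups of universally \Z{}-elliptic subgroups are again universally \Z{}-elliptic, $G_v$ is universally \Z{}-elliptic, so $v$ is rigid in $S$. Thus $S$ has no flexible vertex.

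I do not expect any real obstacle: each step is a direct appeal to an already-established result of this section, and the last paragraph is the standard observation that rigidity and flexibility of vertices are deformation-space invariants. The only thing to be careful about is checking that all hypotheses of Propositions~\ref{prop:CSAJSJtree} and~\ref{prop:CSAJSJdefspace} — one-endedness, finite generation, and the absence of flexible vertices in JSJ trees — are verified (via Lemmas~\ref{lem:RGCSA} and~\ref{lem:RGnoflexible}) before they are invoked.
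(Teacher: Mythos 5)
Your proof is correct and follows essentially the same route as the paper: Lemmas \ref{lem:RGCSA} and \ref{lem:RGnoflexible} verify the hypotheses of Proposition \ref{prop:CSAJSJtree}, which yields the canonical tree $T_{\mathcal{Z}_{\max}}$ without flexible vertices, and Proposition \ref{prop:CSAJSJdefspace} gives the single deformation space $\mathcal{D}_{\mathcal{Z}_{\max}}$. Your last paragraph simply spells out, correctly, the paper's closing observation that one \Z{}-JSJ tree has a flexible vertex if and only if they all do (flexibility is a deformation-space invariant), so there is no gap.
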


\begin{proof}
By Lemma \ref{lem:RGnoflexible}, we can apply Lemma \ref{lem:CSAJSJtree} to get a canonical \ZC{}-JSJ tree $T_{\mathcal{Z}_{\max}}$ for $G$, and this tree has no flexible vertices. The result then follows by Proposition \ref{prop:CSAJSJdefspace}, and because one \ZC{}-JSJ tree has a flexible vertex if and only if they all have a flexible vertex.
\end{proof}

\p{Uniqueness and structure}
We therefore have a unique deformation space of \ZC{}-JSJ trees, but we wish to strengthen this to obtain uniqueness of trees.
To do this, we require an additional property.

An \emph{essential \ZC{}-tree} is a \ZC{}-tree such that no edge group has finite index in an adjacent vertex group, while an \emph{essential \ZC{}-splitting} of a group is a corresponding graph of groups decomposition.
We define \emph{essential $\mathbb{Z}$-trees} and \emph{essential $\mathbb{Z}$-splittings} analogously, for $\mathbb{Z}$-trees.
We now prove Proposition \ref{prop:JSJsEverywhere}, which says that essential \ZC{}-trees are unique for our groups.
Note that a \ZC{}-JSJ decomposition with no elementary vertices is an essential \ZC{}-splitting.

The following says that (up to conjugation), there is a single essential \ZC{}-tree of $G$, and that this is a \ZC{}-JSJ tree of $G$.
This uniqueness is unusual, and is obtained by a result of Levitt on \emph{reduced} trees, which are trees $S$ where if an edge $e$ has the same stabiliser as one of its endpoints, then both endpoints of $e$ are in the same $G$-orbit of $S$, i.e.\ $e$ projects onto a loop in the quotient graph $S/G$ \cite[Theorem 1]{Levitt2005Characterizing} (see also \cite[Theorem 5.13]{dahmani2011isomorphism} for hyperbolic groups).
If $G$ contains no $\mathbb{Z}^2$ subgroups (e.g.\ is a hyperbolic or \CSA{} group), a \ZC{}-tree $S$ of $G$ is reduced if and only if it is essential;
an essential tree has no elementary vertices so is necessarily reduced,
while if $S$ is a reduced non-essential \ZC{}-tree then $T/G$ contains a loop edge with endpoints an elementary vertex, which by maximality of the edge groups gives a $\mathbb{Z}^2$ subgroup of $G$.

We now give the specific form of Levitt's result we will use, which assumes malnormal edge stabilisers; this assumption holds for \CSA{} groups, and for one-relator groups with torsion.
A $G$-tree $T$ is \emph{rigid} if it is the unique reduced tree in its deformation space.

\begin{lemma}
\label{lem:fixingPaths}
    Suppose $T$ is a reduced $G$-tree such that the quotient graph $T/G$ consists of a single vertex with a single loop edge.
    Suppose moreover $G$ is not an ascending HNN-extension, and that edge stablisers are malnormal in $G$.
    Then $T$ is rigid.
\end{lemma}

\begin{proof}
    Suppose that $g\in G$ fixes a two-edge path $p=(v_0, e, v_1, f, v_2)$ of $T$.
    Then $g\in G_e\cap G_f$.
    Assume that $g$ is non-trivial.
     As $T/G$ has only one edge, there exists an element $x\in G$ which takes $f$ to either $e$ or the reverse edge $\overline{e}$.
    Observing that $G_{e}=G_{\overline{e}}$ yields
    \[
    G_e
    =
    G_{xf}
    =
    x^{-1}G_fx,
    \]
    and so $g \in G_f \cap x^{-1}G_fx$. 
By malnormality of edge stabilisers, we have that $x \in G_f$, and so $f = \overline e$.
    As $p$ is assumed to be a reduced path, this is a contradiction.
    Therefore, we conclude that no non-trivial element $g\in G$ fixes a reduced two-edge path $p$ of $T$.
    
    Let $v$ be a vertex of $T$ and $e, f$ be edges with $\iota(e)=v=\iota(f)$.
    By the above paragraph, $G_e\cap G_{f}$ is trivial.
    Therefore, for all $v\in V(T)$, there are no edges $e, f\in E(T)$ such that $\iota(e)=v=\iota(f)$ and $G_e\leqslant G_f$, and so $T$ is rigid \cite[Theorem 1]{Levitt2005Characterizing}.
\end{proof}

We are now able to prove our main uniqueness result:

\begin{proposition}
\label{prop:JSJsEverywhere}
Let $G$ be a one-ended two-generator \CSA{} group, or a one-ended two-generator one-relator group with torsion which is not isomorphic to $\langle a, b\mid [a, b]^n\rangle$ for any $n>1$.

If $G$ has an essential \ZC{}-tree $T$ which contains at least one edge, then this tree is unique and is a \ZC{}-JSJ tree of $G$.
The \ZC{}-JSJ decomposition $T/G$ consists of a single rigid vertex with a single loop edge.
\end{proposition}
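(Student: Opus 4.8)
The plan is to determine the shape of $T$ from Kapovich--Weidmann's structure theory, then to establish that the (single) vertex group of $T$ is universally \Z{}-elliptic, and finally to read off both the \Z{}-JSJ property and the uniqueness from this. First, since $T$ is essential, no vertex of $T$ is elementary: an elementary vertex carries a virtually-$\mathbb{Z}$ group, which contains each adjacent (also virtually-$\mathbb{Z}$) edge group with finite index, contradicting essentiality. Thus $G$ acts on $T$ with infinite cyclic edge stabilisers and no elementary vertices, so in the RG case \cite[Theorem 3.9]{Kapovich1999structure} forces $T/G$ to be a single vertex with at most one loop edge; as $T$ has an edge it is exactly a single vertex $v$ with a single loop edge $e$, and $G_v$ is two-generated \cite[Propositions 3.7 \& 3.8]{Kapovich1999structure}. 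In the one-relator-with-torsion case $G$ is hyperbolic \cite[Theorem IV.5.5]{L-S} and the same conclusion follows by the parallel analysis, using classical one-relator techniques and excluding $\langle a,b\mid[a,b]^n\rangle$ (which is an orbifold group realising a single flexible vertex). Since $G_e$ is a \Z{}-subgroup adjacent to the non-elementary vertex $v$ it is maximal infinite cyclic in $G$ \cite[Lemma 4.10.1]{dahmani2011isomorphism}, so writing $\langle c\rangle=\theta_e(G_e)$ and $\langle c'\rangle=\theta_{\bar e}(G_e)$ we get the HNN presentation $G=\langle\, G_v,\, t\mid t^{-1}ct=c'\,\rangle$.

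The substantive step is to show $G_v$ is universally \Z{}-elliptic. Suppose not: $G_v$ fails to be elliptic in some \Z{}-tree $S$ of $G$. Restricting the $G$-action on $S$ to $G_v$ and passing to the minimal $G_v$-invariant subtree produces a non-trivial action of $G_v$ on a tree with cyclic edge stabilisers, i.e.\ a non-trivial cyclic splitting of $G_v$; moreover this splitting is not the one already induced on $G_v$ by the loop $e$, since $c,c'\in G_v$ are elliptic in $T$. Refining $T$ at $v$ by this splitting, then replacing edge groups by their maximal cyclic overgroups and collapsing any edge whose group equals an endpoint group, yields an essential \Z{}-tree of $G$ with at least two orbits of edges. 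This contradicts the fact that a two-generated RG group has no essential \Z{}-splitting with more than one edge \cite[Theorem 3.9]{Kapovich1999structure} (the torsion case is handled by the corresponding hyperbolic statement, again after excluding $\langle a,b\mid[a,b]^n\rangle$). Hence $G_v$, and so every subgroup of a conjugate of $G_v$, is elliptic in every \Z{}-tree of $G$.

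Now $T$ is a \Z{}-JSJ tree: its edge group $G_e$ is contained in $G_v$, hence elliptic in every \Z{}-tree, so $T$ is universally \Z{}-elliptic; and every subgroup elliptic in $T$ is conjugate into $G_v$, hence elliptic in every \Z{}-tree, so $T$ dominates every \Z{}-tree and in particular every universally \Z{}-elliptic one. Thus $v$ is a rigid vertex and $T/G$ is a single rigid vertex with a single loop edge. For uniqueness, the same argument shows that \emph{any} essential \Z{}-tree of $G$ with at least one edge is a \Z{}-JSJ tree; any two \Z{}-JSJ trees dominate one another, so all such trees lie in one deformation space (cf.\ Proposition~\ref{prop:CSAJSJdefspace}). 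Each is a reduced tree (no edge group equals an adjacent vertex group, by essentiality) whose quotient graph is a single vertex with a single loop, and such a graph of groups admits no non-trivial collapse or slide move, so by the description of deformation spaces \cite[Remark 2.13]{Guirardel2017JSJ} \cite[Theorem 1.1]{Clay2009Whitehead} any two of them are $G$-equivariantly isomorphic. Hence $T$ is unique.

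The main obstacle is the second step: ruling out a ``hidden'' cyclic splitting of $G$ concealed inside the two-generated vertex group $G_v$. I expect the efficient route to be via the Kapovich--Weidmann bound on the complexity of essential \Z{}-splittings of two-generated RG groups (and its hyperbolic counterpart for the torsion case); the delicate point will be verifying that the refinement of $T$ produced from the putative splitting of $G_v$ is genuinely essential and strictly more complex than $T$.
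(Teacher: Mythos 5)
There is a genuine gap at the step you yourself identify as substantive: the proof that $G_v$ is universally \Z{}-elliptic. To ``refine $T$ at $v$'' by the splitting of $G_v$ coming from its minimal subtree in $S$, you need the edge groups of $T$ incident to $v$ (the $G_v$-conjugates of $\langle c\rangle$ and $\langle c'\rangle$) to be elliptic in that $G_v$-tree; otherwise there is no equivariant way to reattach the edges of $T$ to the blown-up vertex. But ellipticity of $\langle c\rangle,\langle c'\rangle$ in an arbitrary \Z{}-tree $S$ is essentially the universal \Z{}-ellipticity of $T$ that you are trying to establish, so the argument is circular as written; and even granting the refinement, promoting edge groups to maximal cyclic overgroups and collapsing could well destroy essentiality or bring you back to a single edge orbit, which you flag but do not resolve. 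The paper avoids this trap entirely: it first constructs the canonical \Z{}-JSJ tree $T_{\mathcal{Z}_{\max}}$ (Proposition \ref{prop:RGZmaxJSJ}), slides away its elementary vertices, and then compares the resulting tree with $T$ using \cite[Proposition 2.2]{Guirardel2017JSJ} in the direction where universal ellipticity is already known, deducing first that $T$ is universally \Z{}-elliptic and only then, by a second refinement in the opposite direction, that $T$ is itself a \Z{}-JSJ tree.

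Two secondary points. First, the one-relator-with-torsion case cannot be dispatched by a ``corresponding hyperbolic statement'' of Kapovich--Weidmann: their structure theorem is for torsion-free (RG) groups. The paper instead uses that $\out(G)$ is virtually-$\mathbb{Z}$ \cite[Theorem A]{Logan2016Outer} together with the Dehn-twist count to force a single edge orbit, and \cite[Lemmas 5.1 \& 5.5]{logan2016JSJ} to know the edge groups of a \Z{}-tree are infinite cyclic in this case. Second, for uniqueness, ``a one-vertex one-loop quotient admits no non-trivial slide move'' is not automatic (one-loop deformation spaces can contain several reduced trees); the paper verifies Levitt's criterion --- there is no $g\in G_v$ with $g^{-1}G_eg\cap G_{\overline{e}}$ non-trivial --- and invokes \cite[Theorem 1]{Levitt2005Characterizing} to get the unique reduced (equivalently, essential) tree in the deformation space. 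Your overall skeleton (shape from Kapovich--Weidmann, rigidity of the vertex, uniqueness via the deformation space) matches the paper, but the central ellipticity step needs the canonical-JSJ comparison rather than the direct refinement you propose.
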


\begin{proof}
Let $T$ be as in the statement.
If $G$ is a one-relator group with torsion then every \ZC{}-subgroup is infinite cyclic \cite{Karrass1971subgroups, Cebotar1971subgroups}, and so every edge group of $T$ is infinite cyclic.
Recall from Proposition \ref{prop:RGZmaxJSJ} that if $G$ is a \CSA{} group then no \ZC{}-JSJ tree of $G$ has a flexible vertex.
The same is true if $G$ is a one-relator group with torsion, as here $\out(G)$ is virtually-$\mathbb{Z}$ \cite[Theorem A]{Logan2016Outer}, but if $T$ has a flexible vertex then there exists a $\mathcal{Z}$-tree $T'$ such that the quotient graph $T'/G$ has two edges, and so $\out(G)$ contains a $\mathbb{Z}^2$ subgroup \cite[Proposition 3.1.]{levitt2005automorphisms}, and so is not virtually cyclic, a contradiction.

We first prove that $T/G$ consists of a single vertex $v$ and a single loop edge.
If $G$ is a \CSA{} group then this is known \cite[Corollary 8.6]{gardam2021algebraically}, so suppose $G$ is a two-generator one-relator groups with torsion.
Here, $\out(G)$ is virtually-$\mathbb{Z}$ \cite[Theorem A]{Logan2016Outer}, and as noted above every \ZC{}-splitting is a $\mathbb{Z}$-splitting.
If $T/G$ contains more than one edge then $\out(G)$ is not virtually-$\mathbb{Z}$ \cite[Proposition 3.1.]{levitt2005automorphisms}, a contradiction.
Hence, $T/G$ contains at most one edge.
If it contained more than one vertex then $G$ would decompose as $H\ast_cK$ with $C\cong\mathbb{Z}$ and $H, K\not\cong\mathbb{Z}$, but this is not possible, as explained in the third paragraph of the proof of Lemma 5.1 of \cite{Logan2016Outer}.
Hence, $T/G$ consists of a single rigid vertex with a single loop edge.

Recall from the preamble that $T$ is reduced.
As $T$ is a \ZC{} tree, edge stabilisers are malnormal, and so by Lemma \ref{lem:fixingPaths} $T$ is rigid.
As $T$ was an arbitrary essential \ZC{} tree, we further have that $T$ is the unique essential tree in its deformation space.

We now prove that the tree $T$ is a \ZC{}-JSJ tree.
Let $T_{\mathcal{Z}_{\max}}$ be the canonical \ZC{}-JSJ tree guaranteed to exist either by Proposition \ref{prop:RGZmaxJSJ}, if $G$ is \CSA{}, or by existing theory otherwise \cite[Proposition 9.30]{Guirardel2017JSJ}.
As every vertex of $T$ is rigid, every vertex stabiliser $G_v$, $v\in V(T)$, is universally \ZC{}-elliptic, and hence elliptic in $T_{\mathcal{Z}_{\max}}$.
Therefore, $T$ dominates $T_{\mathcal{Z}_{\max}}$.
As $T_{\mathcal{Z}_{\max}}$ dominates every universally \ZC{}-elliptic tree, it follows that $T$ also dominates every universally \ZC{}-elliptic tree.
To see that $T$ is a universally \ZC{}-elliptic tree, consider again the tree $T_{\mathcal{Z}_{\max}}$.
As this is a \ZC{}-tree, if $e$ is an edge incident to an elementary vertex $v$ of $T_{\mathcal{Z}_{\max}}$ then $G_e=G_v$.
Therefore, we can slide an orbit of elementary vertices of $T_{\mathcal{Z}_{\max}}$ into an orbit of adjacent vertices (so on the level of $T_{\mathcal{Z}_{\max}}/G$, slide a single elementary vertex into an adjacent vertex), and the resulting tree is still a \ZC{}-JSJ tree of $G$, and has the same non-elementary vertex stabilisers as $T_{\mathcal{Z}_{\max}}$.
As there are finitely many orbits of elementary vertices of $T_{\mathcal{Z}_{\max}}$, this process terminates at an essential \ZC{} tree $S$ (essential as it has no elementary vertices).
As $T_{\mathcal{Z}_{\max}}$ is universally \ZC{}-elliptic and as $S$ has vertex stabilisers precisely the non-elementary vertex stabilisers of $T_{\mathcal{Z}_{\max}}$, we have that $S$ is also universally \ZC{}-elliptic.
There are then two options: either $S/G$ consists of a single rigid vertex, or $S/G$ contains at least one edge.
If $S/G$ consists of a single rigid vertex then the whole group $G$ is elliptic in every \ZC{}-splitting of $G$, and therefore $G$ has no \ZC{}-splittings, contradicting the existence of the \ZC{}-tree $T$ which has an edge.
Therefore, $S$ contains an edge, so is as in the statement, and so by the above $S/G$ consists of a single rigid vertex and a single loop edge.
As $S$ is universally-\ZC{} elliptic, it is elliptic with respect to $T$, and so there exists a tree $\hat{S}$ (called a ``standard refinement'' of $S$ \cite[Definition 2.5]{Guirardel2017JSJ}) such that every edge stabilizer of $T$ contains an edge stabilizer of $\hat{S}$ \cite[Proposition 2.2.(iii)]{Guirardel2017JSJ}, and moreover this tree is itself a \ZC{}-tree \cite[Discussion following Definition 9.25]{Guirardel2017JSJ}.
These facts combine to give us that every edge stabilizer of $T$ \emph{is} an edge stabilizer of $\hat{S}$.
Now, the tree $S$ can be obtained from $\hat{S}$ by collapsing certain edges to points \cite[Proposition 2.2.1]{Guirardel2017JSJ}. This means that the vertex groups of $S$ have graph-of-groups decompositions coming from their actions on $\hat{S}$, but as the only vertex in $S/G$ is rigid, these decompositions must be trivial. Since every vertex group of $S$ is contained in some vertex group of $T$, we see that the edges collapsed in this procedure have necessarily the same stabilisers as at least one of their endpoints, and so the collapsing does not change the set of edge stabilisers. It follows that edge stabilisers of $\hat{S}$ correspond precisely to edge stabilisers of $S$.
Therefore, every edge stabilizer of $T$ is an edge stabilizer of $S$.
As $S$ is universally \ZC{}-elliptic, $T$ is universally \ZC{}-elliptic.
Hence, $T$ is a \ZC{}-JSJ tree, as required

The result now follows, as \ZC{}-JSJ trees always lie in the same deformation space (as they dominate one another).
\end{proof}

Fuchsian groups are treated differently in the theory of JSJ decompositions. For example, Bowditch excludes these groups from his main theorem \cite[Theorem 0.1]{bowditch1998cut}. These groups play a minor role in this article (the only Fuchsian groups we consider are $\langle a, b\mid [a, b]^n\rangle$ for $n>1$), and it is easiest if, similarly to Bowditch, we simply define the \ZC{}-JSJ decomposition of such groups to be trivial.

\begin{convention}
\label{conv:JSJ}
Let $G$ be a one-ended two-generator \CSA{} group, or a one-ended two-generator one-relator group with torsion.
If $G$ is isomorphic to $\langle a, b\mid [a, b]^n\rangle$ for $n>1$, then ``the \ZC{}-JSJ decomposition of $G$'' is the graph of groups consisting a single vertex and no edges.
Otherwise, ``the \ZC{}-JSJ decomposition of $G$'' is a \ZC{}-JSJ decomposition with no elementary vertices; by Proposition \ref{prop:JSJsEverywhere} this is unique (up to conjugation).

Such a group $G$ has \emph{trivial \ZC{}-JSJ decomposition} if its \ZC{}-JSJ decomposition (as above) consists of a single vertex and no edges.
\end{convention}

\p{Spotting {\boldmath\ZC{}}-JSJ decompositions}
Our next result is only for \CSA{} groups; the analogous result for one-relator groups with torsion has a stronger statement bespoke for one-relator groups, so we postpone it until the next section.
These results give conditions which are equivalent to one of our groups having non-trivial \ZC{}-JSJ decomposition, and these conditions work well with algorithms. In particular, algorithms can easily detect Conditions (\ref{KWquote:2}) and (\ref{KWquote:3}) of the following theorem, and hence can detect a non-trivial \ZC{}-JSJ decomposition, while crucially Condition (\ref{KWquote:4}) says that Condition (\ref{KWquote:3}) actually finds, rather than just detects, the \ZC{}-JSJ decomposition of $G$.

\begin{theorem}
\label{thm:KWquote}
Let $G$ be a one-ended two-generator \CSA{} group with $\mathbb{Z}^2$ abelianisation.
The following are equivalent.
\begin{enumerate}
\item\label{KWquote:1}
$G$ has non-trivial \ZC{}-JSJ decomposition.
\item\label{KWquote:2}
$G$ has an essential $\mathbb{Z}$-splitting.
\item\label{KWquote:3}
$G$ admits a presentation $\langle a, b\mid \mathbf{t}(a, b^{-1}ab)\rangle$ for some subset $\mathbf{t}(a, y)\subset F(a, y)$.
\item\label{KWquote:4}
$G$ has \ZC{}-JSJ decomposition with a single rigid vertex and a single loop edge, corresponding to an HNN-extension with stable letter $b$:
\[
\langle a, y, b\mid \mathbf{t}(a, y), y=b^{-1}ab\rangle
\]
for some subset $\mathbf{t}(a, y)\subset F(a, y)$.
\end{enumerate}
Moreover, the presentations in (\ref{KWquote:3}) and (\ref{KWquote:4}) are related in the obvious way, so in each presentation the letters $a,b$ represent the same group elements of $G$ and the subsets $\mathbf{t}(a, y)$ are the same subsets of $F(a, y)$.
\end{theorem}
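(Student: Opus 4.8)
The plan is to establish the cycle $(1)\Rightarrow(2)\Rightarrow(3)\Rightarrow(4)\Rightarrow(1)$; the ``Moreover'' clause will then follow because $(3)$ and $(4)$ are related by the single Tietze move introducing (or eliminating) the generator $y=b^{-1}ab$, and I will make sure this compatibility is visible at the steps where $(3)$ and $(4)$ appear.

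Two of the implications are essentially bookkeeping with Convention~\ref{conv:JSJ}. For $(4)\Rightarrow(1)$: the graph of groups in $(4)$ has a genuine loop edge, so it is not a single vertex with no edges, and $G$ is not Fuchsian (it is torsion-free, being RG, whereas two-generated Fuchsian groups have torsion, as in the proof of Lemma~\ref{lem:RGnoflexible}); so by Convention~\ref{conv:JSJ} and uniqueness of the \Z{}-JSJ decomposition (Proposition~\ref{prop:RGZmaxJSJ}) the \Z{}-JSJ decomposition of $G$ is non-trivial. For $(1)\Rightarrow(2)$: conversely, since $G$ is not Fuchsian, Convention~\ref{conv:JSJ} says a non-trivial \Z{}-JSJ decomposition of $G$ is a \Z{}-JSJ decomposition with no elementary vertices that is not a single vertex, hence an essential \Z{}-tree with at least one edge; as $G$ is torsion-free its edge stabilisers are infinite cyclic, so this is an essential $\mathbb Z$-splitting.

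For $(3)\Rightarrow(4)$ I would introduce $y=b^{-1}ab$ and rewrite the presentation in $(3)$ as $\langle a,y,b\mid \mathbf t(a,y),\,y=b^{-1}ab\rangle$. Since $G$ is one-ended and torsion-free, $a$ and $y$ have infinite order in $H:=\langle a,y\mid\mathbf t(a,y)\rangle$ (a relation killing a power of $a$ or of $y$ in $H$ would, after substituting $y=b^{-1}ab$, kill a power of $a$ in $G$), so this is a genuine HNN-extension of $H$ with stable letter $b$ and associated subgroups $\langle a\rangle\cong\langle y\rangle\cong\mathbb Z$, and it equals $G$. This splitting is essential ($H$ cannot be virtually cyclic, for then $G$ would be a Baumslag--Solitar group or contain $\mathbb Z^2$, contradicting that $G$ is RG), and it is in fact a \Z{}-splitting: here the hypothesis that $G$ has $\mathbb Z^2$ abelianisation is used, since $\{a,b\}$ is a generating pair and hence maps to a basis of $G^{\mathrm{ab}}=\mathbb Z^2$, so $\bar a$ is primitive and $a$ is therefore not a proper power in $G$, i.e.\ $\langle a\rangle$ is maximal cyclic and lies in \Z{}. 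Thus we have an essential \Z{}-tree with an edge, so by Proposition~\ref{prop:JSJsEverywhere} it is the \Z{}-JSJ tree of $G$, with quotient a single rigid vertex and a single loop edge; this is $(4)$, with the same $a$, $b$, $\mathbf t$. Running the Tietze move backwards gives $(4)\Rightarrow(3)$ with the same data, which is the ``Moreover'' clause.

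The crux is $(2)\Rightarrow(3)$, which I expect to be the main obstacle. Starting from an essential $\mathbb Z$-splitting of $G$, I would invoke the structure theorem of Kapovich--Weidmann for two-generated RG groups acting on trees with infinite cyclic edge stabilisers \cite[Theorem~3.9]{Kapovich1999structure}, together with their rank bounds for the vertex groups \cite[Propositions~3.7 and~3.8]{Kapovich1999structure}: this forces the quotient graph to be a single vertex with a single loop edge (the fact already used in Proposition~\ref{prop:JSJsEverywhere}), and, more importantly, produces a generating pair of $G$ adapted to the splitting, with one generator equal to the stable letter $b$ and with the vertex group generated by a generator $a$ of one of the two associated edge subgroups together with $y:=b^{-1}ab$ (a generator of the other). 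Taking $\mathbf t(a,y)\subset F(a,y)$ to be any set of defining relators of the vertex group on the generating set $\{a,y\}$ and eliminating $y$ then yields the presentation $\langle a,b\mid \mathbf t(a,b^{-1}ab)\rangle$ of $(3)$. The delicate part is reading off from Kapovich--Weidmann's Nielsen-method argument that the adapted generators can be chosen so that $a$ \emph{generates} an edge subgroup (not merely lies in the vertex group $\langle a,b^{-1}ab\rangle$); the maximality observation from the previous paragraph is what makes this consistent, and should help organise the extraction.
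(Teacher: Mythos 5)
Your cycle breaks at the crux you yourself flag, namely $(\ref{KWquote:2})\Rightarrow(\ref{KWquote:3})$, and the gap is not merely a matter of ``organising the extraction'': the statement you hope to read off from Kapovich--Weidmann --- a generating pair $(a,b)$ of $G$ in which $b$ is the stable letter and $a$ \emph{generates an edge subgroup of the given splitting} --- is false for a general essential $\mathbb{Z}$-splitting. Essential $\mathbb{Z}$-splittings of these groups need not have maximal cyclic edge groups: any presentation $\langle x, y\mid r(x, y^{-1}x^2y)\rangle$ of such a $G$ exhibits an essential splitting over $\langle x^2\rangle$ with vertex group $\langle x, y^{-1}x^2y\rangle$ (compare Example \ref{ex:JSJexample}). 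For such a splitting your own primitivity observation works against you: since any generating pair maps to a basis of $G^{\mathrm{ab}}\cong\mathbb{Z}^2$, no generator of a proper-power subgroup such as $\langle x^2\rangle$ can be part of a generating pair, so the adapted pair you want does not exist for that splitting. What the Nielsen-method results actually give (cf.\ the use of \cite[Corollary 3.1]{Kapovich1999TwoGen} in Lemma \ref{lem:NEbasic}) is a generator which is a \emph{maximal root} $p$ of the edge generator; but then $b^{-1}pb$ need not lie in the vertex group at all (in the example, $y^{-1}xy\notin\langle x, y^{-1}x^2y\rangle$), so the presentation in $(\ref{KWquote:3})$ does not follow from the given splitting, and the implication $(\ref{KWquote:2})\Rightarrow(\ref{KWquote:3})$ remains unproved in your proposal.

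The repair is to change the route rather than the extraction: do not work with the arbitrary essential $\mathbb{Z}$-splitting, but first pass to a \Z{}-splitting. This is how the paper proceeds: $(\ref{KWquote:2})\Rightarrow(\ref{KWquote:1})$ via Proposition \ref{prop:JSJsEverywhere}, and then $(\ref{KWquote:1})\Rightarrow(\ref{KWquote:4})$ by applying \cite[Proposition 3.8]{Kapovich1999structure} to the \Z{}-JSJ splitting itself, where the edge groups are maximal cyclic and the attaching exponents are forced to be $\pm1$, so that the adapted generator genuinely generates an edge group and $y=b^{-1}ab$ lies in the vertex group; $(\ref{KWquote:3})$ then follows by the Tietze move. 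Your remaining implications are essentially sound and close to the paper's: your $(\ref{KWquote:3})\Rightarrow(\ref{KWquote:4})$ (infinite order of $a$ and $y$ in the base group, essentiality via excluding a virtually cyclic base, maximality of $\langle a\rangle$ from $\mathbb{Z}^2$ abelianisation, then Proposition \ref{prop:JSJsEverywhere}) matches the paper's argument, and $(\ref{KWquote:4})\Rightarrow(\ref{KWquote:1})$, $(\ref{KWquote:1})\Rightarrow(\ref{KWquote:2})$ are the easy bookkeeping steps; but without a correct proof of the hard direction from a splitting to the presentation, the theorem is not established.
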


\begin{proof}
We first prove the chain of implications (\ref{KWquote:4}) $\Rightarrow$ (\ref{KWquote:2}) $\Rightarrow$ (\ref{KWquote:1}) $\Rightarrow$ (\ref{KWquote:4}).

\medskip
\noindent(\ref{KWquote:4}) $\Rightarrow$ (\ref{KWquote:2})
The splitting given by (\ref{KWquote:4}) is a $\mathbb{Z}$-splitting, and is essential as the base group $H=\langle a, y\rangle$ of the HNN-extension is not virtually-cyclic as it is one-ended \cite[Proposition 8.3]{gardam2021algebraically}.

\medskip
\noindent(\ref{KWquote:2}) $\Rightarrow$ (\ref{KWquote:1})
If $G$ has an essential $\mathbb{Z}$-splitting then $G$ is not universally \ZC{}-elliptic, i.e.\ the \ZC{}-JSJ decomposition of $G$ does not consist of a single rigid vertex with no edges.
By Proposition \ref{prop:RGZmaxJSJ}, the \ZC{}-JSJ decomposition of $G$ does not consist of a single flexible vertex with no edges.
Thus, the \ZC{}-JSJ decomposition of $G$ has an edge, as required.

\medskip
\noindent(\ref{KWquote:1}) $\Rightarrow$ (\ref{KWquote:4})
By Proposition \ref{prop:JSJsEverywhere}, the \ZC{}-JSJ decomposition of $G$ consists of a single rigid vertex $v$ and a single loop edge $e$, as claimed. This corresponds to an HNN-extension $\langle H, t\mid t^{-1}p^nt=q^m\rangle$ where $\langle p\rangle$ and $\langle q\rangle$ are maximal cyclic in $G$. As this is a \ZC{}-JSJ decomposition we have $|m|=|n|=1$, so without loss of generality we may assume $m=n=1$.
Now, there exists some $h\in H$ such that $G=\langle th, p\rangle$ and $H=\langle p, h^{-1}qh\rangle$ \cite[Proposition 8.3]{gardam2021algebraically}, and so by taking $a=p$, $b=th$ and $y=h^{-1}qh$ we obtain the following presentation for $G$, as an HNN-extension with stable letter $b$:
\[
\langle a, y, b\mid \mathbf{t}(a, y), y=b^{-1}ab\rangle
\]
This still corresponds to the \ZC{}-JSJ decomposition of $G$ as the Bass--Serre tree is the same for the two HNN-extensions, and this tree is precisely the \ZC{}-JSJ tree.
Hence, (\ref{KWquote:4}) holds.

\medskip
Finally we prove (\ref{KWquote:4}) $\Leftrightarrow$ (\ref{KWquote:3}).
\medskip

\noindent(\ref{KWquote:3}) $\Rightarrow$ (\ref{KWquote:4})
First, apply the Tietze transformation corresponding to adding the new generator $y$ as $b^{-1}ab$ to get from the presentation in (\ref{KWquote:3}) to the presentation in (\ref{KWquote:4}):
\[
\langle a, b\mid \mathbf{t}(a, b^{-1}ab)\rangle
\cong\langle a, y, b\mid \mathbf{t}(a, y), y=b^{-1}ab\rangle.
\]
As $G$ has $\mathbb{Z}^2$ abelianisation, both the subgroups $\langle a\rangle$ and $\langle y\rangle$ are maximal infinite cyclic subgroups of $G$.
Therefore, the resulting presentation describes an HNN-extension, with base group $H=\langle a, y\mid \mathbf{t}(a, y)\rangle$ and stable letter $b$, as claimed.
Now, this HNN-extension corresponds to some essential $\mathbb{Z}$-splitting, because $H$ is one-ended as $G$ is one-ended \cite[Proposition 8.3]{gardam2021algebraically}, and so by Proposition \ref{prop:JSJsEverywhere} the vertex is rigid and this splitting is in fact the \ZC{}-JSJ decomposition of $G$.
Hence, (\ref{KWquote:4}) holds.

\medskip
\noindent(\ref{KWquote:4}) $\Rightarrow$ (\ref{KWquote:3})
This is obvious, by reversing the Tietze transformation from the above equivalence.
\end{proof}


\section{One-relator groups with torsion}
\label{sec:withTorsion}
Recall that a {one relator group with torsion} is a group given by a presentation $\langle \mathbf{x}\mid S^n\rangle$ where $n>1$; such groups are always hyperbolic, and so when they are one-ended we can discuss their \ZC{}-JSJ decompositions.

In this section we prove \cref{thm:flexibilityWithTorsion}; we also prove a result about the automorphic orbits of elements in the subgroup $\langle a, b^{-1}ab\rangle$ which is used in the proofs of both Theorems \ref{thm:flexibilityHyperbolic} and \ref{thm:flexibilityWithTorsion}.

\ZC{}-JSJ decompositions are only defined for one-ended groups and Fuchsian groups must be treated with care in JSJ theory (see Convention~\ref{conv:JSJ} and the discussion preceding it). The following classification of when two-generator one-relator groups are one-ended and are Fuchsian is therefore useful. Recall that a {primitive element} of $F(a, b)$ is an element which is part of a basis for $F(a, b)$. We say a free product $A\ast B$ is \emph{trivial} if either $A$ or $B$ is trivial, and is \emph{non-trivial} otherwise.

\begin{proposition}
\label{prop:Fuchsian}
The group defined by $\langle a, b\mid S^n\rangle$, $n\geq1$ maximal, is:
\begin{enumerate}
\item\label{Fuchsian:1} one-ended if and only if $S$ is non-primitive and non-trivial.
\item\label{Fuchsian:2} one-ended and Fuchsian if and only if $n>1$ and $S$ is conjugate to $[a, b]^{\pm 1}$.
\end{enumerate}
\end{proposition}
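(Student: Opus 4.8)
The plan is to deduce both parts from three classical ingredients: Stallings' theorem on ends, the description of the free (and, in the presence of torsion, finite-subgroup) splittings of two-generator one-relator groups, and the classification of two-generator Fuchsian groups together with the rigidity of one-relator groups with torsion. For part (\ref{Fuchsian:1}) the forward implication is quick: if $S$ is empty then $G=F(a,b)$ has infinitely many ends, and if $S$ is primitive then extending $S$ to a basis $\{S,T\}$ yields $G\cong\langle S\mid S^n\rangle\ast\langle T\rangle$, which is $\mathbb{Z}$ (if $n=1$) or $C_n\ast\mathbb{Z}$ (if $n>1$); in all cases $G$ is not one-ended. For the converse, suppose $S$ is non-primitive and non-empty. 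As $n$ is maximal, $S$ is not a proper power, hence not conjugate to a power of a primitive element, so $S$ lies in no proper free factor of $F(a,b)$; also $S$ must involve both $a$ and $b$ (otherwise it would be $a^{\pm 1}$ or $b^{\pm 1}$), so by the Freiheitssatz $\langle a\rangle\cong\mathbb{Z}$ and $G$ is infinite. If $n=1$ then $G$ is torsion-free, and since $S$ is in no proper free factor the classification of free decompositions of one-relator groups \cite{L-S} gives that $G$ is freely indecomposable and not $\cong\mathbb{Z}$, hence one-ended by Stallings. If $n>1$ then $G$ is a one-relator group with torsion; here I would invoke the analogous statement \cite{logan2016JSJ} that every splitting over a finite subgroup is visible in a free factor of $F(a,b)$ containing $S$, so again $G$ does not split over a finite subgroup and is one-ended.

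For part (\ref{Fuchsian:2}) the backward implication is easy: conjugating or inverting the relator does not change the group, so if $S\sim[a,b]^{\pm1}$ and $n>1$ then $G\cong\langle a,b\mid[a,b]^n\rangle$, which is the orbifold fundamental group of a torus with one cone point of order $n$; as $n\geqslant 2$ this orbifold has negative Euler characteristic, so $G$ is a cocompact Fuchsian group, and it is one-ended by part (\ref{Fuchsian:1}) since $[a,b]\in F(a,b)'$ is non-primitive. For the forward implication, assume $G=\langle a,b\mid S^n\rangle$ is one-ended and Fuchsian. If $n=1$ then $G$ would be a torsion-free two-generator Fuchsian group, contradicting \cite[Lemma 1]{Rosenberger1986generating} (exactly as used in the proof of Lemma \ref{lem:RGnoflexible}); hence $n>1$. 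So $G$ is a one-ended, hence cocompact, two-generator Fuchsian group with torsion, say $G=\pi_1^{\mathrm{orb}}(\mathcal{O})$ with $\mathcal{O}$ a closed $2$-orbifold of genus $g$ with cone points of orders $m_1,\dots,m_k$. Now $H_1(G)$ has free rank $2g$, while $H_1$ of a two-generator one-relator group is $\mathbb{Z}^2/\langle\overline{S^n}\rangle$, of free rank $1$ or $2$; this forces $g=1$ ($g=0$ is impossible as it would make $H_1(G)$ finite). Moreover $\mathbb{Z}^2\oplus(\text{torsion from the cone points})=H_1(G)$ has free rank $2$, so $\overline{S^n}=0$ and $H_1(G)=\mathbb{Z}^2$ exactly, which kills the cone-point torsion and hence forces $k\leqslant 1$; since $G$ has torsion, $k=1$ and $m_1=n$. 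Thus $\mathcal{O}$ is the torus with one cone point of order $n$ and $G\cong\langle c,d\mid[c,d]^n\rangle$.

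It remains to pass from this isomorphism to the conjugacy statement. By the solution of the isomorphism problem for two-generator one-relator groups with torsion (the relator being determined up to Whitehead automorphisms and inversion), $S^n$ is $\aut(F(a,b))$-equivalent, up to inversion, to $[c,d]^n$, and therefore $S$ is $\aut(F(a,b))$-equivalent to $[a,b]^{\pm1}$. Since $[a,b]$ is the boundary curve of a once-punctured torus, the $\aut(F(a,b))$-orbit of its conjugacy class consists only of the classes of $[a,b]$ and $[a,b]^{-1}$; hence $S\sim[a,b]^{\pm1}$, as required.

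The main obstacle is this last step: upgrading "isomorphic as groups" to "conjugate relator" rather than merely "$\aut(F(a,b))$-equivalent relator" rests on the non-trivial rigidity of one-relator groups with torsion, and one must take care to cite a version of that result that applies and to verify the orbit computation for $[a,b]$. A secondary subtlety is fixing the precise meaning of \emph{Fuchsian}: if non-orientable $2$-orbifold groups are admitted, the enumeration in part (\ref{Fuchsian:2}) must also dispose of cases such as the Klein-bottle-with-cone-point groups $\langle a,b\mid(a^2b^2)^n\rangle$, which one rules out by exhibiting an essential cyclic splitting, and hence a non-trivial \Z{}-JSJ decomposition, incompatible with being a single (closed) Fuchsian vertex.
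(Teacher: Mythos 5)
Part (\ref{Fuchsian:1}) of your argument is essentially the paper's: free/cyclic-free-product analysis when $S$ is empty or primitive, and freely-indecomposable-plus-torsion-free (via \cite{L-S}) when $n=1$; for $n>1$ the fact you say you ``would invoke'' is exactly what the paper gets from \cite[Lemma 3.2]{logan2016JSJ}, so that part is fine modulo a precise citation. The genuine gap is in the forward direction of part (\ref{Fuchsian:2}). The step ``$H_1(G)=\mathbb{Z}^2$ exactly, which kills the cone-point torsion and hence forces $k\leqslant 1$'' is false as a deduction: for an orientable genus-one orbifold with cone points of pairwise coprime orders $m_1,\dots,m_k$ one has $H_1\cong\mathbb{Z}^2\oplus\bigl(\bigoplus_i\mathbb{Z}/m_i\bigr)/\langle(1,\dots,1)\rangle\cong\mathbb{Z}^2$, so for instance signature $(1;2,3)$ has abelianisation $\mathbb{Z}^2$ and is not excluded by homology. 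What rules out these signatures is the two-generation hypothesis, via the rank formula for cocompact Fuchsian groups (rank $2g+k-1$ for genus $g\geqslant 1$) or, equivalently, the Purzitsky--Rosenberger classification of two-generator Fuchsian groups --- and that is precisely what the paper's single citation of \cite[Lemma 1]{Rosenberger1986generating} supplies: one-ended and Fuchsian if and only if $n>1$ and $G\cong\langle a,b\mid[a,b]^n\rangle$, after which the paper (like you) uses Pride's rigidity \cite{Pride1977} and \cite[Theorem 3.9]{mks} to upgrade the isomorphism to ``$S$ is conjugate to $[a,b]^{\pm1}$''; that final upgrade in your proposal is correct and is the same as the paper's.

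A secondary point: your suggested way of disposing of non-orientable quotients (exhibiting an essential cyclic splitting ``incompatible with being a single closed Fuchsian vertex'') does not work, because one-ended Fuchsian and NEC groups \emph{do} admit essential cyclic splittings along simple closed curves --- this is exactly why the paper handles Fuchsian groups by convention in the \Z{}-JSJ theory --- so nothing in the present proposition can be contradicted that way. Under the standard meaning of Fuchsian (discrete in $\operatorname{PSL}_2(\mathbb{R})$), which is the sense in which Rosenberger's lemma and the paper use it, the quotient orbifold is automatically orientable and the issue does not arise; if NEC groups were allowed, groups such as $\langle a,b\mid(a^2b^2)^n\rangle$ would indeed be genuine problems for the statement, not something to be patched by a JSJ argument.
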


\begin{proof}
Let $G = \langle a, b\mid S^n\rangle$, $n\geq1$ maximal.

For (\ref{Fuchsian:1}), if $S$ is trivial then $G$ is free and so not one-ended, while if $S$ is primitive then $G\cong\langle a, b\mid a^n\rangle$, so $G$ is either two-ended (if $n=1$) or infinitely ended (if $n>1$).
Hence, if $G$ is one-ended then $S$ is non-primitive and non-trivial. For the other direction, if $S$ is non-primitive and non-trivial then the group $G_1$ defined by $\langle a, b\mid S\rangle$ does not split non-trivially as a free product \cite[Proposition II.5.13]{L-S} and is not cyclic \cite[Proposition II.5.11]{L-S}, but is torsion-free \cite[Proposition II.5.18]{L-S}. Hence, $G_1$ is one-ended, and so also $G$ is one-ended \cite[Lemma 3.2]{logan2016JSJ}.

For (\ref{Fuchsian:2}), the group $G$ is one-ended and Fuchsian if and only if $n>1$ and $G\cong \langle a, b\mid [a, b]^n\rangle$ \cite[Lemma 1]{Rosenberger1986generating}, if and only if there exists an automorphism $\phi\in\aut(F(a, b))$ such that $\phi(S^n)=[a, b]^{\pm n}$ \cite{Pride1977}, if and only if $S$ is conjugate to $[a, b]^{\pm 1}$ \cite[Theorem 3.9]{mks}, as required.
\end{proof}

Next we prove the analogue of Theorem \ref{thm:KWquote} for two-generator one-relator groups with torsion. The main differences is that the set $\mathbf{t}(a, y)$ is replaced with a single word $T(a, y)^n$, and that Condition (\ref{JSJclassificationONEREL:3}) gives information about this word.
The restrictions on the word $S$ in this theorem correspond precisely to $G$ being one-ended and non-Fuchsian, by Proposition \ref{prop:Fuchsian}.
In the theorem, the pair of letters $(a, b)$ in (\ref{JSJclassificationONEREL:3.5}) and (\ref{JSJclassificationONEREL:4}) represent the same pair of group elements of $G$. However, the pair $(a,b)$ from $\mathcal{P}$ are different, and are related to those in (\ref{JSJclassificationONEREL:3.5}) and (\ref{JSJclassificationONEREL:4}) via the map $\psi$.

\begin{theorem}
\label{thm:JSJclassificationONEREL}
Let $G$ be a group admitting a two-generator one-relator presentation $\mathcal{P}=\langle a, b\mid R\rangle$ where $R= S^n$ with $n>1$ maximal and where $S\in F(a, b)$ is non-trivial, non-primitive, and not conjugate to $[a, b]^{\pm1}$.
The following are equivalent.
\begin{enumerate}
\item\label{JSJclassificationONEREL:1}
$G$ has non-trivial \ZC{}-JSJ decomposition.
\item\label{JSJclassificationONEREL:2}
$G$ has an essential $\mathbb{Z}$-splitting.
\item\label{JSJclassificationONEREL:3}
There exists an automorphism $\psi\in\aut(F(a, b))$ such that $\psi(S)\in\langle a, b^{-1}ab\rangle$.
\item\label{JSJclassificationONEREL:3.5}
$G$ admits a presentation $\langle a, b\mid T(a, b^{-1}ab)^n\rangle$ for some word $T\in F(a, y)$.
\item\label{JSJclassificationONEREL:4}
$G$ has \ZC{}-JSJ decomposition with a single rigid vertex and a single loop edge, corresponding to an HNN-extension with stable letter $b$:
\[
\langle a, y, b\mid T(a, y)^n, y=b^{-1}ab\rangle
\]
\end{enumerate}
Moreover, the presentations in (\ref{JSJclassificationONEREL:3.5}) and (\ref{JSJclassificationONEREL:4}) are related in the obvious way, so in each presentation the letters $a, b$ represent the same elements of $G$ and the words $T(a, y)$ are the same, and indeed may be taken to be such that $T(a, b^{-1}ab)=\psi(S)$, with $\psi$ as in (\ref{JSJclassificationONEREL:3}).
\end{theorem}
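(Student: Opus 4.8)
The plan is to reduce Theorem \ref{thm:JSJclassificationONEREL} to the torsion-free machinery already in place, principally Theorem \ref{thm:KWquote} and Proposition \ref{prop:JSJsEverywhere}, by passing between the one-relator group with torsion $G=\langle a,b\mid S^n\rangle$ and its one-relator ``root'' counterpart. One-relator groups with torsion are hyperbolic \cite[Theorem IV.5.5]{L-S}, and under the stated hypotheses on $S$ (non-empty, non-primitive, not conjugate to $[a,b]^{\pm1}$) Proposition \ref{prop:Fuchsian} tells us that $G$ is one-ended and non-Fuchsian, so the \Z{}-JSJ decomposition of $G$ is defined and governed by Convention \ref{conv:JSJ}. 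Since one-relator groups with torsion have virtually-$\mathbb{Z}$ outer automorphism group \cite[Theorem A]{Logan2016Outer}, Proposition \ref{prop:JSJsEverywhere} applies directly: if $G$ admits any essential \Z{}-tree with an edge then that tree is unique, is the \Z{}-JSJ tree, and $T/G$ is a single rigid vertex with a single loop edge. This immediately gives the equivalences $(\ref{JSJclassificationONEREL:1})\Leftrightarrow(\ref{JSJclassificationONEREL:2})$ and, combined with the fact that every \Z{}-splitting of such a group is a $\mathbb{Z}$-splitting \cite[Lemmas 5.1 \& 5.5]{logan2016JSJ}, reduces the content of the theorem to extracting the form of the HNN-extension and matching it to the automorphic condition on $S$.

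The cycle I would prove is $(\ref{JSJclassificationONEREL:3})\Rightarrow(\ref{JSJclassificationONEREL:3.5})\Rightarrow(\ref{JSJclassificationONEREL:4})\Rightarrow(\ref{JSJclassificationONEREL:2})$ and $(\ref{JSJclassificationONEREL:1})\Rightarrow(\ref{JSJclassificationONEREL:3})$. For $(\ref{JSJclassificationONEREL:3})\Rightarrow(\ref{JSJclassificationONEREL:3.5})$: if $\psi\in\aut(F(a,b))$ sends $S$ into $\langle a,b^{-1}ab\rangle$, then since $\psi(S^n)=\psi(S)^n$, applying the automorphism $\psi$ to the presentation $\mathcal{P}$ gives $G\cong\langle a,b\mid \psi(S)^n\rangle$ with $\psi(S)=T(a,b^{-1}ab)$ for some $T\in F(a,y)$; relabelling the generators (this is where the pair $(a,b)$ in $(\ref{JSJclassificationONEREL:3.5})$, $(\ref{JSJclassificationONEREL:4})$ differs from that in $\mathcal{P}$, mediated by $\psi$) yields the desired presentation. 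For $(\ref{JSJclassificationONEREL:3.5})\Rightarrow(\ref{JSJclassificationONEREL:4})$: the Tietze transformation adding $y:=b^{-1}ab$ rewrites the presentation as $\langle a,y,b\mid T(a,y)^n, y=b^{-1}ab\rangle$, which is visibly an HNN-extension of $H=\langle a,y\mid T(a,y)^n\rangle$ with stable letter $b$ and associated subgroups $\langle a\rangle$ and $\langle y\rangle$; one must check these are each maximal infinite cyclic in $G$ — this will follow since $a$ has infinite order in $G$ (as $S$ is not conjugate to a power of $a$, using $\mathbb{Z}^2$ versus the torsion structure; here one invokes that maximal cyclic subgroups behave well, or that $H$ and the HNN structure force maximality via \cite[Proposition 3.8]{Kapovich1999structure}-type reasoning adapted to torsion) — and then Proposition \ref{prop:Fuchsian}(\ref{Fuchsian:1}) applied to $T(a,y)$ (which is non-empty and non-primitive since $H$ is one-ended, because $G$ one-ended forces $H$ one-ended) shows $H$ is one-ended, so the splitting is essential and by Proposition \ref{prop:JSJsEverywhere} the vertex is rigid and this is the \Z{}-JSJ decomposition. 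The implication $(\ref{JSJclassificationONEREL:4})\Rightarrow(\ref{JSJclassificationONEREL:2})$ is immediate. For $(\ref{JSJclassificationONEREL:1})\Rightarrow(\ref{JSJclassificationONEREL:3})$: by Proposition \ref{prop:JSJsEverywhere} the \Z{}-JSJ decomposition is a single rigid vertex and a loop edge, i.e. an HNN-extension $\langle H,t\mid t^{-1}p^{\epsilon_1}t=q^{\epsilon_2}\rangle$ with $\langle p\rangle,\langle q\rangle$ maximal cyclic, hence $|\epsilon_1|=|\epsilon_2|=1$; then a Nielsen-transformation / rank-two argument in the spirit of \cite[Proposition 3.8]{Kapovich1999structure} — here we need its analogue for one-relator groups with torsion, which should follow from the freiheitssatz for such groups and the structure of two-generator subgroups — produces $h\in H$ with $G=\langle th, p\rangle$, $H=\langle p, h^{-1}qh\rangle$, and rewriting the one-relator presentation $\mathcal{P}$ in terms of the new generating pair $(p, th)$ exhibits a change-of-basis automorphism $\psi$ of $F(a,b)$ with $\psi(S)\in\langle a, b^{-1}ab\rangle$ (the word $T(a,y)^n$ pulled back along $y=b^{-1}ab$); one must confirm this rewriting does not change the root, i.e. $\psi(S)$ is genuinely an $n$-th root of $\psi(S^n)=\psi(R)$ — automatic since $\psi$ is an automorphism — and that the resulting relator is indeed $T^n$ with $T$ in the correct subgroup. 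Finally, for the ``moreover'' clause, the presentations in $(\ref{JSJclassificationONEREL:3.5})$ and $(\ref{JSJclassificationONEREL:4})$ are related by the single Tietze move above, so $a,b$ represent the same group elements and $T(a,y)$ is literally the same word; and by construction in $(\ref{JSJclassificationONEREL:3})\Rightarrow(\ref{JSJclassificationONEREL:3.5})$ we may take $T(a,b^{-1}ab)=\psi(S)$.

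The main obstacle I expect is $(\ref{JSJclassificationONEREL:1})\Rightarrow(\ref{JSJclassificationONEREL:3})$, specifically transporting the Kapovich--Weidmann-style structural statement \cite[Proposition 3.8]{Kapovich1999structure} — which is stated for torsion-free RG groups — to one-relator groups with torsion, and controlling the interaction between the \Z{}-JSJ HNN structure of $G$ and the \emph{one-relator} presentation $\mathcal{P}$: one needs that the two-generated HNN-extension realising the \Z{}-JSJ decomposition can be recoordinatised so that the relator $S$ (not merely $R=S^n$) is carried into $\langle a, b^{-1}ab\rangle$ by a single automorphism of the free group $F(a,b)$. This requires knowing that the base group $H$ inherits a one-relator-with-torsion presentation of the shape $\langle a, y\mid T(a,y)^n\rangle$ — plausibly via a Magnus-subgroup / HNN-normalization argument — together with a Nielsen-reduction argument on the generating pair. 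A secondary technical point is verifying, throughout, that the associated cyclic subgroups $\langle a\rangle$, $\langle y\rangle$ are maximal cyclic in $G$ (so that the splittings are genuinely \Z{}-splittings rather than merely $\mathbb{Z}$-splittings), which I would handle using the hypothesis $\mathbb{Z}^2$ is \emph{not} assumed here, so instead via the classification of centralizers / maximal cyclic subgroups in one-relator groups with torsion and the explicit form $y=b^{-1}ab$.
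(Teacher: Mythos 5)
Your overall cycle is logically complete, and the directions (\ref{JSJclassificationONEREL:1})$\Leftrightarrow$(\ref{JSJclassificationONEREL:2}), (\ref{JSJclassificationONEREL:3})$\Rightarrow$(\ref{JSJclassificationONEREL:3.5}) and (\ref{JSJclassificationONEREL:3.5})$\Rightarrow$(\ref{JSJclassificationONEREL:4}) track the paper's argument closely. But the implication you yourself flag as ``the main obstacle'', (\ref{JSJclassificationONEREL:1})$\Rightarrow$(\ref{JSJclassificationONEREL:3}), is a genuine gap, not a technical loose end: your plan is to transport Kapovich--Weidmann's structural result \cite[Proposition 3.8]{Kapovich1999structure} (stated for torsion-free RG groups) to one-relator groups with torsion, recoordinatise the HNN-extension by a Nielsen move, and show the base group inherits a one-relator presentation $\langle a,y\mid T(a,y)^n\rangle$ -- none of which you prove, and no substitute for the torsion setting is offered. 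The paper avoids this entirely and never touches the base group directly: from a non-trivial \Z{}-JSJ decomposition it extracts an essential $\mathbb{Z}$-splitting \cite[Theorem A]{logan2016JSJ}, deduces that $\out(G)$ is infinite \cite[Theorem 5.1]{levitt2005automorphisms}, and then invokes \cite[Lemma 5.1]{Logan2016Outer}, which for two-generator one-relator groups with torsion with infinite outer automorphism group directly produces a one-relator presentation whose relator lies in $\langle a, b^{-1}ab\rangle$; condition (\ref{JSJclassificationONEREL:3}) then follows from Pride's theorem that any two such one-relator presentations have $\aut(F(a,b))$-equivalent relators \cite{Pride1977}. Without that (or an equivalent) input, your chain does not close.

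Two secondary points. First, in (\ref{JSJclassificationONEREL:3.5})$\Rightarrow$(\ref{JSJclassificationONEREL:4}) you leave the maximality of $\langle a\rangle$ and $\langle y\rangle$ in $G$ as ``this will follow since\dots''; the paper pins this down concretely: $a$ has infinite order by the B.~B.~Newman Spelling Theorem \cite[Theorem IV.5.5]{L-S}, and maximality of both $\langle a\rangle$ and $\langle y\rangle$ follows from \cite[Lemma 2.1]{Newman1973Soluble} applied to the two presentations $\langle a, b\mid T(a, b^{-1}ab)^n\rangle$ and $\langle y, b\mid T(byb^{-1}, y)^n\rangle$. Second, your justification that the splitting is essential is circular as written: you argue $T$ is non-primitive ``since $H$ is one-ended, because $G$ one-ended forces $H$ one-ended'', i.e.\ you assume the conclusion you are trying to establish via Proposition \ref{prop:Fuchsian}, and the asserted implication ``$G$ one-ended forces $H$ one-ended'' is exactly the kind of statement that is supplied by \cite[Proposition 3.8]{Kapovich1999structure} only in the torsion-free case. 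These are fixable with the one-relator-with-torsion toolkit the paper cites, but as written they are assertions rather than proofs.
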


\begin{proof}



We start with
(\ref{JSJclassificationONEREL:3}) $\Leftrightarrow$ (\ref{JSJclassificationONEREL:3.5}).

\medskip
\noindent(\ref{JSJclassificationONEREL:3}) $\Rightarrow$ (\ref{JSJclassificationONEREL:3.5})
It is a standard result in the theory of group presentations that $\langle a, b\mid S^n\rangle\cong\langle a, b\mid \psi(S^n)\rangle$, so the result follows by taking $T(a, y)\in F(a, y)$ such that $T(a, b^{-1}ab)=\psi(S)$.

\medskip
\noindent(\ref{JSJclassificationONEREL:3.5}) $\Rightarrow$ (\ref{JSJclassificationONEREL:3})
This holds as the word $S':=T(a, b^{-1}ab)\in\langle a, b^{-1}ab\rangle$ is in the $\aut(F(a, b))$-orbit of $S$ \cite{Pride1977}, as required.

\medskip
Finally we prove (\ref{JSJclassificationONEREL:4}) $\Rightarrow$ (\ref{JSJclassificationONEREL:1}) $\Rightarrow$ (\ref{JSJclassificationONEREL:2}) $\Rightarrow$ (\ref{JSJclassificationONEREL:3.5}) $\Rightarrow$ (\ref{JSJclassificationONEREL:4}).

\medskip

\noindent(\ref{JSJclassificationONEREL:4}) $\Rightarrow$ (\ref{JSJclassificationONEREL:1})
This is clear as the stated \ZC{}-JSJ decomposition is non-trivial.

\noindent(\ref{JSJclassificationONEREL:1}) $\Rightarrow$ (\ref{JSJclassificationONEREL:2})
This is immediate from the definitions (see Convention \ref{conv:JSJ}).

\medskip
\noindent(\ref{JSJclassificationONEREL:2}) $\Rightarrow$ (\ref{JSJclassificationONEREL:3.5})
Suppose $G$ has an essential $\mathbb{Z}$-splitting.
Then $G$ has infinite outer automorphism group \cite[Theorem 5.1]{levitt2005automorphisms}, and so there exists $S'\in\langle a, b^{-1}ab\rangle$ such that $G\cong \langle a, b\mid S'\rangle$ \cite[Lemma 5.1]{Logan2016Outer}. Setting $T(a, y)$ to be the word such that $T(a, b^{-1}ab)=S'$ gives the result.

\medskip
\noindent(\ref{JSJclassificationONEREL:3.5}) $\Rightarrow$ (\ref{JSJclassificationONEREL:4})
The group $G$ has presentation
\[
\langle a, b\mid T(a, b^{-1}ab)^n\rangle\cong\langle a, b, y\mid T(a, y)^n, y=b^{-1}ab\rangle
\]
which has the claimed form. This is an HNN-extension with base group $H=\langle a, y\mid T(a, y)^n\rangle$ and stable letter $b$ as the subgroups $\langle a\rangle$ and $\langle y\rangle$ of $H$ are isomorphic, because $a$, and hence its conjugate $y=b^{-1}ab$, have infinite order in $G$ by the B.B. Newman Spelling Theorem \cite[Theorem IV.5.5]{L-S}.
Indeed, $G$ has presentations $\langle a, b\mid T(a, b^{-1}ab)^n\rangle$ and $\langle y, b\mid T(byb^{-1}, y)^n\rangle$, whence it follows that the subgroups $\langle a\rangle$ and $\langle y\rangle$ are maximal cyclic subgroups of $G$ \cite[Lemma 2.1]{Newman1973Soluble}.

Therefore, the HNN-extension describes a graph of groups decomposition $\bGamma$ of $G$ with a single vertex and a single loop edge, and where the edge groups are maximal cyclic in $G$. By Proposition \ref{prop:JSJsEverywhere}, the vertex is rigid and $\bGamma$ is the \ZC{}-JSJ decomposition of $G$, as required.
\end{proof}

Before proving Theorem \ref{thm:flexibilityWithTorsionBODYVERSION} (which corresponds to Theorem \ref{thm:flexibilityWithTorsion}) we need the following result about orbits of elements of $\langle a, b^{-1}ab\rangle$ under automorphisms of $F(a,b)$. We also use this result in the proof of Theorem \ref{thm:flexibilityHyperbolicBODYVERSION} (Theorems \ref{thm:flexibilityWithTorsionBODYVERSION} and \ref{thm:flexibilityHyperbolicBODYVERSION} combine to prove Theorem \ref{thm:flexibilityHyperbolic}).
For a letter $z\in\mathbf{z}$ and a word $W\in F(\mathbf{z})$, a \emph{$z$-syllable of $W$} is a maximal subword of the form $z^i$, $i\neq0$.

\begin{proposition}
\label{prop:whitehead}
Let $W\in\langle a, b^{-1}ab\rangle$.
Then there exists a cyclic shift $W'$ of $W$ such that $W'$ has shortest length in its $\aut(F(a, b))$-orbit, and $W'\in\langle a, b^{-1}ab\rangle$.
\end{proposition}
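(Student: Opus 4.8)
The plan is to exhibit $W'$ explicitly as a cyclically reduced representative of the conjugacy class of $W$ that happens to lie in $\langle a, b^{-1}ab\rangle$, and then to prove its minimality via Whitehead's peak-reduction theorem.

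Write $y = b^{-1}ab$. I would first record that $\langle a, y\rangle$ is free with basis $\{a,y\}$: it is contained in the kernel of the $b$-exponent-sum homomorphism $F(a,b)\to\mathbb{Z}$, which is free on $\{b^{-n}ab^n : n\in\mathbb{Z}\}$, and $a, y$ are two of these free generators. Cyclically reducing $W$ inside the free group $\langle a, y\rangle$ yields two cases. If the result is $1$, a power of $a$, or a power of $y$, then $W$ is conjugate in $F(a,b)$ to some $a^c$ (using that $y^c = b^{-1}a^cb$ is conjugate to $a^c$), and $W' := a^c$ works: it lies in $\langle a, b^{-1}ab\rangle$, it is a cyclic shift of $W$, and it has minimal length $|c|$ in its orbit, since any automorphic image is $\phi(a)^c$ with $\phi(a)$ primitive, hence of cyclic length $\geqslant |c|$. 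Otherwise $W$ is conjugate in $F(a,b)$ to
\[
W' := a^{d_1}y^{c_1}\cdots a^{d_k}y^{c_k} \;=\; a^{d_1}b^{-1}a^{c_1}b\cdots a^{d_k}b^{-1}a^{c_k}b
\]
with $k\geqslant 1$ and all $d_i, c_i\neq 0$. This word is visibly cyclically reduced in $F(a,b)$, lies in $\langle a, b^{-1}ab\rangle$, and --- being a cyclically reduced word in the conjugacy class of $W$ --- is a cyclic shift of $W$. Set $L := |W'| = \sum_i(|d_i|+|c_i|)+2k$; it remains to prove that $L$ is the shortest length in the $\aut(F(a,b))$-orbit of $W$.

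Since inner automorphisms lie in $\aut(F(a,b))$, this shortest length equals the shortest cyclic length attained over the orbit, so by Whitehead's peak-reduction theorem it suffices to check that no Whitehead automorphism of the second kind strictly decreases $|W'|_{\mathrm{cyc}} = L$ (those of the first kind permute and invert the letters $a^{\pm1}, b^{\pm1}$ and so preserve all lengths). Every nontrivial second-kind Whitehead automorphism of $F(a,b)$ either fixes $a$ (multiplier $a^{\pm1}$: it sends $b$ to one of $a^{\pm1}b$, $ba^{\pm1}$, $a^{\varepsilon}ba^{-\varepsilon}$) or moves it (multiplier $b^{\pm1}$: $a\mapsto ab^{\pm1}$, $a\mapsto b^{\pm1}a$, or $a\mapsto b^{\pm1}ab^{\mp1}$). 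I would handle the $a$-fixing ones together with $a\mapsto bab^{-1}$ and $a\mapsto b^{-1}ab$ by observing that each of these either is an inner automorphism of $F(a,b)$ (the maps $a\mapsto b^{\pm1}ab^{\mp1}$ are conjugation by $b^{\pm1}$, and $b\mapsto a^{\pm1}ba^{\mp1}$ is conjugation by $a^{\pm1}$) or restricts to $\langle a, b^{-1}ab\rangle$ as conjugation by a power of $a$ (a short check: $b\mapsto a^{\pm1}b$ fixes $y$, while $b\mapsto ba^{\mp1}$ sends $y\mapsto a^{\pm1}ya^{\mp1}$); in every such case the automorphism sends $W'$ to an $F(a,b)$-conjugate of $W'$, which has cyclic length $L$. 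This leaves exactly the four ``shears'' $a\mapsto ab^{\pm1}$ and $a\mapsto b^{\pm1}a$. For a shear $\phi$ one computes directly with $W'$ in the displayed form: $\phi$ doubles the length of each maximal $a$-syllable while fixing every $b^{\pm1}$, and upon free reduction exactly one $bb^{-1}$ (or $b^{-1}b$) pair cancels per factor $a^{d_i}b^{-1}a^{c_i}b$, at a boundary of $a^{c_i}$ (respectively $a^{d_i}$), with no further cancellation and none across the cyclic seam; hence $\phi(W')$ is again cyclically reduced, of length $2\sum_i(|d_i|+|c_i|)$. Therefore $|\phi(W')|_{\mathrm{cyc}} - L = \sum_i\bigl(|d_i|+|c_i|-2\bigr)\geqslant 0$, as every $d_i$ and $c_i$ is nonzero.

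Consequently no second-kind Whitehead automorphism decreases $L$, so $W'$ is of minimal length in its $\aut(F(a,b))$-orbit; since that orbit is the orbit of $W$ and $W'\in\langle a, b^{-1}ab\rangle$ is a cyclic shift of $W$, the proposition follows. The one place requiring genuine care is the cancellation bookkeeping for the four shears, together with the attendant check that their images are honestly cyclically reduced (so that word length equals cyclic length there); the rest reduces to one-line verifications. The conceptual content is a dichotomy: a second-kind Whitehead move either is compatible with $\langle a, b^{-1}ab\rangle$ --- acting on it, or on all of $F(a,b)$, as an inner automorphism --- or it strictly lengthens $W'$, since it doubles each of the (length $\geqslant 1$) $a$-syllables while cancellation recovers only $2k$ letters.
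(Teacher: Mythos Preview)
Your proof is correct and takes essentially the same approach as the paper: both reduce to checking that no second-kind Whitehead automorphism shortens a suitably chosen cyclically reduced representative $W'\in\langle a,b^{-1}ab\rangle$, both observe that the multiplier-$a^{\pm1}$ moves (and the inner ones) preserve the conjugacy class of $W'$, and both compute that the four ``shears'' $a\mapsto ab^{\pm1}$, $a\mapsto b^{\pm1}a$ send $W'$ to a word of cyclic length $2\sum_i(|d_i|+|c_i|)\geqslant L$. Your treatment is a little more explicit than the paper's (you actually verify the restriction of each multiplier-$a^{\pm1}$ move to $\langle a,y\rangle$, whereas the paper simply asserts these fix $[W]$), but the argument is the same.
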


\begin{proof}
For a conjugacy class $[U]$ of an element $U\in F(a, b)$, we write $|[U]|:=\min\{|V|\mid V\in [U]\}$. So $|[U]|$ is simply the length of $U$ after cyclic reduction. Therefore, the proposition says in particular that for $W\in\langle a, b^{-1}ab\rangle$, there is no automorphism $\alpha\in\aut(F(a, b))$ such that $|[\alpha(W)]|< |[W]|$.
Clearly the result is true if $[W]$ contains some power of $a$. Therefore, assume that $[W]\cap \langle a\rangle=\emptyset$, and suppose that there exists an automorphism $\alpha\in\aut(F(a, b))$ such that $|[\alpha(W)]|< |[W]|$. We find a contradiction.

By the ``peak reduction'' lemma from Whitehead's algorithm \cite[Proposition I.4.20]{L-S}, there exists a Whitehead automorphism $\beta$ of the second kind (so a Whitehead automorphism which is not a permutation of $\{a, b\}^{\pm1}$) such that $|[\beta(W)]|< |[W]|$. Writing $\gamma_b\in\aut(F(a, b))$ for conjugation by $b$, the only four such Whitehead automorphisms which do not fix $[W]$ are $\beta_{1}\colon a\mapsto ab$, $b\mapsto b$, $\beta_{-1}\colon a\mapsto ab^{-1}$, $b\mapsto b$, $\gamma_b\beta_1$, and $\gamma_b\beta_{-1}$.

We find a contradiction for the automorphisms $\beta_1$ and $\beta_{-1}$; the cases of $\gamma_b\beta_1$ and $\gamma_b\beta_{-1}$ follow immediately as $[\gamma_b\beta_1(W)]=[\beta_1(W)]$ and $[\gamma_b\beta_{-1}(W)]=[\beta_{-1}(W)]$.
Consider a cyclically reduced conjugate $W'$ of $W$ which begins with $a$ or $a^{-1}$ and ends with $b$. Then $W'\in\langle a, b^{-1}ab\rangle$, and we write this word as a reduced word $U(a, b^{-1}ab)$. Let $X, Y$ denote the total number of $x$-terms, $y$-terms respectively in $U(x, y)$, and $\syl(X), \syl(Y)$ the total number of $x$-syllables, $y$-syllables respectively in $U(x, y)$.
Then $|W'|=X+Y+2\syl(Y)$. Now, $[\beta_1(W')]=[U(ba, ab)]$ and $[\beta_{-1}(W')]=[U(ab^{-1}, b^{-1}a)]$, and no free reduction or cyclic reduction happens when forming $U(ba, ab)$ or $U(ab^{-1}, b^{-1}a)$, and so $|[\beta_1(W')]|=2X+2Y=|[\beta_{-1}(W')]|$. Now, clearly $Y\geq\syl(Y)$, while $\syl(X)=\syl(Y)$ as $U(x, y)$ starts with an $x$-syllable and ends with a $y$-syllable, by our choice of $W'$, and so $X\geq\syl(Y)$. Therefore, $2X+2Y\geq X+Y+2\syl(Y)$. However, by assumption $2X+2Y< X+Y+2\syl(Y)$, so we have our promised contradiction.
\end{proof}

We now prove Theorem \ref{thm:flexibilityWithTorsion}.

\begin{theorem}[Theorem \ref{thm:flexibilityWithTorsion}]
\label{thm:flexibilityWithTorsionBODYVERSION}
Let $G$ be a group admitting a two-generator one-relator presentation $\mathcal{P}=\langle a, b\mid R\rangle$ where $R= S^n$ in $F(a, b)$ with $n>1$ maximal and where $S$ is non-trivial and not a primitive element of $F(a, b)$.
The following are equivalent.
\begin{enumerate}
\item\label{flexibilityHyperbolic:1} $G$ has non-trivial \ZC{}-JSJ decomposition.
\item\label{flexibilityHyperbolic:2} There exists a word $T$ of shortest length in the $\aut(F(a, b))$-orbit of $S$ such that $T\in\langle a, b^{-1}ab\rangle$ but $T$ is not conjugate to $[a, b]^{\pm 1}$.
\end{enumerate}
\end{theorem}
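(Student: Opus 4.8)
The plan is to derive the equivalence from \cref{thm:JSJclassificationONEREL} together with \cref{prop:whitehead}, after first splitting off the one case not covered by \cref{thm:JSJclassificationONEREL}, namely when $S$ is conjugate to $[a,b]^{\pm1}$. In that case I would show that both (\ref{flexibilityHyperbolic:1}) and (\ref{flexibilityHyperbolic:2}) fail, so the equivalence holds with both sides false. Indeed, since $n>1$ by hypothesis, \cref{prop:Fuchsian}(\ref{Fuchsian:2}) gives that $G$ is one-ended and Fuchsian, whence $G$ has trivial \Z{}-JSJ decomposition by Convention \ref{conv:JSJ}; and, recalling from the proof of \cref{prop:Fuchsian} that the $\aut(F(a,b))$-orbit of $[a,b]$ consists exactly of the conjugates of $[a,b]^{\pm1}$ \cite[Theorem 3.9]{mks}, every word in the $\aut(F(a,b))$-orbit of $S$ is conjugate to $[a,b]^{\pm1}$, so no word $T$ as in (\ref{flexibilityHyperbolic:2}) can exist.

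For the remaining case, where $S$ is not conjugate to $[a,b]^{\pm1}$ (and is non-empty and non-primitive by hypothesis), \cref{thm:JSJclassificationONEREL} applies, and I would argue as follows. For (\ref{flexibilityHyperbolic:2}) $\Rightarrow$ (\ref{flexibilityHyperbolic:1}), a word $T$ as in (\ref{flexibilityHyperbolic:2}) lies in the $\aut(F(a,b))$-orbit of $S$ and in $\langle a,b^{-1}ab\rangle$, so some $\psi\in\aut(F(a,b))$ satisfies $\psi(S)=T\in\langle a,b^{-1}ab\rangle$; this is condition (\ref{JSJclassificationONEREL:3}) of \cref{thm:JSJclassificationONEREL}, which yields its condition (\ref{JSJclassificationONEREL:1}), i.e.\ (\ref{flexibilityHyperbolic:1}) holds --- note that for this direction neither the minimality of $T$ nor its non-conjugacy to $[a,b]^{\pm1}$ is used. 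For (\ref{flexibilityHyperbolic:1}) $\Rightarrow$ (\ref{flexibilityHyperbolic:2}), \cref{thm:JSJclassificationONEREL} provides $\psi\in\aut(F(a,b))$ with $W:=\psi(S)\in\langle a,b^{-1}ab\rangle$, and I would then apply \cref{prop:whitehead} to $W$ to obtain a cyclic shift $T$ of $W$ which lies in $\langle a,b^{-1}ab\rangle$ and has shortest length in its $\aut(F(a,b))$-orbit. A cyclic shift is a conjugate, so $T$ is the image of $S$ under an automorphism (namely $\psi$ followed by an inner automorphism); hence $T$ lies in the $\aut(F(a,b))$-orbit of $S$ and so has shortest length there, and if $T$ were conjugate to $[a,b]^{\pm1}$ then $S$ would be too, contradicting the assumption of this case. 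Thus $T$ witnesses (\ref{flexibilityHyperbolic:2}).

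The substantive input is \cref{prop:whitehead}: it is precisely what lets me replace an arbitrary element of $\langle a,b^{-1}ab\rangle$ lying in the $\aut(F(a,b))$-orbit of $S$ by one of \emph{minimal} length that is \emph{still} of this form --- minimality on its own would follow from peak reduction, but staying inside $\langle a,b^{-1}ab\rangle$ needs the freedom to pass to a cyclic shift. Beyond that, the proof is bookkeeping layered on \cref{thm:JSJclassificationONEREL}, and the main point needing genuine care is the Fuchsian / $[a,b]^{\pm1}$ boundary case, where one must verify that \emph{both} conditions degenerate simultaneously.
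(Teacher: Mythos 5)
Your proof is correct and follows essentially the same route as the paper's: both directions rest on Theorem \ref{thm:JSJclassificationONEREL} together with Proposition \ref{prop:whitehead}, with \cite[Theorem 3.9]{mks} used to transfer non-conjugacy to $[a,b]^{\pm1}$ between $S$ and $T$. The only difference is organisational --- you isolate the case $S$ conjugate to $[a,b]^{\pm1}$ up front and check both conditions fail, whereas the paper absorbs this into the two implications (deducing ``$S$ not conjugate to $[a,b]^{\pm1}$'' from non-triviality of the JSJ via Proposition \ref{prop:Fuchsian} in one direction, and from the hypothesis on $T$ in the other) --- which changes nothing of substance.
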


\begin{proof}
Suppose (\ref{flexibilityHyperbolic:1}) holds.
Then $G$ is not Fuchsian and so, by Proposition \ref{prop:Fuchsian}, $S$ is not conjugate to $[a, b]^{\pm1}$.
Then by Theorem \ref{thm:JSJclassificationONEREL} there exists an automorphism $\psi\in\aut(F(a, b))$ such that $\psi(S)\in\langle a, b^{-1}ab\rangle$. By Proposition \ref{prop:whitehead}, there exists a cyclic shift $T$ of $\psi(S)$ which has shortest length in its $\aut(F(a, b))$-orbit, and $T\in\langle a, b^{-1}ab\rangle$. Moreover, as $S$ is not conjugate to $[a, b]^{\pm1}$, neither is $T$ \cite[Theorem 3.9]{mks}. Hence, the word $T$ satisfies (\ref{flexibilityHyperbolic:2}).

If (\ref{flexibilityHyperbolic:2}) holds then (\ref{flexibilityHyperbolic:1}) follows from Theorem \ref{thm:JSJclassificationONEREL}.
\end{proof}


\section{Splittings and Friedl--Tillmann polytopes}
\label{sec:torsionFree}
Theorem \ref{thm:KWquote} gives a clear link between \ZC{}-JSJ decompositions and presentations of the form $\mathcal{P}'=\langle x, y\mid\mathbf{t}(x, y^{-1}xy)\rangle$ for subsets $\mathbf{t}(x, y^{-1}xy)$ of $\langle x, y^{-1}xy\rangle$.
In this section we suppose that this JSJ-presentation $\mathcal{P}'$ for a group $G$ is ``close'' to being one-relator (specifically, the generating pair $(x, y)$ also admits a one-relator presentation $\langle x, y\mid S\rangle$), and we prove that the Friedl--Tillmann polytope of $G$ is a straight line (Theorem \ref{thm:JSJFormTF}), and hence that any one-relator presentation for $G$ has this JSJ-form $\langle a, b\mid R_0(a, b^{-1}ab)\rangle$, up to a Nielsen transformation (Lemma \ref{lem:StraightLinePolytopes}).

In Section \ref{sec:RGTheorem} we prove that Theorem \ref{thm:JSJFormTF} applies to \CSA{} groups, and hence to torsion-free hyperbolic groups.
The proof of \cref{lem:StraightLinePolytopes} reviews the notion of a Friedl--Tillmann polytope.

\begin{theorem}
\label{thm:JSJFormTF}
Let $G$ be a torsion-free two-generator one-relator group with $\mathbb{Z}^2$ abelianisation, and let $\mathcal{P}=\langle a, b\mid R\rangle$ be any one-relator presentation of $G$.
Suppose there exists a normal subgroup $N$ of $F(x, y)$ such that:
\begin{enumerate}
\item\label{NEbasic:1} $F(x, y)/N\cong G$,
\item\label{NEbasic:2} $N$ can be normally generated by a single element $R_N\in F(x, y)$,
\item\label{NEbasic:3} There exists a subset $\mathbf{t}_N$ of $\langle x, y^{-1}xy\rangle$ such that $\mathbf{t}_N$ normally generates $N$.
\end{enumerate}
Then the Friedl--Tillmann polytope $\FT$ of $\mathcal{P}$ is a straight line.
\end{theorem}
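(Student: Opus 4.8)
The plan is to compute $\FT(\mathcal P)$ from a presentation of $G$ built out of $N$, using group-invariance of the Friedl--Tillmann polytope, and to exploit that Fox derivatives of elements of $\langle x,y^{-1}xy\rangle$ become ``thin'' after abelianisation. First I would recall the relevant facts: by \cite{HennekeKielak2020} (alternatively by \cite{FriedlLueck2017} together with \cite{Jaikin-ZapirainLopez-Alvarez2018}) the polytope $\FT(\mathcal P)$ depends only on $G$, and equals the universal $L^2$-torsion polytope of $G$. Concretely, for any one-relator presentation $\langle p,q\mid W\rangle$ of $G$ with $W\in F(p,q)'$, computing the torsion of the $\mathcal D(G)$-chain complex of the presentation $2$-complex shows that, up to translation (and, depending on conventions, a central reflection), $\FT$ is the Minkowski difference $N(\overline{\partial W/\partial q})\boxminus[0,\overline p]$, where $N(-)$ denotes the Newton polytope of the image in $\mathbb Z[H_1(G;\mathbb Z)]$ and $\overline p$ is the class of $p$; this uses that $G$ is $L^2$-acyclic (true for torsion-free one-relator groups, and where \cite{Jaikin-ZapirainLopez-Alvarez2018} enters), so that the torsion is a unit. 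Since ``being contained in a line'' is preserved by the $GL_2(\mathbb Z)$ relating different bases of $H_1(G)\cong\mathbb Z^2$, it suffices to exhibit one one-relator presentation of $G$ whose polytope, computed this way, is at most one-dimensional.

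By hypotheses \ref{NEbasic:1} and \ref{NEbasic:2} we have $G=\langle x,y\mid R_N\rangle$. Write $\overline x,\overline y$ for the classes in $H_1(G;\mathbb Z)$, so that $\mathbb Z[H_1(G)]=\mathbb Z[\overline x^{\pm1},\overline y^{\pm1}]$, a UFD. The key claim is that $\overline{\partial R_N/\partial y}$ equals a unit of this ring times a Laurent polynomial in $\overline x$ alone. Granting this, $N(\overline{\partial R_N/\partial y})$ is a segment parallel to $\overline x$, hence so is $\FT=N(\overline{\partial R_N/\partial y})\boxminus[0,\overline x]$ (a Minkowski difference of two segments parallel to $\overline x$, non-empty as $\FT$ is a genuine polytope); transporting back along the change of basis then shows $\FT(\mathcal P)$ is a straight line.

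I would prove the claim in three steps. (i) The Fox-derivative chain rule for the substitution $w\mapsto y^{-1}xy$ gives, for any word $U(x,w)$, the identity $\partial U/\partial y=(\partial U/\partial w)'\cdot y^{-1}(x-1)$, where the prime denotes the substitution; since $x$ and $y^{-1}xy$ have the same class $\overline x$, the image of this in $\mathbb Z[H_1(G)]$ is $\overline y^{-1}(\overline x-1)\,q(\overline x)$ for some one-variable Laurent polynomial $q$. Applying this to the elements $t_i$ of the set $\mathbf t_N$ from \ref{NEbasic:3} (which we may take finite, since a finite subset already normally generates $N$ as $N$ is the normal closure of the single element $R_N$), every $\overline{\partial t_i/\partial y}$ lies in $\overline y^{-1}\mathbb Z[\overline x^{\pm1}]$. (ii) As $G$ is torsion-free we may take $R_N$ to be not a proper power, so by Lyndon's Identity Theorem the relation module $N/[N,N]$ is free of rank one over $\mathbb Z[G]$ on the class $[R_N]$; since $\mathbf t_N$ normally generates $N$, its classes generate $N/[N,N]$, say $[t_i]=\lambda_i[R_N]$ with $\sum_i\mu_i\lambda_i=1$ in $\mathbb Z[G]$. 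The map $[r]\mapsto(\partial r/\partial y\bmod N)$ is $\mathbb Z[G]$-linear on $N/[N,N]$; composing with $\mathbb Z[G]\to\mathbb Z[H_1(G)]$ and inserting $[t_i]=\lambda_i[R_N]$ yields $\overline{\partial t_i/\partial y}=\overline{\lambda_i}\cdot\overline{\partial R_N/\partial y}$ for all $i$. (iii) Therefore $\overline{\partial R_N/\partial y}$ divides every $\overline{\partial t_i/\partial y}$, hence it divides their greatest common divisor; this gcd is nonzero --- otherwise all $\overline{\partial t_i/\partial y}$, and hence $\overline{\partial R_N/\partial y}$ itself (via $\sum_i\mu_i\lambda_i=1$), would vanish, contradicting $L^2$-acyclicity --- and by (i) it is, up to a unit, a nonzero element of $\mathbb Z[\overline x^{\pm1}]$. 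In the UFD $\mathbb Z[\overline x^{\pm1},\overline y^{\pm1}]$, every divisor of a nonzero element of $\mathbb Z[\overline x^{\pm1}]$ is a unit times an element of $\mathbb Z[\overline x^{\pm1}]$; this proves the claim.

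The main obstacle is the reconciliation carried out in step (iii): because $\mathbf t_N$ need not consist of a single relator, one cannot simply read $\FT$ off it by Fox calculus --- the polytope is defined only from a one-relator presentation --- so the argument has to route through the relation-module identities $[t_i]=\lambda_i[R_N]$, which are available precisely because $G$ is torsion-free (Lyndon's theorem), together with the group-invariance of $\FT$. A secondary point that needs care is pinning down the correct $L^2$-torsion normalisation --- the Minkowski-difference correction term $[0,\overline x]$, and the fact that $L^2$-acyclicity of $G$ forces $\overline{\partial R_N/\partial y}\neq0$ --- after which the rest is a formal computation in the UFD $\mathbb Z[\overline x^{\pm1},\overline y^{\pm1}]$.
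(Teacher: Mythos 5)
There is a genuine gap, and it occurs at the very first step, before the relation-module argument: you identify $\FT$ with the Newton polytope of the \emph{image of the Fox derivative in} $\mathbb{Z}[H_1(G;\mathbb{Z})]$, i.e.\ with an Alexander-polynomial-level invariant. That is not what the Friedl--Tillmann (equivalently, Friedl--L\"uck $L^2$-torsion) polytope is: the polytope attached to a non-zero element $z\in\mathbb{Z}G$ is the convex hull of the image in $H_1(G;\mathbb{R})$ of the \emph{support of $z$ in $\mathbb{Z}G$}, so the only cancellations that can shrink it are cancellations happening in $\mathbb{Z}G$ itself, not those created by pushing the coefficients down to the commutative ring $\mathbb{Z}[\overline{x}^{\pm1},\overline{y}^{\pm1}]$. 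The two objects genuinely differ: for instance if $R$ is a non-trivial commutator of two elements of $F(a,b)'$ (so $R\in F''$), both abelianised Fox derivatives vanish identically, while the Friedl--Tillmann polytope of such a presentation is a perfectly good (typically two-dimensional) polytope. This also undercuts your non-vanishing step in (iii): $L^2$-acyclicity gives non-vanishing of the Fox derivatives over $\mathcal{D}(G)$ (equivalently in $\mathbb{Z}G$), but says nothing about their images in $\mathbb{Z}[H_1(G)]$, which, as the example shows, can be zero. Consequently the UFD/gcd computation in $\mathbb{Z}[\overline{x}^{\pm1},\overline{y}^{\pm1}]$ proves a statement about the Alexander-type polytope, not about $\FT$, and the theorem does not follow.

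The good news is that your steps (i)--(ii) are sound and can be salvaged by never abelianising the coefficients: Lyndon's Identity Theorem gives $[t_i]=\lambda_i[R_N]$ in the relation module, hence $\partial t_i/\partial y=\lambda_i\,\partial R_N/\partial y$ already in $\mathbb{Z}G$; since $G$ is torsion-free and satisfies the Atiyah conjecture, the support polytope is additive under multiplication in $\mathcal{D}(G)^{\times}$, so the polytope of $\partial t_i/\partial y$ (which lies in a strip of $y$-width one, because $t_i\in\langle x,y^{-1}xy\rangle$) equals the polytope of $\lambda_i$ plus that of $\partial R_N/\partial y$; choosing $i$ with $\lambda_i\neq 0$ (possible, else all $t_i\in[N,N]$ and they could not normally generate $N$) bounds the $y$-width of the polytope of $\partial R_N/\partial y$ by one, and subtracting the unit segment coming from $1-y$ then forces $\FT$ onto a line. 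That corrected route is genuinely different from the paper's proof, which instead builds an aspherical $3$-complex from the two presentations and splits its chain complex to compute $P^{(2)}(G)$ directly, avoiding Lyndon's theorem altogether; but as written your proposal computes the wrong polytope.
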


\begin{proof}
Let $X$ denote the Cayley $2$-complex associated to the one-relator presentation $\langle x, y\mid R_N\rangle$ of $G$. Since $G$ is torsion-free, $X$ is aspherical \cite{Lyndon1950}. 
As $G$ is finitely presentable, there exists a finite subset $\mathbf{t}_N^{\prime}$ of $\mathbf{t}_N$ which also normally generates $N$.
Therefore, $G$ admits a finite presentation
\[\langle x, y\mid \mathbf{t}_N^{\prime}(x, y^{-1}xy), R_N\rangle,\]
and let $Y$ be the Cayley $2$-complex of this presentation.
Note that $X$ is a subcomplex of $Y$. Moreover, since every relator $r$ in $\mathbf{t}_N^{\prime}(x, y^{-1}xy)$ corresponds to a closed loop $\gamma_r$ in $X$, we can form a $3$-complex $Z$ from $Y$ by $G$-equivariantly gluing in $3$-cells, one $G$ orbit for each element in $\mathbf{t}_N^{\prime}(x, y^{-1}xy)$, in such a way that the boundary of the $3$-cell is the union of two $2$-discs glued along their boundary, where the first disc fills in the relator $r$ in $Y$, and the second fills in the loop $\gamma_r$ in $X$. It is clear that $Z$ is a $3$-dimensional cofinite $G$-complex which retracts onto $X$, and hence is aspherical.

We will now use $Z$, together with the fact that $G$ satisfies the Atiyah conjecture \cite{Jaikin-ZapirainLopez-Alvarez2018} and that it is $L^2$-acyclic \cite{DicksLinnell2007}, and compute the $L^2$-torsion polytope $P^{(2)}(G)$ of $G$, as defined by Friedl--L\"uck~\cite{FriedlLueck2017}. Note that, in general, $P^{(2)}$ is not necessarily a polytope, but a formal difference of two polytopes. Nevertheless, when $G$ is a one-relator group then $P^{(2)}(G)$ coincides with the Friedl--Tillmann polytope by \cite[Remark 5.5]{FriedlLueck2017}, and is a single polytope.

We start by looking at the cellular chain complex $C_\bullet$ of $Z$. For every $n$, the $n$-chains $C_n$ form a free $\mathbb Z G$-module.
We pick a natural cellular basis for $C_\bullet$, namely: the vertex $1$ forms the basis for $C_0$; the edges connecting $1$ to $y$ and $x$ form the basis of $C_1$; the discs filling-in $R_N$ in $X$ and the relators from $\mathbf{t}_N^{\prime}(x, y^{-1}xy)$ in $Y$ form the basis for $C_2$; and finally the $3$-cells attached to the basic discs filling-in relators from $\mathbf{t}_N^{\prime}(x, y^{-1}xy)$ form the basis of $C_3$. 

We may now identify $C_\bullet$ with
\[
 \mathbb Z G^\alpha \to \mathbb Z G^{\alpha+1} \to \mathbb Z G^2 \to \mathbb Z G
\]
where $\alpha = \vert \mathbf{t}_N^{\prime}(x, y^{-1}xy) \vert$.

\smallskip
Before proceeding any further, we need to deal with the special case in which the Fox derivatives $\frac{\partial r}{\partial x}$ are zero for all $r \in \mathbf{t}_N^{\prime}(x, y^{-1}xy)$. The fundamental formula of Fox calculus \cite{Fox1953} tells us that
\[
 \dfrac{\partial r}{\partial x}(1-x) + \dfrac{\partial r}{\partial y}(1-y) = 1-r = 0
\]
in $\mathbb Z G$.
Hence $\frac{\partial r}{\partial y}(1-y) = 0$, which forces $\frac{\partial r}{\partial y} = 0$, as $G$ satisfies the Atiyah conjecture and hence $\mathbb Z G$ has no non-trivial zero divisors. (A careful reader might observe that for this argument we need $y \neq 1$ in $G$, which is true since otherwise the abelianisation of $G$ would not be $\mathbb Z^2$.) In fact, we can conclude that $\mathbb Z G$ does not have non-trivial zero-divisors from an earlier work of Lewin--Lewin~\cite{LewinLewin1978}. Now, since both Fox derivatives of $r$ vanish, the $1$-cycle corresponding to $r$ is trivial. 

Since $R_N$ lies in the normal closure of $\mathbf{t}_N^{\prime}(x, y^{-1}xy)$, the $1$-cycle given by $R_N$ must also be trivial. Hence, the differential $C_2 \to C_1$ is zero. We know that $C_\bullet$ is $L^2$-acyclic. For torsion-free groups satisfying the Atiyah conjecture, like $G$, this amounts to saying that $\mathcal D(G) \otimes_{\mathbb Z G} C_\bullet$ is acyclic, where $\mathcal{D}(G)$ is the \emph{Linnell skew-field}, a skew-field which contains $\mathbb Z G$. In our case, if the differential $C_2 \to C_1$ is trivial, then tensoring $C_\bullet$ with $\mathcal D(G)$ yields the chain
\[
 \mathcal D(G)^\alpha \longrightarrow \mathcal D(G)^{\alpha+1} \overset{0}{\longrightarrow} \mathcal D(G)^2 \longrightarrow \mathcal D(G)
\]
which cannot be acyclic for dimension reasons. This is a contradiction.

Let $r \in \mathbf{t}_N^{\prime}(x, y^{-1}xy)$ be such that the Fox derivative $\frac{\partial r}{\partial x}$ is not zero.
Let $d$ denote the $2$-chain given by the disc filling-in $r$ in $Y$; note that $d$ is a basis element of $C_2$.
We now look at the commutative diagram with exact columns
\begin{equation}
\tag{$\dagger$}
\begin{aligned}
 \label{main comm diag}
 \xymatrix{
 0 \ar[d] \ar[r] & 0 \ar[d] \ar[r] & 0 \ar[d] \ar[r] & 0 \ar[d] \\
 0 \ar[d] \ar[r] & \langle d \rangle \ar[d] \ar[r] & \mathbb Z G^2 \ar[d]^{\mathrm{id}} \ar[r] & \mathbb Z G \ar[d]^{\mathrm{id}}\\
 \mathbb Z G^\alpha \ar[r]\ar[d]_{\mathrm{id}} & \mathbb Z G^{\alpha+1} \ar[r] \ar[d] & \mathbb Z G^2 \ar[r] \ar[d] & \mathbb Z G \ar[d] \\
 \mathbb Z G^\alpha\ar[r] \ar[d] & \mathbb Z G^{\alpha+1}/\langle d\rangle \ar[r] \ar[d] & 0 \ar[d] \ar[r] & 0 \ar[d]\\
 0 \ar[r] & 0 \ar[r] & 0 \ar[r] & 0 
 }
 \end{aligned}
\end{equation}
where $\langle d \rangle$ denotes the $\mathbb Z G$-span of $d$. Note that \eqref{main comm diag} is really a short exact sequence of chain complexes.

By assumption, the middle row of \eqref{main comm diag} is $L^2$-acyclic.
We claim that so is the second row, which we will denote by $B_\bullet$. To prove the claim, we need to look at the chain complex $B_\bullet$ in more detail:
\begin{equation*}
\begin{aligned}
\label{chain cplx top row}
 \xymatrix{
 \langle d \rangle \ar[rr]^{\Big( \dfrac{\partial r}{\partial y} \ \dfrac{\partial r}{\partial x} \Big)} & & \mathbb Z G^2 \ar[rr]^{\left( \begin{array}{c}
                                1-y \\ 1-x
                                \end{array} \right)
 } & & \mathbb Z G 
 }
\end{aligned}
\end{equation*}
Since $G$ is not cyclic, we have $1-y \neq 0$; we also have $\frac{\partial r}{\partial x} \neq 0$ by assumption. Now $B_\bullet$
fits into the exact sequence of chain complexes
\begin{equation}
\tag{$\ddagger$}
\begin{aligned}
\label{sec comm diag}
 \xymatrix{
 0 \ar[d] \ar[r] & 0 \ar[d] \ar[r] & 0 \ar[d] \\
0 \ar[d] \ar[r] & \mathbb Z G \ar[d] \ar[r]^{(1-y)} & \mathbb Z G \ar[d]^{\mathrm{id}} \\
 \langle d \rangle \ar[d]_{\mathrm{id}} \ar[r] & \mathbb Z G^2 \ar[d] \ar[r] & \mathbb Z G \ar[d]\\
 \langle d \rangle \ar[d] \ar[r]^{(\frac{\partial r}{\partial x})} & \mathbb Z G \ar[d] \ar[r] & 0 \ar[d]\\
 0 \ar[r] & 0 \ar[r] & 0 
 }
 \end{aligned}
\end{equation}
where the middle column represents the inclusion of the first coordinate into $\mathbb Z G^2$ and then the projection onto the second coordinate.
Since both horizontal differentials labelled in the diagram are multiplications by non-zero elements of $\mathbb Z G$, they are invertible over $\mathcal{D}(G)$, and hence the second and fourth rows of this commutative diagram are exact upon tensoring with $\mathcal{D}(G)$. By \cite[Lemma 2.9]{FriedlLueck2017}, the middle row also becomes exact upon tensoring with $\mathcal{D}(G)$. This proves the claim.

We are now back to examining \eqref{main comm diag}. We have just shown that the second row is $L^2$-acyclic, and hence \cite[Lemma 2.9]{FriedlLueck2017} tells us that so is the fourth row, and that
\[
 P^{(2)}(C_\bullet) = P^{(2)}(B_\bullet) + P^{(2)}(D_\bullet)
\]
where $D_\bullet$ denotes the fourth row of \eqref{main comm diag}.
We can split $P^{(2)}(B_\bullet)$ further using diagram \eqref{sec comm diag} and \cite[Lemma 1.9]{FriedlLueck2017}, and obtain 
\[
 P^{(2)}(B_\bullet) = P - Q
\]
where $-P$ is the $L^2$-torsion polytope of
\[
 \mathbb Z G \overset{1-y}{\longrightarrow} \mathbb Z G
\]
and $-Q$ of
\[
 \mathbb Z G \overset{\frac{\partial r}{\partial x}}{\longrightarrow} \mathbb Z G
\]
Now we need to recall how the polytope $P^{(2)}$ is actually constructed, at least in the above (simple) cases. When considering a chain complex of the form
\[
 \mathbb Z G \overset{z}{\longrightarrow} \mathbb Z G
\]
with $z \in \mathbb Z G \smallsetminus \{0\}$, the polytope is obtained by first taking the support $\mathrm{supp}(z) \subseteq G$, then taking the image of this set in $H_1(G;\mathbb R)$, and then taking the convex hull. At the end the polytope is given a sign, negative in this situation, and in general depending on whether the single non-trivial differential starts in an even or an odd dimension.

With the construction in mind, it is now immediate that, since $r$ is a word in $x$ and $y^{-1}xy$, the polytope $P$ is contained within the strip $[-1,0] \times \mathbb R$. The polytope $Q$ is obtained from $1-y$, and hence it is the segment $[0,1]\times\{0\}$.

Since $D_\bullet$ is concentrated around a single differential from odd to even degree chains, $P^{(2)}(D_\bullet) = -R$ where $R$ is some polytope. Hence
\[
 P^{(2)}(G) =P^{(2)}(C_\bullet) = P - Q - R
\]
Since $G$ is a one-relator group, $P^{(2)}(G)$ is actually a single polytope. Hence, in particular, $P-Q$ must be a single polytope, and therefore it must lie on the line $\{-1\} \times \mathbb R$. Subtracting a further polytope from such a polytope does not alter this fact, and hence we have finished the proof.
\end{proof}

Next, we prove the converse to Theorem \ref{thm:JSJFormTF}.
This is applied to prove that (\ref{flexibilityHyperbolic:3}) implies (\ref{flexibilityHyperbolic:2}) in the proof of Theorem \ref{thm:flexibilityHyperbolicBODYVERSION}.

\begin{lemma}
\label{lem:StraightLinePolytopes}
Let $G$ be a group admitting a two-generator one-relator presentation $\mathcal{P}=\langle a, b\mid R\rangle$ with $R\in F(a, b)'\setminus\{1\}$.
The Friedl--Tillmann polytope of $\mathcal{P}$ is a straight line if and only if there exists a word $T$ of shortest length in the $\aut(F(a, b))$-orbit of $R$ such that $T\in\langle a, b^{-1}ab\rangle$.
\end{lemma}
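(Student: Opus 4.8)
The plan is to prove the two directions separately, after first reducing to the case that $R$ is not a proper power. Write $R=S^{n}$ with $n\geqslant 1$ maximal; then $S\in F(a,b)'\setminus\{1\}$ is not a proper power (if $S^{n}\in F(a,b)'$ then $S\in F(a,b)'$, since $\mathbb{Z}^{2}$ is torsion-free), so $\langle a,b\mid S\rangle$ is torsion-free. Both sides of the asserted equivalence are unchanged on replacing $R$ by $S$: on the polytope side the loop traced by $S^{n}$ is the loop traced by $S$ run $n$ times, so it has the same convex hull and encloses the same unit squares, and the construction of \cite{Friedl2015Two} returns the same polytope for $\langle a,b\mid S^{n}\rangle$ as for $\langle a,b\mid S\rangle$; on the other side, roots in a free group are unique and $\langle a,b^{-1}ab\rangle$ is root-closed in $F(a,b)$ (it is a free factor of the kernel of the homomorphism $F(a,b)\to\mathbb{Z}$ sending $b\mapsto 1$, $a\mapsto 0$, and free factors are isolated), so the shortest words in the $\aut(F(a,b))$-orbit of $S^{n}$ are exactly the $n$-th powers of the shortest words in the orbit of $S$, and such an $n$-th power lies in $\langle a,b^{-1}ab\rangle$ iff its root does. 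Hence I may assume $G$ is torsion-free.

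For the ``if'' direction, given a shortest word $T$ in the $\aut(F(a,b))$-orbit of $R$ with $T\in\langle a,b^{-1}ab\rangle$, I would note that $G\cong F(a,b)/\langle\langle T\rangle\rangle$ because $T$ is $\aut(F(a,b))$-equivalent to $R$, write $T=T(a,b^{-1}ab)$ as a word in $a$ and $b^{-1}ab$, and apply Theorem \ref{thm:JSJFormTF} to $G$, to the given presentation $\mathcal{P}=\langle a,b\mid R\rangle$, and to the normal subgroup $N=\langle\langle T(x,y^{-1}xy)\rangle\rangle$ of $F(x,y)$: hypotheses (\ref{NEbasic:1})--(\ref{NEbasic:3}) hold with $R_{N}=T(x,y^{-1}xy)$ and $\mathbf{t}_{N}=\{T(x,y^{-1}xy)\}$, so the theorem gives that the Friedl--Tillmann polytope of $\mathcal{P}$ is a straight line.

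For the ``only if'' direction, suppose the Friedl--Tillmann polytope of $\mathcal{P}$ is contained in a line. It is a rational polytope and a group invariant \cite{HennekeKielak2020}, and applying $\phi\in\aut(F(a,b))$ to the relator transforms it by the induced element $\phi_{\ast}\in\operatorname{GL}_{2}(\mathbb{Z})$ (this follows formally from group-invariance and naturality); since $\operatorname{GL}_{2}(\mathbb{Z})$ acts transitively on primitive integer vectors and lifts to $\aut(F(a,b))$, I can choose $\phi$ so that the polytope of $\langle a,b\mid \phi(R)\rangle$ has zero extent in the $b$-direction. Next I would extract from the construction of \cite{Friedl2015Two} the lower bound: for any $W\in F(a,b)'\setminus\{1\}$ the $b$-extent of the polytope of $\langle a,b\mid W\rangle$ is at least the $b$-extent of the loop traced by $W$ minus one, since a topmost and a bottommost vertex of the convex hull of that loop yield marked points whose $b$-coordinates differ by exactly that amount. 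As $\phi(R)\in F(a,b)'\setminus\{1\}$ uses at least one $b$, its loop has $b$-extent at least $1$, so the vanishing of the polytope's $b$-extent forces it to be exactly $1$; replacing $\phi(R)$ by a cyclically reduced conjugate (which changes neither the group nor the polytope) and then cyclically permuting it to start at the top of this height-one strip, the trace splits into runs $a^{k}$ at the top level separated by excursions $b^{-1}a^{m}b=(b^{-1}ab)^{m}$, so this conjugate of $\phi(R)$ lies in $\langle a,b^{-1}ab\rangle$. Hence $R$ lies in the $\aut(F(a,b))$-orbit of some $V\in\langle a,b^{-1}ab\rangle$, and Proposition \ref{prop:whitehead} then produces a cyclic shift of $V$ that is a shortest word in this orbit and still lies in $\langle a,b^{-1}ab\rangle$; this is the required word $T$.

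The main obstacle is the ``only if'' direction, and within it the two facts read off from the Friedl--Tillmann construction: the $\operatorname{GL}_{2}(\mathbb{Z})$-equivariance used to rotate the line to the horizontal, and the lower bound for the $b$-extent of the polytope in terms of the traced loop. Neither is deep, but both need care with the precise definition of the marked polytope. An alternative for the lower bound would be to invoke Friedl--Tillmann's correspondence between marked faces of the polytope and splittings of $G$ and feed the resulting splitting into Theorem \ref{thm:KWquote}, but this does not obviously shorten the argument.
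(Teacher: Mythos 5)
Your proof is correct, and its ``only if'' direction is essentially the argument of the paper: use invariance of the polytope to precompose with an automorphism realising a suitable element of $\operatorname{GL}_2(\mathbb{Z})$ so that the line becomes axis-parallel, observe (via the fact that the convex hull of the traced loop is the Minkowski sum of the Friedl--Tillmann polytope with a unit square) that the loop is then confined to a strip of width one, read off that a cyclic conjugate of the relator lies in $\langle a, b^{-1}ab\rangle$, and finish with Proposition \ref{prop:whitehead}; the paper is, if anything, terser than you about the $\operatorname{GL}_2(\mathbb{Z})$-equivariance, so no complaint there. Where you genuinely diverge is the ``if'' direction. The paper proves it by the same elementary picture run backwards: if the group is presented by a relator in $\langle a, b^{-1}ab\rangle$, the traced loop lies in a unit-width strip, hence so does the hull, and the Minkowski-sum description forces the polytope to be a segment --- two lines, no machinery. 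You instead reduce to the case where $R$ is not a proper power (re-deriving Lemma \ref{lem:AutsAndPowers} and the isolation of $\langle a, b^{-1}ab\rangle$ in $F(a,b)$ along the way, both correctly) and then invoke Theorem \ref{thm:JSJFormTF} with $N=\langle\langle T(x,y^{-1}xy)\rangle\rangle$, $R_N=T(x,y^{-1}xy)$ and $\mathbf{t}_N=\{R_N\}$. This is valid, and it has the mild virtue of making that implication a formal corollary of Theorem \ref{thm:JSJFormTF} without appealing again to invariance of the polytope under change of presentation; but it is much heavier (the full $L^2$-torsion computation) and the torsion-free reduction exists only to meet that theorem's hypotheses, a detour the paper's direct combinatorial argument avoids entirely.
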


\begin{proof}
Suppose that the Friedl--Tillmann polytope is a line. By applying a suitable Nielsen automorphism, we may assume that $G$ is given by the presentation
$
 \langle x,y \mid S \rangle
$
and that the polytope lies on the line $\{0\} \times \mathbb R$.

To construct the Friedl--Tillmann polytope, we first define an auxiliary polytope $P$ by tracing the closed loop the word $S$ gives in the Cayley graph of the free part of the abelianisation of $G$ taken with respect to the image of $\{x,y\}$ as a generating set, and then taking the convex hull of the image of the loop in $H_1(G;\mathbb R)$. Then we observe that $P$ has to be the Minkowski sum of another polytope, say $P'$, and the square $[-1,0] \times [-1,0]$. The polytope $P'$ is precisely the Friedl--Tillmann polytope. Note that the polytope is only well-defined up to translation, so it does not matter which square we choose.
(When $G$ is torsion-free, the polytope $P'$ coincides with the $L^2$-torsion polytope constructed above.)

Now, we know that $P'$ is a line, which forces $P$ to lie inside the strip $[-1,0] \times \mathbb R$. Hence, the loop given by $S$ must also lie in this strip. Since $S \in F(x,y)'$, it can be written as a product of right conjugates of $x$ by powers of $y$. We now see that the only conjugates of $x$ appearing can be $x$ and $y^{-1}xy$. Therefore, there exists a word $T'$ in the $\aut(F(a, b))$-orbit of $R$ such that $T'\in\langle a, b^{-1}ab\rangle$, and so by Proposition \ref{prop:whitehead} there exists a word $T$ of shortest length in the $\aut(F(a, b))$-orbit of $R$ with $T\in\langle a, b^{-1}ab\rangle$, as required.

Now suppose that there exists a word $T$ of shortest length in the $\aut(F(a, b))$-orbit of $R$ such that $T\in\langle a, b^{-1}ab\rangle$. Then we may assume that $G$ is given by a presentation $\langle a, b \mid T' \rangle$ with $T'\in\langle a, bab^{-1}\rangle$. It follows that the loop given by $T$ must lie inside the strip $[-1,0] \times \mathbb R$, and so $P$ also lies inside this strip. As $P$ is the Minkowski sum of $P'$ and the square $[-1,0] \times [-1,0]$, we see that $P'$ is a straight segment.
\end{proof}

If the polytope is a single point then the conditions on the relator are much stronger. We use this lemma in the proof of Theorem \ref{thm:flexibilityHyperbolic}.

\begin{lemma}
\label{lem:SinglePointPolytopes}
Let $G$ be a group admitting a two-generator one-relator presentation $\mathcal{P}=\langle a, b\mid R\rangle$ with $R\in F(a, b)'\setminus\{1\}$.
The Friedl--Tillmann polytope of $\mathcal{P}$ is a point if and only if $R$ is conjugate to $[a, b]^k$ for some $k\in\mathbb{Z}$.
\end{lemma}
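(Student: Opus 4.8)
The plan is to exploit the same Minkowski-sum picture used in the proof of Lemma \ref{lem:StraightLinePolytopes}, but now push it to the extreme case of a zero-dimensional polytope. Recall that, after a Nielsen transformation, $G$ has a presentation $\langle x, y \mid S\rangle$ with $S \in F(x,y)'$, and the Friedl--Tillmann polytope $P'$ is determined by $P = P' + \big([-1,0]\times[-1,0]\big)$, where $P$ is the convex hull in $H_1(G;\mathbb{R}) \cong \mathbb{R}^2$ of the loop traced by $S$. If $P'$ is a single point, then $P$ is a translate of the unit square $[-1,0]\times[-1,0]$; in particular $P$ has area $1$, so the loop traced by $S$ in $\mathbb{R}^2$ has convex hull of area $1$. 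Since $S \in F(x,y)'$ the loop is a closed lattice loop, and its signed area (which equals the image of $S$ under the abelianisation-then-signed-area map, i.e. a suitable evaluation related to the exponent-sum pairing) is a fixed integer, invariant under the choice of presentation. The forward direction then reduces to: a closed lattice loop with convex hull exactly a unit square must wind once around that square, and an elementary argument (e.g. via the Magnus embedding / the abelianised Fox derivatives, or just directly analysing which right-conjugates of $x$ by powers of $y$ can appear so that the loop stays in a unit-square-width strip in both coordinate directions) forces $S$ to be conjugate to $[x,y]^{\pm 1}$, hence $R$ conjugate to $[a,b]^{\pm 1} = [a,b]^k$ with $k = \pm 1$.

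For the converse, suppose $R$ is conjugate to $[a,b]^k$. If $k = 0$ then $R = 1$, excluded. Otherwise the presentation $\langle a, b \mid [a,b]^k\rangle$ presents a group with $\mathbb{Z}^2$ abelianisation, and one computes the Friedl--Tillmann polytope directly: tracing $[a,b]^k$ gives a loop whose convex hull is (a translate of) the unit square, traversed $|k|$ times; taking the Minkowski-sum decomposition $P = P' + \big([-1,0]\times[-1,0]\big)$ forces $P'$ to be a single point. (Equivalently, via the $L^2$-torsion computation: for $G = \langle a, b\mid [a,b]^k\rangle$ one has $G \cong \mathbb{Z}^2$ when $k=\pm1$ and a central extension otherwise, and the relevant Fox derivative $\partial [a,b]^k / \partial a$ has monomial-modulo-units support in $H_1$, yielding a point.) Either computation is routine.

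The main obstacle is the forward direction: showing that the unit-square hypothesis on the convex hull of the lattice loop of $S$ genuinely pins $S$ down to $[x,y]^{\pm1}$ up to conjugacy, rather than merely constraining its convex hull. The clean way to handle this is to combine the strip constraint in \emph{both} coordinate directions: from $P \subseteq [-1,0]\times\mathbb{R}$ (width $1$ in the $x$-direction) Lemma \ref{lem:StraightLinePolytopes}'s argument already shows $S$, up to conjugacy and cyclic reduction, is a word $U(x, y^{-1}xy)$ in $\langle x, y^{-1}xy\rangle$; applying the symmetric argument with the roles of $x$ and $y$ swapped (equivalently, using that $P$ also has width $1$ in the $y$-direction) shows $S$ is simultaneously expressible in $\langle y, x^{-1}yx\rangle$. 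A short combinatorial analysis of words lying in both subgroups, together with the fact that $S$ is nontrivial and the convex hull is genuinely $2$-dimensional (so $S \notin \langle x\rangle$ and $S \notin \langle y\rangle$), forces $S$ conjugate to $[x,y]^{\pm1}$. One must be slightly careful that the Nielsen transformation used to normalise the polytope does not change the conjugacy class of $R$ in an uncontrolled way, but since Nielsen transformations of $F(a,b)$ send $[a,b]$ to a conjugate of $[a,b]^{\pm1}$ (as $[a,b]$ generates the unique conjugacy class of its kind — cf. \cite[Theorem 3.9]{mks}), the conclusion transports back to the original presentation.
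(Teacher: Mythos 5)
Your overall strategy coincides with the paper's: use the Minkowski-sum decomposition $P = P' + \bigl([-1,0]\times[-1,0]\bigr)$ from Lemma \ref{lem:StraightLinePolytopes}, so that a point polytope confines the traced loop of the (cyclically reduced) relator to a unit square, and handle the converse by a direct trace computation. However, your forward direction asserts something strictly stronger than the lemma, and that stronger assertion is false. You claim that a closed lattice loop whose convex hull is exactly a unit square ``must wind once around that square'', and that your combinatorial analysis forces $S$ conjugate to $[x,y]^{\pm 1}$, i.e.\ $k=\pm1$. This contradicts your own converse computation: for $R=[a,b]^2$ the traced loop winds twice around the unit square, its convex hull is still that square, and the Friedl--Tillmann polytope is a point, yet $R$ is not conjugate to $[a,b]^{\pm1}$ (uniqueness of roots in free groups). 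Likewise, for every $k$ the word $[x,y]^k$ has a conjugate in $\langle x, y^{-1}xy\rangle$ and a conjugate in $\langle y, x^{-1}yx\rangle$, so simultaneous membership in the two subgroups cannot pin down $k=\pm1$; the ``elementary argument'' and ``short combinatorial analysis'' you defer to cannot exist, because the statement they are supposed to prove is false.

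The repair is to prove only what the lemma asserts: a cyclically reduced word in $F(x,y)'$ whose trace is confined to a unit square is forced, letter by letter (no letter may leave the square and no cancellation occurs), to be a cyclic permutation of $[x,y]^{\pm k}$ for some $k\geqslant 1$, so $S$ is conjugate to $[x,y]^k$ for some non-zero $k$; transporting back through the Nielsen automorphism (as you correctly note one must) gives $R$ conjugate to $[a,b]^k$. With this correction your argument is essentially the paper's proof. A minor additional caveat: your parenthetical $L^2$/Fox-derivative justification of the converse is shaky, since the support of $\partial [a,b]^k/\partial a$ in $H_1$ is a segment, not a point; the tracing computation you give first is the one that is needed, and it is correct.
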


\begin{proof}
Suppose the Friedl--Tillmann polytope $P'$ of $\mathcal{P}$ is a point. As in the proof of Lemma \ref{lem:StraightLinePolytopes}, we may assume that $G$ is given by the presentation $\langle x,y \mid S \rangle$ and that the polytope lies on the line $\{0\} \times \mathbb R$. As $P'$ is a point, the polytope $P$ is the square $[0,1] \times [0, 1]$ (up to translation). This means precisely that $S = [a, b]^k$ for some non-zero $k\in\mathbb{Z}$.

Now suppose that $R$ is conjugate to $[a, b]^k$ for some $k\in\mathbb{Z}$. Then we may assume that $G$ is given by the presentation $ \langle a, b \mid (aba^{-1}b^{-1})^k \rangle$, and this presentation is easily seen to have Friedl--Tillmann polytope a point.
\end{proof}

\section{The main theorem}
\label{sec:RGTheorem}
In this section we prove our main theorem, Theorem \ref{thm:flexibilityHyperbolic}.
We require the results proven in Section \ref{sec:background}, which were for \CSA{} groups.
These results are applicable to the \BS{}-free one-relator groups of Theorem \ref{thm:flexibilityHyperbolic} as a torsion-free one-relator group is \CSA{} if and only if it is \BS{}-free \cite[Theorem B]{gardam2021algebraically}.
We start with a lemma which allows us to apply Theorem \ref{thm:JSJFormTF}.

\begin{lemma}
\label{lem:NEbasic}
Let $G$ be a torsion-free \BS{}-free two-generator one-relator group with $\mathbb{Z}^2$ abelianisation. Suppose $G$ admits an HNN-presentation $\langle H,t\mid t^{-1}p^nt=q\rangle$ where $p, q$ are nontrivial elements of $H$ and the subgroups $\langle p\rangle$ and $\langle q\rangle$ are malnormal in $H$.
Then there exist a normal subgroup $N$ of $F(x, y)$ such that:
\begin{enumerate}
\item\label{NEbasic:1} $F(x, y)/N\cong G$,
\item\label{NEbasic:2} $N$ can be normally generated by a single element $R_N\in F(x, y)$,
\item\label{NEbasic:3} There exists a finite subset $\mathbf{t}_N$ of $\langle x, y^{-1}xy\rangle$ such that $\mathbf{t}_N$ normally generates $N$.
\end{enumerate}
\end{lemma}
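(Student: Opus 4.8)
The plan is to extract, from the hypothesised HNN-splitting, a two-generating pair $\{x,y\}$ of $G$ in which every relator can be taken inside $\langle x,y^{-1}xy\rangle$, and then to upgrade the resulting presentation on this pair to a one-relator one. First I would check that $\langle H,t\mid t^{-1}p^nt=q\rangle$ exhibits an essential $\mathbb{Z}$-splitting, so that Theorem \ref{thm:KWquote} can be applied. Since $q\neq 1$ we have $n\neq 0$, and since $p\neq 1$ and $G$ is torsion-free the edge groups $\langle p^n\rangle$ and $\langle q\rangle$ are infinite cyclic, so the splitting is a $\mathbb{Z}$-splitting; it is essential because if, say, $\langle p^n\rangle$ had finite index in $H$ then $H$ would be virtually-$\mathbb{Z}$, hence (being torsion-free) infinite cyclic, forcing $G$ to be a Baumslag--Solitar group and contradicting that $G$ is an RG group. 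Moreover $G$ is one-ended: having $\mathbb{Z}^2$ abelianisation its relator lies in $F(a,b)'\setminus\{1\}$, so is non-empty and non-primitive, and one-endedness then follows from the first part of Proposition \ref{prop:Fuchsian}. The malnormality of $\langle p\rangle$ and $\langle q\rangle$ is what makes the Kapovich--Weidmann structure theory underlying Theorem \ref{thm:KWquote} applicable, and it is inherited by the powers and conjugates that arise.

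With this in place, Theorem \ref{thm:KWquote} tells us that $G$ admits a presentation $\langle a,b\mid \mathbf{t}(a,b^{-1}ab)\rangle$ for some $\mathbf{t}(a,y)\subseteq F(a,y)$, coming from the single-loop \Z{}-JSJ decomposition with rigid vertex group $H_0=\langle a,y\mid \mathbf{t}(a,y)\rangle$. Setting $x:=a$ and $y:=b$, taking $\mathbf{t}_N:=\mathbf{t}(x,y^{-1}xy)$ (a subset of the rank-two free subgroup $\langle x,y^{-1}xy\rangle\leqslant F(x,y)$), and letting $N$ be its normal closure, one has $F(x,y)/N\cong G$, which is condition (1), and $\mathbf{t}_N$ normally generates $N$; since $G$ is one-relator, hence finitely presentable, and the generating set $\{x,y\}$ is fixed, a finite subset of $\mathbf{t}_N$ already normally generates $N$, and replacing $\mathbf{t}_N$ by it yields the finiteness demanded by condition (3).

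The remaining point, condition (2), is the crux: one must show that this same pair $\{x,y\}$ is a one-relator generating pair, i.e.\ that $N$ is normally generated by a single element, equivalently that the two-generated group $H_0$ is itself one-relator (note $\chi(H_0)=\chi(G)=0$, since $G$ is an HNN-extension of $H_0$ over $\mathbb{Z}$). I would argue this by a relation-module computation. From any one-relator presentation $\langle c,d\mid R_0\rangle$ of $G$ --- with $R_0$ not a proper power, as $G$ is torsion-free --- Lyndon's identity theorem gives that its relation module is free of rank one, so the augmentation ideal $I_G$ has projective dimension one over $\mathbb{Z}G$; feeding this into the short exact sequence $0\to N^{\mathrm{ab}}\to\mathbb{Z}G^{2}\to I_G\to 0$ associated to the pair $\{x,y\}$ and applying Schanuel's lemma shows that $N^{\mathrm{ab}}$ is stably free of rank one. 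The final step --- deducing that $N$ is \emph{normally} generated by one element --- is where the argument is least routine, and relies on the fact (special to torsion-free one-relator groups) that such a group is one-relator on every two-generating set. I expect this to be the main obstacle; it can be handled either via triviality of the relevant $K$-theory (absence of a relation gap) or, more concretely, by running the Magnus--Moldavanskii hierarchy on $\langle c,d\mid R_0\rangle$ with respect to a generator on which $R_0$ has exponent sum zero and then identifying, using uniqueness of essential $\mathbb{Z}$-splittings (Proposition \ref{prop:JSJsEverywhere}), the resulting HNN-decomposition with the one furnished by Theorem \ref{thm:KWquote}, whose base $H_0$ is thereby exhibited as a one-relator Magnus base.
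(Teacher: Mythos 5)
Your treatment of conditions (\ref{NEbasic:1}) and (\ref{NEbasic:3}) is fine (Theorem \ref{thm:KWquote} plus finite presentability), but your argument for condition (\ref{NEbasic:2}) has a genuine gap, and it is exactly at the point you flag as ``least routine''. You need the pair $(x,y)$ furnished by the JSJ presentation to be a one-relator generating pair, and you propose to get this from the assertion that a torsion-free two-generator one-relator group is one-relator on \emph{every} generating pair. That assertion is not an available fact: it is essentially a Nielsen-equivalence/relation-gap question, and neither of your suggested repairs closes it. The relation-module computation (Lyndon's identity theorem plus Schanuel) only shows the relation module of the new pair is stably free of rank one; passing from that to normal generation of $N$ by a single element is precisely the open relation-gap type problem, not a corollary of vanishing $K$-theory. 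The Magnus--Moldavanskii fallback also fails as stated: the HNN splitting produced by the Magnus rewriting has associated subgroups which are Magnus (free) subgroups, in general of rank greater than one, so it is not an essential $\mathbb{Z}$-splitting and Proposition \ref{prop:JSJsEverywhere} gives you no way to identify it with the \Z{}-JSJ splitting coming from Theorem \ref{thm:KWquote}.

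The paper avoids this entirely by working in the opposite direction: instead of trying to prove that the JSJ pair is one-relator, it transports the one-relator property from the given pair to the standard pair. Since $\langle p\rangle$ and $\langle q\rangle$ are malnormal, they are conjugacy separated in $H$ \cite[Lemma 3.6.2]{Kapovich1999structure}, and Kapovich--Weidmann's classification of generating pairs \cite[Corollary 3.1]{Kapovich1999TwoGen} then says that \emph{any} generating pair of $G$ --- in particular the one-relator pair $(a,b)$ --- is Nielsen equivalent, up to conjugation, to a pair of the form $(p^{i}, th)$; the $\mathbb{Z}^2$ abelianisation forces $|i|=1$. Since the one-relator property of a generating pair is preserved by Nielsen transformations and conjugation, the pair $(x,y)=(p,th)$ admits a one-relator presentation $\langle x,y\mid R_N\rangle$, giving (\ref{NEbasic:1}) and (\ref{NEbasic:2}); condition (\ref{NEbasic:3}) is then obtained by Tietze transformations from the HNN structure, using $H=\langle p, h^{-1}qh\rangle$ \cite[Proposition 3.8]{Kapovich1999structure}, on this \emph{same} pair. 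If you want to salvage your write-up, replace your relation-module step by this appeal to the Kapovich--Weidmann generating-pair theorem; without some such input your proof of (\ref{NEbasic:2}) does not go through.
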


\begin{proof}
Suppose that $G$ admits a one-relator presentation $\langle a, b \mid R \rangle$.

The subgroups $\langle p\rangle$ and $\langle q\rangle$ are ``conjugacy separated'' in $H$, that is, for every $h\in H$ we have $h^{-1}\langle p\rangle h\cap\langle q\rangle=1$.
To see this, suppose otherwise, so $h^{-1}p^ih=q^j$ for $|i|, |j|>0$.
It follows that $h^{-1}p^ih=t^{-1}p^{nj}t$, and as $G$ is \CSA{} we have that $th^{-1}$ and $p$ share a common power, a contradiction by Britton's Lemma.

As $\langle p\rangle$ and $\langle q\rangle$ are conjugacy separated in $H$, there exists a Nielsen transformation $\psi$ of $F(a, b)$ and some $g\in G$ such that $\psi(a)=_Gg^{-1}p^ig$ and $\psi(b)=_Gg^{-1}thg$ for some $i\in \mathbb{Z}$ and some $h\in H$ \cite[Corollary 3.1]{Kapovich1999TwoGen}. Therefore, writing $x:=p$ and $y:=th$, the pair $(x, y)$ generates $G$. But also the pair $(x^i,y)$ generates $G$, and as $G$ has abelianisation $\mathbb{Z}^2$ we have that $|i|=1$.
Note that as the generating pair $(a, b)$ of $G$ admits a one-relator presentation, and as $\psi$ is a Nielsen transformation, we have that the pair $(x, y)$ also admits a one-relator presentation $\langle x, y\mid R_N\rangle$. Hence, points (\ref{NEbasic:1}) and (\ref{NEbasic:2}) of the theorem hold for some subgroup $N\leq F(x, y)$.

We now establish (\ref{NEbasic:3}) for $N\leq F(x, y)$. Firstly, note that $H=\langle p, h^{-1}qh\rangle$ by \cite[Proposition 8.3]{gardam2021algebraically}, and so $H$ has presentation $\langle x, z\mid\mathbf{s}(x, z)\rangle$ where $x:=p$ (as above) and $z:=h^{-1}qh$.
    Therefore, $G$ has presentation $\langle x, z, t\mid \mathbf{s}(x, z), t^{-1}x^nt=w(x, z)zw(x, z)^{-1}\rangle$ where $w(x, z)$ represents the element $h\in H$. As remarked above the theorem, because $G$ is finitely presentable we may assume for our purposes with $G$ (but not $H$) that the set $\mathbf{s}(x, z)$ is finite. As $y=th$, we can apply Tietze transformations as follows, where the generators $x$ and $y$ correspond precisely to the generators $x$ and $y$ in the previous paragraph:
\begin{align*}
&\langle x, z, t\mid \mathbf{s}(x, z), t^{-1}x^nt=w(x, z)zw^{-1}(x, z)\rangle\\
&\cong\langle x, y, z, t\mid \mathbf{s}(x, z), t^{-1}x^nt=w(x, z)zw^{-1}(x, z), y=tw(x, z)\rangle\\
&\cong\langle x, y, z, t\mid \mathbf{s}(x, z), y^{-1}x^ny=z, y=tw(x, z)\rangle\\
&\cong\langle x, y, z\mid \mathbf{s}(x, z), y^{-1}x^ny=z\rangle\\
&\cong\langle x, y\mid \mathbf{s}(x, y^{-1}x^ny)\rangle\\
&\cong\langle x, y\mid \mathbf{t}_N\rangle
\end{align*}
for some finite subset $\mathbf{t}_N = \mathbf{s}(x, y^{-1}x^ny)$ of $\langle x, y^{-1}xy\rangle$.
As the symbols $x$ and $y$ in this presentation of $G$ and the one-relator presentation $\langle x, y\mid R_N\rangle$ of $G$ both represent the same elements of $G$, it follows that $N$ is the normal closure of the set $\mathbf{t}_N$ in $F(x, y)$, as required.
\end{proof}

Our next result is the torsion-free part of our main theorem. It is worth stating separately because it does not include the exceptional cases of points (\ref{flexibilityHyperbolic:2}) and (\ref{flexibilityHyperbolic:3}).

\begin{theorem}
\label{thm:flexibilityHyperbolicRGVERSION}
Let $G$ be a torsion-free \BS{}-free group admitting a two-generator one-relator presentation $\mathcal{P}=\langle a, b\mid R\rangle$ with $R\in F(a, b)'\setminus\{1\}$.
The following are equivalent.
\begin{enumerate}
\item\label{flexibilityHyperbolic:1}
$G$ has non-trivial \ZC{}-JSJ decomposition.
\item\label{flexibilityHyperbolic:2} There exists a word $T$ of shortest length in the $\aut(F(a, b))$-orbit of $R$ such that $T\in\langle a, b^{-1}ab\rangle$.
\item\label{flexibilityHyperbolic:3} The Friedl--Tillmann polytope of $\mathcal{P}$ is a straight line.
\end{enumerate}
\end{theorem}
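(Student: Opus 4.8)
The plan is to prove the cycle of implications (1) $\Rightarrow$ (3) $\Rightarrow$ (2) $\Rightarrow$ (1), noting along the way that (2) $\Leftrightarrow$ (3) comes for free. Before anything else I would pin down the ambient facts that make the earlier results applicable to $G$. Since $R\in F(a,b)'\setminus\{1\}$ the abelianisation of $G$ is $\mathbb{Z}^2$; since an RG group is torsion-free, $R$ cannot be a proper power (a one-relator group whose relator is a non-trivial proper power has torsion); and $R$ is non-primitive because it lies in the commutator subgroup. Hence $G$ is one-ended by Proposition~\ref{prop:Fuchsian}(1), so Theorem~\ref{thm:KWquote} and the whole \Z{}-JSJ apparatus for two-generated one-ended RG groups with $\mathbb{Z}^2$ abelianisation are available.

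For (2) $\Rightarrow$ (1): given a shortest-length representative $T$ of the $\aut(F(a,b))$-orbit of $R$ lying in $\langle a,b^{-1}ab\rangle$, write $T=T_0(a,b^{-1}ab)$ with $T_0\in F(a,y)$. Applying the automorphism of $F(a,b)$ carrying $R$ to $T$ does not change the quotient group, so $G$ has the presentation $\langle a,b\mid T_0(a,b^{-1}ab)\rangle$; this is precisely condition~(3) of Theorem~\ref{thm:KWquote}, whose condition~(1) then gives that $G$ has non-trivial \Z{}-JSJ decomposition. For (3) $\Rightarrow$ (2) --- and, closing the circle, also for (2) $\Rightarrow$ (3) --- one simply invokes Lemma~\ref{lem:StraightLinePolytopes}, whose proof already does the Whitehead-reduction work (Proposition~\ref{prop:whitehead}) needed to pass to a shortest orbit representative inside $\langle a,b^{-1}ab\rangle$.

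The substantive direction is (1) $\Rightarrow$ (3). By Theorem~\ref{thm:KWquote}, a non-trivial \Z{}-JSJ decomposition of $G$ realises $G$ as an HNN-extension $\langle a,y,b\mid \mathbf{t}(a,y),\ y=b^{-1}ab\rangle$, i.e.\ $G=\langle H,b\mid b^{-1}ab=y\rangle$ with base group $H=\langle a,y\mid\mathbf{t}(a,y)\rangle$ and associated subgroups $\langle a\rangle$ and $\langle y\rangle$. These are maximal cyclic in $G$: $a$ is not a proper power since its image generates a direct summand of $G^{\mathrm{ab}}=\mathbb{Z}^2$, and $\langle y\rangle=\langle b^{-1}ab\rangle$ is a conjugate of $\langle a\rangle$; and since $G$ is \CSA{} by Lemma~\ref{lem:RGCSA}, maximal cyclic subgroups are malnormal in $G$ and hence malnormal in the subgroup $H$. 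Thus Lemma~\ref{lem:NEbasic} applies (with $n=1$), producing a normal subgroup $N\trianglelefteq F(x,y)$ with $F(x,y)/N\cong G$, normally generated by a single element, and normally generated by a subset of $\langle x,y^{-1}xy\rangle$ --- exactly the hypotheses of Theorem~\ref{thm:JSJFormTF}. That theorem then yields that the Friedl--Tillmann polytope of $\mathcal{P}$ is a straight line, which is~(3).

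I expect the only place requiring care is the transition from the \Z{}-JSJ decomposition of $G$ to the HNN-data fed into Lemma~\ref{lem:NEbasic}: one must be sure that the edge group is a genuine \Z{}-subgroup (so that the associated subgroups are maximal cyclic, in particular not proper powers, forcing $n=1$) and that malnormality --- not merely conjugacy-separation --- holds for the associated subgroups, which is exactly where \CSA{}-ness of RG groups enters. All the real homological content --- the $L^2$-torsion computation over the Linnell skew-field that forces the polytope to degenerate to a segment --- is already encapsulated in Theorem~\ref{thm:JSJFormTF}, so beyond this bookkeeping the proof is a matter of wiring together Theorems~\ref{thm:KWquote} and~\ref{thm:JSJFormTF} with Lemmas~\ref{lem:NEbasic} and~\ref{lem:StraightLinePolytopes}.
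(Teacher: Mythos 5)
Your proposal is correct and follows essentially the same route as the paper: (2) $\Leftrightarrow$ (3) via Lemma \ref{lem:StraightLinePolytopes}, (2) $\Rightarrow$ (1) via Theorem \ref{thm:KWquote}, and (1) $\Rightarrow$ (3) by feeding the HNN-data from Theorem \ref{thm:KWquote} into Lemma \ref{lem:NEbasic} and then applying Theorem \ref{thm:JSJFormTF}. The extra bookkeeping you supply (one-endedness via Proposition \ref{prop:Fuchsian}, maximality of $\langle a\rangle$ from the $\mathbb{Z}^2$ abelianisation, malnormality from \CSA{}-ness) is exactly the verification the paper leaves implicit in its citation of Theorem \ref{thm:KWquote}.
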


\begin{proof}
Note that $G$ is one-ended by Proposition \ref{prop:Fuchsian}, and is a \CSA{} group. Therefore, Theorem \ref{thm:KWquote} is applicable.

Items (\ref{flexibilityHyperbolic:2}) and (\ref{flexibilityHyperbolic:3}) are equivalent by Lemma \ref{lem:StraightLinePolytopes}.

\noindent(\ref{flexibilityHyperbolic:2}) $\Rightarrow$ (\ref{flexibilityHyperbolic:1})
The word $T$ gives a presentation for $G$:
\[
\langle a, b\mid T\rangle
=\langle a, b\mid T_0(a, b^{-1}ab)\rangle.
\]
Then (\ref{flexibilityHyperbolic:1}) follows by Theorem \ref{thm:KWquote}.

\medskip
\noindent(\ref{flexibilityHyperbolic:1}) $\Rightarrow$ (\ref{flexibilityHyperbolic:3})
By Theorem \ref{thm:KWquote} the conditions of Lemma \ref{lem:NEbasic} are satisfied, so we may apply Theorem \ref{thm:JSJFormTF}, and (\ref{flexibilityHyperbolic:3}) follows.
\end{proof}

We wish to combine Theorem \ref{thm:flexibilityHyperbolicRGVERSION} and Theorem \ref{thm:flexibilityWithTorsion} to prove Theorem \ref{thm:flexibilityHyperbolic}.
However, if $R=S^n$ with $n>1$ in Theorem \ref{thm:flexibilityHyperbolic} then we cannot immediately apply Theorem \ref{thm:flexibilityWithTorsion}, as this theorem deals with the root $S$ rather than the relator $R=S^n$. The following observation is therefore useful.
\begin{lemma}
\label{lem:AutsAndPowers}
If $n>1$ and $S\in F(a, b)$, then $S$ has shortest length in its $\aut(F(a, b))$-orbit if and only if $S^n$ has shortest length in its $\aut(F(a, b))$-orbit.
\end{lemma}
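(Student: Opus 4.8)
The plan is to use the fact that for a cyclically reduced word $U \in F(a,b)$ the conjugacy length $|[U]|$ equals the reduced length of $U^n$ divided by $n$ — more precisely, that $|[U^n]| = n \cdot |[U]|$ for all $n \geq 1$, where $|[\cdot]|$ denotes minimal length in the conjugacy class as in the proof of Proposition \ref{prop:whitehead}. This is standard: a cyclically reduced word $U$ has $U^n$ cyclically reduced of length $n|U|$, and conjugating cannot shorten a power below its cyclic-reduction length. The key point, which I would isolate first, is that this identity is preserved by automorphisms: for any $\alpha \in \aut(F(a,b))$ one has $|[\alpha(S^n)]| = |[\alpha(S)^n]| = n \cdot |[\alpha(S)]|$. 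Note that $\aut(F(a,b))$-orbit minimality of an element $W$ is equivalent to conjugacy-class minimality, i.e. $|W|$ is least among $|[\alpha(W)]|$ over all $\alpha$ (one may always cyclically reduce without leaving the orbit, since cyclic permutation is achieved by an inner automorphism).

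With that in hand the argument is a short chain of equivalences. First reduce to the case where $S$ (hence $S^n$) is cyclically reduced; this changes neither hypothesis since both ``shortest in the orbit'' conditions are cyclic-reduction invariant. Then: $S$ has shortest length in its orbit $\iff$ $|S| \leq |[\alpha(S)]|$ for all $\alpha \iff$ $n|S| \leq n|[\alpha(S)]| = |[\alpha(S^n)]|$ for all $\alpha \iff$ $|S^n| \leq |[\alpha(S^n)]|$ for all $\alpha \iff$ $S^n$ has shortest length in its orbit. The only slightly delicate direction is the ``only if'': given that $S^n$ is minimal one recovers minimality of $S$, which is immediate from dividing the inequality $n|S| \le |[\alpha(S^n)]| = n|[\alpha(S)]|$ by $n$.

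The main obstacle — really the only thing that needs genuine justification rather than bookkeeping — is the identity $|[\alpha(S)^n]| = n\,|[\alpha(S)]|$, equivalently that $n$-th powers cannot be shortened (after cyclic reduction) by more than a factor of $n$. I would prove this by writing a cyclically reduced representative $V$ of $[\alpha(S)]$ and observing $V^n$ is then cyclically reduced of length $n|V|$, so $|[\alpha(S)^n]| = |[V^n]| = n|V| = n|[\alpha(S)]|$; the inequality $|[W^n]| \leq n|[W]|$ is trivial and the reverse follows because any cyclic reduction of $W^n$ is conjugate to $V^n$ and hence has length at least that of the cyclically reduced $V^n$. This uses only elementary free-group combinatorics and no deeper input.
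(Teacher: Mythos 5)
Your proposal is correct and takes essentially the same route as the paper: both arguments rest on the observations that shortest-in-orbit elements are cyclically reduced and that powers of cyclically reduced words are cyclically reduced, yielding the identity $|[\alpha(S^n)]| = n\,|[\alpha(S)]|$ and then a short length comparison. One small wording point: the opening reduction is justified not because the two conditions are literally invariant under replacing $S$ by its cyclic reduction, but because if $S$ is not cyclically reduced then neither $S$ nor $S^n$ is shortest in its orbit (each is strictly longer than its cyclic reduction, which lies in the same orbit), so both sides of the equivalence fail and the lemma holds trivially in that case.
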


\begin{proof}
Suppose that $\alpha, \beta \in \aut(F(a, b))$ are such that $\alpha(S)$ and $\beta(S^n)$ are shortest-length in their respective $\aut(F(a, b))$-orbits.
In particular, they are both cyclically reduced and since $\beta(S^n) = \beta(S)^n$ is cyclically reduced if and only if $\beta(S)$ is cyclically reduced, we see that $|\alpha(S)^n| = n |\alpha(S)|$ and $|\beta(S)^n| = n |\beta(S)|$.
Now by the shortest-length assumptions we have \[
    |\alpha(S^n)| \geq |\beta(S^n)| = n |\beta(S)| \geq n |\alpha(S)| = |\alpha(S^n)|
\] so equality holds.
The conclusion follows by taking $\alpha$ or $\beta$ to be the identity as appropriate.
\end{proof}

We now combine Theorem \ref{thm:flexibilityHyperbolicRGVERSION} and Theorem \ref{thm:flexibilityWithTorsion} to prove Theorem \ref{thm:flexibilityHyperbolic}.

\begin{theorem}
[Theorem \ref{thm:flexibilityHyperbolic}]
\label{thm:flexibilityHyperbolicBODYVERSION}
Let $G$ be a \BS{}-free group admitting a two-generator one-relator presentation $\mathcal{P}=\langle a, b\mid R\rangle$ with $R\in F(a, b)'\setminus\{1\}$.
The following are equivalent.
\begin{enumerate}
\item\label{flexibilityHyperbolic:1}
$G$ has non-trivial \ZC{}-JSJ decomposition.
\item\label{flexibilityHyperbolic:2} There exists a word $T$ of shortest length in the $\aut(F(a, b))$-orbit of $R$ such that $T\in\langle a, b^{-1}ab\rangle$ but $T$ is not conjugate to $[a, b]^{k}$ for any $k\in\mathbb{Z}$.
\item\label{flexibilityHyperbolic:3} The Friedl--Tillmann polytope of $\mathcal{P}$ is a straight line, but not a single point.
\end{enumerate}
\end{theorem}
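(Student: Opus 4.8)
The plan is to combine Theorem~\ref{thm:flexibilityHyperbolicRGVERSION} (the torsion-free/RG case) with Theorem~\ref{thm:flexibilityWithTorsion} (the case with torsion), the main task being to match the ``marked'' conditions (\ref{flexibilityHyperbolic:2}) and (\ref{flexibilityHyperbolic:3}) here with the slightly different conditions appearing in those two theorems. Write $R=S^n$ with $n\geqslant 1$ maximal and $S$ not a proper power. Since $R\in F(a,b)'$ and the abelianisation of $F(a,b)$ is the torsion-free group $\mathbb{Z}^2$, we get $S\in F(a,b)'$; in particular $S$ is non-empty and non-primitive, and $G$ is one-ended (by Proposition~\ref{prop:Fuchsian}(\ref{Fuchsian:1})).

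First I would prove the equivalence (\ref{flexibilityHyperbolic:2}) $\Leftrightarrow$ (\ref{flexibilityHyperbolic:3}), which holds for every $R\in F(a,b)'\setminus\{1\}$ and needs no case split. By Lemma~\ref{lem:StraightLinePolytopes} the polytope of $\mathcal{P}$ is a (possibly degenerate) straight line exactly when some shortest-length word $T$ in the $\aut(F(a,b))$-orbit of $R$ lies in $\langle a, b^{-1}ab\rangle$, and by Lemma~\ref{lem:SinglePointPolytopes} it is a single point exactly when $R$ is conjugate to $[a,b]^k$ for some $k$. Thus (\ref{flexibilityHyperbolic:3}) asserts ``some shortest $T$ lies in $\langle a,b^{-1}ab\rangle$, and $R$ is conjugate to no $[a,b]^k$'', whereas (\ref{flexibilityHyperbolic:2}) asserts ``some shortest $T$ lies in $\langle a,b^{-1}ab\rangle$ and is itself conjugate to no $[a,b]^k$''. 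These agree once one establishes the auxiliary fact that, for any shortest-length $T$ in the orbit of $R$, the word $T$ is conjugate to a power of $[a,b]$ if and only if $R$ is. The forward direction is immediate: conjugates lie in the $\aut(F(a,b))$-orbit, so $R$ would then lie in the orbit of some $[a,b]^k$, and by \cite[Theorem 3.9]{mks} together with uniqueness of roots in free groups, $R$ is conjugate to $[a,b]^{\pm k}$. For the converse, if $R=S^n$ is conjugate to a power of $[a,b]$ then $S$ is conjugate to $[a,b]^{\pm 1}$ (by \cite[Theorem 3.9]{mks} and uniqueness of roots), so any shortest $T$ in the orbit of $R$ has the form $T=w^n$ with $w$ a shortest element of the orbit of $S$, hence $w$ is conjugate to $[a,b]^{\pm 1}$ and $T$ to $[a,b]^{\pm n}$.

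Next I would prove (\ref{flexibilityHyperbolic:1}) $\Leftrightarrow$ (\ref{flexibilityHyperbolic:2}) by splitting on whether $R$ is a proper power. If $n=1$, then a one-relator group whose relator is not a proper power is torsion-free \cite[Proposition II.5.18]{L-S}, so whether $G$ is assumed hyperbolic or RG, it is in fact an RG group, and Theorem~\ref{thm:flexibilityHyperbolicRGVERSION} identifies (\ref{flexibilityHyperbolic:1}) with ``some shortest $T$ in the orbit of $R$ lies in $\langle a,b^{-1}ab\rangle$''. Since $G$ is hyperbolic or RG it is not isomorphic to $\mathbb{Z}^2$ (hyperbolic groups have no $\mathbb Z^2$ subgroup, and by Lemma~\ref{lem:RGCSA} an RG group is \CSA{}, hence has no non-cyclic abelian subgroup), so $R$ is not conjugate to $[a,b]^{\pm 1}$; as $n=1$ forces any such power to be $[a,b]^{\pm1}$, the relator $R$ is conjugate to no power of $[a,b]$, and by the auxiliary fact no shortest $T$ is either, so the condition of Theorem~\ref{thm:flexibilityHyperbolicRGVERSION} coincides with (\ref{flexibilityHyperbolic:2}). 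If instead $n>1$, then $G$ is a one-relator group with torsion, hence hyperbolic \cite[Theorem IV.5.5]{L-S}, and since $S$ is non-empty and non-primitive we may apply Theorem~\ref{thm:flexibilityWithTorsion}, which identifies (\ref{flexibilityHyperbolic:1}) with the existence of a shortest-length $T$ in the orbit of $S$ with $T\in\langle a,b^{-1}ab\rangle$ but $T$ not conjugate to $[a,b]^{\pm1}$. By Lemma~\ref{lem:AutsAndPowers} the shortest-length elements of the orbit of $R=S^n$ are exactly the $n$-th powers $w^n$ of shortest-length elements $w$ of the orbit of $S$; moreover $w^n\in\langle a,b^{-1}ab\rangle$ iff $w\in\langle a,b^{-1}ab\rangle$ (this subgroup is a free factor of the kernel of $F(a,b)\to\mathbb Z$, $a\mapsto 0$, $b\mapsto 1$, and free factors are closed under taking roots), and $w^n$ is conjugate to some $[a,b]^k$ iff $w$ is conjugate to $[a,b]^{\pm1}$ (uniqueness of roots, together with the fact that $w$, being shortest in the orbit of the non-proper-power $S$, is not a proper power). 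Hence the condition of Theorem~\ref{thm:flexibilityWithTorsion} translates verbatim into (\ref{flexibilityHyperbolic:2}). Combining the two cases with the already-established equivalence (\ref{flexibilityHyperbolic:2}) $\Leftrightarrow$ (\ref{flexibilityHyperbolic:3}) completes the proof.

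I expect the main obstacle to be the bookkeeping in the torsion case: correctly transferring both the subgroup-membership condition $T\in\langle a,b^{-1}ab\rangle$ and the ``conjugate to a power of $[a,b]$'' condition between the orbit of the root $S$ and the orbit of $R=S^n$, so that the statement of Theorem~\ref{thm:flexibilityWithTorsion} (phrased for $S$, using $[a,b]^{\pm1}$) lines up with condition (\ref{flexibilityHyperbolic:2}) (phrased for $R$, using $[a,b]^{k}$). The crucial ingredients there are Lemma~\ref{lem:AutsAndPowers}, the observation that a shortest-in-orbit element of a non-proper-power is again not a proper power, and the closure of $\langle a,b^{-1}ab\rangle$ under $n$-th roots in $F(a,b)$.
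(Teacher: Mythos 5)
Your proposal is correct and follows essentially the same route as the paper: the paper likewise reduces to Theorem \ref{thm:flexibilityHyperbolicRGVERSION} in the torsion-free (RG) case after noting that a shortest orbit representative conjugate to $[a,b]^k$ would force $G$ to be abelian or have torsion, and in the case $R=S^n$ with $n>1$ it transfers the conditions of Theorem \ref{thm:flexibilityWithTorsionBODYVERSION} between $S$ and $R$ via Lemma \ref{lem:AutsAndPowers}, uniqueness of roots, and Lemmas \ref{lem:StraightLinePolytopes} and \ref{lem:SinglePointPolytopes}. Your extra justifications (e.g.\ that $\langle a,b^{-1}ab\rangle$ is root-closed as a free factor of $\ker(F(a,b)\to\mathbb{Z})$) are correct and in fact make explicit a step the paper's proof of (\ref{flexibilityHyperbolic:2})$\Rightarrow$(\ref{flexibilityHyperbolic:1}) leaves implicit.
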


\begin{proof}
If there exists a word $T$ in the $\aut(F(a, b))$-orbit of $R$ which is conjugate to $[a, b]^{k}$ for some $k\in\mathbb{Z}$, then $G$ is either abelian (when $|k|=1$) or contains torsion (when $k>1$). Applying this to Lemma \ref{lem:SinglePointPolytopes}, we also have that if $\mathcal{P}$ is a single point then $G$ is either abelian or contains torsion. Therefore, if $G$ is torsion-free then the result follows by Theorem \ref{thm:flexibilityHyperbolicRGVERSION}.

Suppose $G$ contains torsion, so $R=S^n$ in $F(a, b)$ for some $n>1$ maximal.

\medskip
\noindent(\ref{flexibilityHyperbolic:2}) $\Leftrightarrow$ (\ref{flexibilityHyperbolic:3})
This follows from Lemmas \ref{lem:StraightLinePolytopes} and \ref{lem:SinglePointPolytopes}.

\medskip
\noindent(\ref{flexibilityHyperbolic:1}) $\Rightarrow$ (\ref{flexibilityHyperbolic:2})
If $G$ has non-trivial \ZC{}-JSJ decomposition then, by Theorem \ref{thm:flexibilityWithTorsion}, there exists a word $T'$ of shortest length in the $\aut(F(a, b))$-orbit of $S$ such that $T'\in\langle a, b^{-1}ab\rangle$ but $T'$ is not conjugate to $[a, b]^{\pm 1}$. Then the word $T:=(T')^n$ has shortest length in its $\aut(F(a, b))$-orbit, by Lemma \ref{lem:AutsAndPowers},
and hence in the $\aut(F(a, b))$-orbit of $R$, is contained in $\langle a, b^{-1}ab\rangle$, and is not conjugate to $[a, b]^{\pm n}$ (we are using uniqueness of roots in free groups here). As $n$ is maximal, $T$ is not conjugate to any other power of $[a, b]^{\pm1}$, and so (\ref{flexibilityHyperbolic:2}) holds.

\medskip
\noindent(\ref{flexibilityHyperbolic:2}) $\Rightarrow$ (\ref{flexibilityHyperbolic:1})
For $T$ as in (\ref{flexibilityHyperbolic:2}) , write $\psi\in\aut(F(a, b))$ for the automorphism such that $\psi(R)=T$, and set $T':=\psi(S)$. Then $T'$ has shortest length in the $\aut(F(a, b))$-orbit of $S$, by Lemma \ref{lem:AutsAndPowers},
is in $\langle a, b^{-1}ab\rangle$ and is not conjugate to $[a, b]^{\pm 1}$. Therefore, by Theorem \ref{thm:flexibilityWithTorsion}, (\ref{flexibilityHyperbolic:1}) holds.
\end{proof}

\section{Consequences of the main theorem}
\label{sec:mainProofs}

We now prove Corollaries \ref{corol:JSJform}--\ref{corol:OutCommensurability} from the introduction

\p{Forms of \boldmath{\ZC{}-JSJ} decompositions}
We first prove Corollary \ref{corol:JSJform}, which describes the \ZC{}-JSJ decompositions of our groups.

\begin{corollary}[Corollary \ref{corol:JSJform}]
\label{corol:JSJformBODYVERSION}
Let the group $G$ and the presentation $\mathcal{P}$ be as in Theorem \ref{thm:flexibilityHyperbolic} or \ref{thm:flexibilityWithTorsion}, and write $R=S^n$ for $n\geq1$ maximal.
Suppose that $G$ has non-trivial \ZC{}-JSJ decomposition $\mathbf{\Gamma}$. Then the graph underlying $\mathbf{\Gamma}$ consists of a single rigid vertex and a single loop edge.
Moreover, the corresponding HNN-extension has vertex group $\langle a, y\mid T_0^n(a, y)\rangle$, stable letter $b$, and attaching map given by $y=b^{-1}ab$, where $T_0(a, b^{-1}ab)$ is the word $T$ from the respective theorem. Finally, $|T_0|<|S|$.
\end{corollary}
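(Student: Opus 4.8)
The plan is to split into the torsion-free case and the torsion case according to whether $n=1$ or $n>1$ in $R=S^n$, read off the shape of $\mathbf{\Gamma}$ and its vertex group directly from the classification theorems already proved, and then establish the length bound $|T_0|<|S|$ by a short substitution count that works uniformly in both cases. The only genuinely new content is that last inequality; the rest is assembling cited results. The one point that needs care is matching up the HNN-presentation produced by Theorem \ref{thm:KWquote} (resp.\ Theorem \ref{thm:JSJclassificationONEREL}) with \emph{the} \Z{}-JSJ decomposition $\mathbf{\Gamma}$, which is where uniqueness (Convention \ref{conv:JSJ} together with Proposition \ref{prop:JSJsEverywhere}) enters.

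\emph{Torsion-free case.} Here $n=1$, so $S=R$. Since $G$ is hyperbolic-or-RG it is a two-generated one-ended RG group, and $R\in F(a,b)'\setminus\{1\}$ gives it $\mathbb{Z}^2$ abelianisation; one-endedness holds because then the root is non-primitive and non-empty (Proposition \ref{prop:Fuchsian}). Non-triviality of $\mathbf{\Gamma}$ lets me invoke Theorem \ref{thm:flexibilityHyperbolicBODYVERSION}, (\ref{flexibilityHyperbolic:1})$\Rightarrow$(\ref{flexibilityHyperbolic:2}), to get a word $T$ of shortest length in the $\aut(F(a,b))$-orbit of $R$ with $T\in\langle a, b^{-1}ab\rangle$; write $T=T_0(a,b^{-1}ab)$, which is legitimate and determines $T_0$ uniquely since $\{a,b^{-1}ab\}$ freely generates $\langle a,b^{-1}ab\rangle$. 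Then $\langle a,b\mid T_0(a,b^{-1}ab)\rangle$ presents $G$, so Theorem \ref{thm:KWquote}, (\ref{KWquote:3})$\Rightarrow$(\ref{KWquote:4}), identifies the \Z{}-JSJ decomposition of $G$ with the HNN-extension $\langle a,y,b\mid T_0(a,y),\, y=b^{-1}ab\rangle$, rigid vertex group $\langle a,y\mid T_0(a,y)\rangle=\langle a,y\mid T_0^n(a,y)\rangle$, stable letter $b$; by Convention \ref{conv:JSJ} and Proposition \ref{prop:JSJsEverywhere} this is $\mathbf{\Gamma}$, whose underlying graph is a single rigid vertex with a single loop edge.

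\emph{Torsion case.} Now $n>1$, so $G$ is hyperbolic with torsion and $R=S^n$. The hypotheses of Theorem \ref{thm:flexibilityWithTorsionBODYVERSION} hold: $S$ is non-empty, and $S\in F(a,b)'$ (since $R\in F(a,b)'$) forces $S$ non-primitive. Non-triviality of $\mathbf{\Gamma}$ gives, via (\ref{flexibilityHyperbolic:1})$\Rightarrow$(\ref{flexibilityHyperbolic:2}) of that theorem, a word $T$ of shortest length in the $\aut(F(a,b))$-orbit of $S$ with $T\in\langle a,b^{-1}ab\rangle$ and $T$ not conjugate to $[a,b]^{\pm1}$; set $T_0(a,b^{-1}ab)=T$. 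As $T=\beta(S)$ for some $\beta\in\aut(F(a,b))$, we have $T^n=\beta(R)$, so $\langle a,b\mid T_0^n(a,b^{-1}ab)\rangle$ presents $G$, and Theorem \ref{thm:JSJclassificationONEREL}, (\ref{JSJclassificationONEREL:3.5})$\Rightarrow$(\ref{JSJclassificationONEREL:4}), identifies its \Z{}-JSJ decomposition with $\langle a,y,b\mid T_0(a,y)^n,\, y=b^{-1}ab\rangle$, rigid vertex group $\langle a,y\mid T_0^n(a,y)\rangle$, stable letter $b$; again Convention \ref{conv:JSJ} and Proposition \ref{prop:JSJsEverywhere} show this equals $\mathbf{\Gamma}$, with underlying graph a single rigid vertex and a single loop edge. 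In both cases $T_0(a,b^{-1}ab)=T$ is the word $T$ from the respective theorem.

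\emph{The inequality $|T_0|<|S|$.} In both cases $T=T_0(a,b^{-1}ab)$ is of shortest length in the $\aut(F(a,b))$-orbit of $S$ (recall $S=R$ in the torsion-free case), so in particular $|T|\le|S|$ and $T$ is cyclically reduced. Write $T_0$ as a reduced word in $F(a,y)$ with alternating $a$- and $y$-syllables; substituting $y^{j}\mapsto b^{-1}a^{j}b$ for $j\neq0$ and leaving $a$-syllables unchanged introduces no free cancellation at any $a$--$y$ or $y$--$a$ junction (and there are no $y$--$y$ junctions in a reduced word), so $|T|=|T_0|+2\,\operatorname{syl}_y(T_0)$, where $\operatorname{syl}_y(T_0)$ denotes the number of $y$-syllables of $T_0$. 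Since $G$ is one-ended, $T$ cannot be a power of $a$ — otherwise $G$ would be $\mathbb{Z}\ast C_m$ or $\mathbb{Z}\ast\mathbb{Z}$ — and hence $\operatorname{syl}_y(T_0)\ge1$; therefore $|T_0|=|T|-2\,\operatorname{syl}_y(T_0)\le|T|-2<|S|$. The only delicate point is this final count: checking that there is no cancellation when expanding $T_0(a,b^{-1}ab)$, so that the gap of $2\,\operatorname{syl}_y(T_0)$ is exact, together with the observation that one-endedness forces at least one $y$-syllable — these are what make the inequality strict.
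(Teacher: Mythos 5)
Your proposal is correct and follows essentially the same route as the paper: invoke condition (2) of the respective theorem to present $G$ as $\langle a, b\mid T_0^n(a, b^{-1}ab)\rangle$, rewrite this as an HNN-extension, and identify it with the (unique) \Z{}-JSJ decomposition via Theorem \ref{thm:KWquote} or Theorem \ref{thm:JSJclassificationONEREL} together with Proposition \ref{prop:JSJsEverywhere}. Your syllable-counting argument for $|T_0|<|T|\leq|S|$ is just a more explicit version of the paper's one-line observation that $T$ is not a power of a primitive (so $T_0$ must involve $y$), and your case split by $n$ correctly resolves which theorem supplies the word $T$.
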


\begin{proof}
By (\ref{flexibilityHyperbolic:2}) of Theorem \ref{thm:flexibilityHyperbolic} or \ref{thm:flexibilityWithTorsion}, as appropriate, $G$ admits a presentation of the form $\langle a, b\mid T_0^n(a, b^{-1}ab)\rangle$, $n\geq1$, where $T_0^n(a, b^{-1}ab)$ and $T^n(a, b)$ are the same words. Hence, $G$ has a presentation 
\[\langle a, b, y\mid T_0^n(a, y), b^{-1}ab=y\rangle,\]
which is an HNN-extension with stable letter $b$, and corresponds to a graph of groups decomposition $\bGamma$ as in the statement of the corollary.
This decomposition corresponds to the \ZC{}-JSJ decomposition of $G$ and has rigid vertex by Theorem \ref{thm:KWquote}, if $G$ is as in Theorem \ref{thm:flexibilityHyperbolic}, and by Theorem \ref{thm:JSJclassificationONEREL} otherwise.

Finally, as $T$ is not a power of a primitive we have that $|T_0|<|T|$, and so $|T_0|<|T|\leq|S|$, as required.
\end{proof}

\p{Algorithmic consequences}
We now prove Corollary \ref{corol:JSJcomp}, on computing \ZC{}-JSJ decompositions. Note that it is possible to calculate a precise bound for the time complexity of this algorithm, as the constants are known for the time complexity of Whitehead's algorithm in $F(a, b)$ \cite{Khan2004whitehead} \cite[Theorem 1.1]{Cooper2015Classification}.

\begin{corollary}[Corollary \ref{corol:JSJcomp}]
\label{corol:JSJcompBODYVERSION}
There exists an algorithm with input a presentation $\mathcal{P}=\langle a, b\mid R\rangle$ of a group $G$ from Theorem \ref{thm:flexibilityHyperbolic} or \ref{thm:flexibilityWithTorsion}, and with output the \ZC{}-JSJ decomposition for $G$.

This algorithm terminates in $O(|R|^2)$-steps.
\end{corollary}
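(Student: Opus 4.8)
The plan is to turn the equivalences of Theorems~\ref{thm:flexibilityHyperbolicBODYVERSION} and~\ref{thm:flexibilityWithTorsion}, together with the explicit description of the decomposition in Corollary~\ref{corol:JSJformBODYVERSION}, into an algorithm whose only expensive step is a run of Whitehead's algorithm in the rank-two free group $F(a,b)$; the quadratic time bound then comes directly from the known complexity of that algorithm in rank two \cite{Khan2004whitehead} \cite[Theorem~1.1]{Cooper2015Classification}.

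Here is the algorithm. Cyclically reduce $R$ (this replaces $G$ by an isomorphic group and costs $O(|R|)$), and compute the maximal root $S$ and the exponent $n$ with $R = S^n$, a standard string-periodicity computation costing $O(|R|)$. Run Whitehead's algorithm on $S$: this produces the minimal length $\ell$ of the $\aut(F(a,b))$-orbit of $S$ and the finite set $\mathcal M$ of all cyclically reduced words of length $\ell$ in that orbit, and in $F(a,b)$ it terminates in $O(|S|^2) = O(|R|^2)$ steps. For each $T \in \mathcal M$ test whether some cyclic permutation of $T$ lies in $\langle a, b^{-1}ab\rangle$; this is the purely local condition that, reading the cyclic word $T$, the occurrences of $b^{\pm 1}$ alternate in sign and consecutive ones are separated by a non-empty power of $a$, so each test costs $O(\ell)$. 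If some $T \in \mathcal M$ passes this test and is not conjugate to $[a,b]^{\pm 1}$ (equivalently $\ell \neq 4$, or $T$ is not a commutator of a basis of $F(a,b)$), fix a cyclic permutation of such a $T$ lying in $\langle a, b^{-1}ab\rangle$ and write it as $T_0(a, b^{-1}ab)$ with $T_0(a,y) \in F(a,y)$; output the graph of groups of Corollary~\ref{corol:JSJformBODYVERSION}, namely a single rigid vertex with group $\langle a, y \mid T_0(a,y)^n \rangle$ and a single loop edge with infinite cyclic edge group, attaching map $y = b^{-1}ab$ and stable letter $b$. In every other case (no $T$ passes, or a passing $T$ is conjugate to $[a,b]^{\pm 1}$) output the trivial \Z{}-JSJ decomposition, the single vertex with group $G$ and no edges.

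Correctness is bookkeeping. By Proposition~\ref{prop:whitehead}, if the $\aut(F(a,b))$-orbit of $S$ meets $\langle a, b^{-1}ab\rangle$ at all, then it does so in a word some cyclic permutation of which already lies in $\mathcal M$; so scanning $\mathcal M$ loses nothing. Theorems~\ref{thm:flexibilityHyperbolicBODYVERSION} and~\ref{thm:flexibilityWithTorsion} say exactly that $G$ has non-trivial \Z{}-JSJ decomposition if and only if some shortest-length $T$ in the orbit of $S$ lies in $\langle a, b^{-1}ab\rangle$ and is not conjugate to $[a,b]^{\pm 1}$ --- here we use that when $n=1$ the relator $R = S$ is conjugate to $[a,b]^{k}$ only for $k = \pm 1$, by uniqueness of roots, so the two theorems' exclusion clauses coincide --- and Corollary~\ref{corol:JSJformBODYVERSION} identifies that decomposition with the one the algorithm outputs. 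Convention~\ref{conv:JSJ} covers the remaining (Fuchsian, respectively $\mathbb Z^2$) inputs, for which the correct output is the trivial decomposition, which is what the algorithm produces. For the running time: cyclic reduction and root-finding are $O(|R|)$; Whitehead's algorithm is $O(|R|^2)$, and since it writes out all of $\mathcal M$ in that time, $\mathcal M$ has total bit-length $O(|R|^2)$, so the $O(\ell)$-per-element membership tests cost $O(|R|^2)$ in total; all remaining checks are linear. Hence the algorithm terminates in $O(|R|^2)$ steps.

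The one point that is not soft is the complexity input: one needs that Whitehead's algorithm in a rank-two free group both terminates in quadratically many steps and actually enumerates the full set of shortest representatives within that budget. Both are in the literature \cite{Khan2004whitehead} \cite[Theorem~1.1]{Cooper2015Classification}, and the precise constants can be extracted from there. Everything else --- the reduction to a single membership test against $\langle a, b^{-1}ab\rangle$, the legitimacy of restricting to shortest representatives (Proposition~\ref{prop:whitehead}), and the read-off of the decomposition (Corollary~\ref{corol:JSJformBODYVERSION}) --- has already been done.
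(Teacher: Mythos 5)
Your proposal is correct and takes essentially the same route as the paper: run Whitehead's algorithm on the root $S$ (quadratic time in rank two, citing Khan and Cooper), use Proposition \ref{prop:whitehead} to justify intersecting the shortest-length orbit representatives with $\langle a, b^{-1}ab\rangle$, and read off the output decomposition from Corollary \ref{corol:JSJformBODYVERSION}, with Theorems \ref{thm:flexibilityHyperbolicBODYVERSION} and \ref{thm:flexibilityWithTorsionBODYVERSION} giving correctness. The only deviation is that you explicitly screen out the case of $T$ conjugate to $[a,b]^{\pm1}$ and return the trivial decomposition per Convention \ref{conv:JSJ}, a check the paper's proof leaves implicit, so your version is if anything slightly more careful rather than genuinely different.
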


\begin{proof}
Write $R$ as $S^n$ with $n\geq1$ maximal.
Write $\mathcal{O}_S$ for the set of shortest elements in the $\aut(F(a, b))$-orbit of $S$.

Firstly, use Whitehead's algorithm to compute the set $\mathcal{O}_S$; this takes $O(|S|^2)\leq O(|R|^2)$ steps \cite{Khan2004whitehead}.
Then compute $\mathcal{T}:=\mathcal{O}_S\cap\langle a, b^{-1}ab\rangle$; Khan gave a linear bound on the cardinality of $\mathcal{O}_S$ \cite{Khan2004whitehead} and we can (constructively) test membership of any fixed finitely generated subgroup of a free group in time linear in the length of the input word, so this takes $O(|S|^2)\leq O(|R|^2)$ steps.
If the intersection $\mathcal{T}$ is empty then, by Theorem \ref{thm:flexibilityHyperbolic} or \ref{thm:flexibilityWithTorsion} as appropriate, the \ZC{}-JSJ decomposition of $G$ is trivial. Therefore, output as the \ZC{}-JSJ decomposition of $G$ the graph of groups consisting of a single vertex, with vertex group $G$, and no edges.

If $\mathcal{T}$ is non-empty then take some $T\in\mathcal{T}$.
Rewrite this word $T$ as a word $T_0(a, b^{-1}ab)$; this takes $O(|T|)\leq O(|R|)$ steps.
By Corollary \ref{corol:JSJform}, the \ZC{}-JSJ decomposition of $G$ is the graph of groups with a single vertex, a single loop edge, vertex group $\langle a, y\mid T_0^n(a, y)\rangle$ and the attaching map $a=byb^{-1}$, and so output this as the \ZC{}-JSJ decomposition of $G$.
\end{proof}

We next prove Corollary \ref{corol:JSJdetect}, on detecting non-trivial \ZC{}-JSJ decompositions.

\begin{corollary}[Corollary \ref{corol:JSJdetect}]
\label{corol:JSJdetectBODYVERSION}
There exists an algorithm with input a presentation $\mathcal{P}=\langle a, b\mid R\rangle$ of a group $G$ from Theorem \ref{thm:flexibilityHyperbolic}, and with output {\ttfamily\upshape yes} if the group $G$ has non-trivial \ZC{}-JSJ decomposition and {\ttfamily\upshape no} otherwise.

This algorithm terminates in $O(|R|)$-steps.
\end{corollary}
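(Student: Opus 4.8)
The plan is to reduce the detection problem to computing the Friedl--Tillmann polytope of $\mathcal{P}$ and inspecting its dimension. By the equivalence of (1) and (3) in Theorem \ref{thm:flexibilityHyperbolicBODYVERSION}, the group $G$ has non-trivial \Z{}-JSJ decomposition if and only if the Friedl--Tillmann polytope $P'$ of $\mathcal{P}$ is a straight line segment that is not a single point. So the whole task is to compute $P'$, or enough of it, from the relator $R$ in linear time, and then read off its dimension. The crucial point --- which is exactly why this comes out linear rather than quadratic like Corollary \ref{corol:JSJcompBODYVERSION} --- is that the Friedl--Tillmann polytope is a group invariant, so we may work with the given presentation $\mathcal{P}$ directly, with no appeal to Whitehead's algorithm.

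Concretely, I would run the procedure implicit in Figure \ref{fig:PolytopeExample} and in the proof of Lemma \ref{lem:StraightLinePolytopes}. First, abelianise $R$ one letter at a time, recording the sequence of at most $|R|+1$ lattice points traced by the resulting loop $\gamma$ in $\mathbb{Z}^2 \cong H_1(G;\mathbb{R})$ (the loop is closed because $R \in F(a,b)'$); this is $O(|R|)$ steps, and all coordinates have absolute value at most $|R|$. Second, compute the convex hull $P$ of these points: since the coordinates are bounded by $|R|$, sort the points by a counting sort in $O(|R|)$ steps and then apply the monotone-chain hull algorithm, obtaining $P$ together with its $O(|R|)$ edges in a further $O(|R|)$ steps. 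Third, recall from the proof of Lemma \ref{lem:StraightLinePolytopes} that $P = P' + \square$ for a unit square $\square$, so $P'$ is recovered as the Minkowski difference $\{z : z + \square \subseteq P\}$, computable from the edge list of $P$ in $O(|R|)$ steps. Finally, decide $\dim P'$: it is a point exactly when $P$ is a translate of $\square$, and it is $2$-dimensional exactly when $P'$ has positive area, so one outputs {\ttfamily\upshape yes} precisely when $P'$ has zero area but is not a point, and {\ttfamily\upshape no} otherwise. Summing the costs gives $O(|R|)$.

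The step I would watch most carefully is the convex hull, since the textbook bound is $O(|R|\log|R|)$; genuine linearity relies on the coordinates being bounded by $|R|$, which lets the sorting phase be done by counting sort. If one preferred to avoid computing the full hull, there is an alternative: $P'$ is a segment precisely when $P$ admits a direction $u$ of minimal width with $\mathrm{width}_P(u) = \mathrm{width}_\square(u) = |u_1| + |u_2|$, and the only candidate directions $u$ are the outer normals of the edges of $P$ --- so either way the problem comes down to understanding the lattice polygon $P$. I would present the convex-hull version, citing Lemma \ref{lem:StraightLinePolytopes} for the identity $P = P' + \square$ and Theorem \ref{thm:flexibilityHyperbolicBODYVERSION} together with Lemma \ref{lem:SinglePointPolytopes} for the fact that the excluded ``point'' case is exactly the case when $R$ is conjugate to a power of $[a,b]$, i.e.\ when $G$ is abelian.
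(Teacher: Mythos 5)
Your proposal is correct and takes essentially the same route as the paper, whose proof simply draws the Friedl--Tillmann polytope of $\mathcal{P}$ in $O(|R|)$ steps, checks in $O(|R|)$ steps whether it is a straight line but not a point, and invokes the equivalence of (1) and (3) in Theorem \ref{thm:flexibilityHyperbolicBODYVERSION}; your write-up merely supplies the implementation details (counting sort plus monotone chain for the hull, Minkowski difference with the unit square) that the paper leaves implicit. The only slip is the closing aside that the excluded point case means $G$ is abelian --- when $R$ is conjugate to $[a,b]^k$ with $k>1$ the group has torsion and is not abelian --- but this remark plays no role in the correctness of the algorithm.
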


\begin{proof}
We describe an algorithm to decide in $O(|R|)$ steps whether or not the Friedl--Tillmann polytope is a line; the result then follows from Theorem \ref{thm:flexibilityHyperbolic}.
We need to avoid computing the entire convex hull, because that cannot in general be done in linear time.
Suppose that the Friedl--Tillmann polytope $P$ is a line, namely from $(x_0, y_0)$ to $(x_1, y_1)$ with $x_0 \leq x_1$.
Then the Minkowski sum $P'$ of $P$ and $[0, 1] \times [0, 1]$ will be the convex hull of $6$ points (with only $4$ needed in the vertical or horizontal case).
If $y_0 \leq y_1$, the points are $(x_0+1, y_0), (x_0, y_0), (x_0, y_0+1)$ in the bottom left corner and $(x_1, y_1+1), (x_1+1, y_1+1), (x_1+1, y_1)$ in the top right corner, and the minimal bounding box (axis-parallel rectangle) has opposite corners $(x_0, y_0)$ and $(x_1+1, y_1+1)$.
If $y_0 > y_1$, the minimal bounding box instead has opposite corners $(x_0, y_0+1)$ and $(x_1+1, y_1)$

The algorithm proceeds as follows.
If necessary, we put the word into reduced form in linear time using a stack (for each letter, if it is inverse to the top of the stack then discard it and pop from the stack, else push it onto the stack).
We then determine two candidate polytopes $P_0'$ and $P_1'$, each on $6$ points, by computing the minimal bounding box of (the convex hull of) the path $\gamma$ by walking the path and keeping track of the minimal and maximal $x$ and $y$ coordinates visited, then applying the two possibilities for $P'$ in terms of the bounding box discussed in the previous paragraph.
It remains to check if the convex hull of $\gamma$ is contained in and contains some candidate.
For each candidate, we now walk the path again and check that we stay inside the polytope (a constant time check, as it is on $6$ points) and visit the $6$ defining points.
If either candidate matches, we have determined that the Friedl--Tillman polytope is a line, otherwise it is not.
\end{proof}

Finally, we prove Corollary \ref{corol:Outdetect}, on determining the commensurability classes of outer automorphism groups.
This result is only applicable to hyperbolic groups, as it relies on a connection between \ZC{}-JSJ decompositions and outer automorphism groups which is only known for hyperbolic groups.

\begin{corollary}[Corollary \ref{corol:Outdetect}]
\label{corol:OutdetectBODYVERSION}
There exists an algorithm with input a presentation $\mathcal{P}=\langle a, b\mid R\rangle$ of a hyperbolic group $G$ from Theorem \ref{thm:flexibilityHyperbolic} or \ref{thm:flexibilityWithTorsion} that determines which one of the following three possibilities holds: the outer automorphism group of $G$ is finite, is virtually $\mathbb{Z}$, or is isomorphic to $\operatorname{GL}_2(\mathbb{Z})$.

If $G$ is as in Theorem \ref{thm:flexibilityHyperbolic}, this algorithm terminates in $O(|R|)$-steps. Else, it terminates in $O(|R|^2)$-steps.
\end{corollary}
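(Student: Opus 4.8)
Throughout, write $G$ as $\langle a,b\mid S^n\rangle$ with $n\geqslant 1$ maximal, and note that in both settings $G$ is one-ended (by Proposition \ref{prop:Fuchsian}, using for Theorem \ref{thm:flexibilityHyperbolic} that $R\in F(a,b)'\setminus\{1\}$ forces $S$ non-primitive and non-empty). The plan is to reduce the trichotomy for $\out(G)$ to two algorithmically accessible properties of $G$: whether $G$ is Fuchsian, and whether the \Z{}-JSJ decomposition of $G$ is trivial. First I would record the underlying structural dichotomy. By Proposition \ref{prop:Fuchsian}, $G$ is Fuchsian if and only if $G\cong\langle a,b\mid [a,b]^n\rangle$ for some $n\geqslant 2$ (the value $n=1$ gives $\mathbb{Z}^2$, excluded by hyperbolicity; in the torsion setting $n>1$ is assumed and in the torsion-free hyperbolic setting $G$ is never a closed surface group, since a $2$-generated closed surface group is abelian or Klein-bottle). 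For such $G$ a direct computation — or the Fuchsian case of \cite[Theorem A]{Logan2016Outer} — gives $\out(G)\cong\operatorname{GL}_2(\mathbb{Z})$ acting faithfully on $H_1(G;\mathbb{Z})\cong\mathbb{Z}^2$. If $G$ is \emph{not} Fuchsian, then $\out(G)$ is either finite or infinite and virtually $\mathbb{Z}$: for one-relator groups with torsion this is \cite[Theorem A]{Logan2016Outer}, and for torsion-free hyperbolic $G$ it follows because, by Corollary \ref{corol:JSJformBODYVERSION} and Lemma \ref{lem:RGnoflexible}, the \Z{}-JSJ of $G$ has no flexible vertices and is either trivial or a single rigid vertex with a single loop edge, so Levitt's description of $\out$ of a hyperbolic group via its \Z{}-JSJ \cite{levitt2005automorphisms} exhibits $\out(G)$ as an extension of a finite group (automorphisms of the underlying graph and of the rigid vertex group relative to its two incident edge groups) by the group generated by at most one Dehn twist. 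Finally, $\out(G)$ is infinite precisely when $G$ admits an essential $\mathbb{Z}$-splitting, equivalently when the \Z{}-JSJ of $G$ is non-trivial: one direction combines \cite[Theorem A]{logan2016JSJ} with \cite[Theorem 5.1]{levitt2005automorphisms}, while the converse holds for one-relator groups with torsion by \cite[Lemma 5.1]{Logan2016Outer} and for torsion-free hyperbolic groups by the theory of $\mathbb{R}$-tree actions and automorphisms of hyperbolic groups (Paulin, Rips, Bowditch).

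Granting this, the algorithm is: decide whether $G$ is Fuchsian, and if so output $\operatorname{GL}_2(\mathbb{Z})$; otherwise decide whether the \Z{}-JSJ of $G$ is trivial, outputting that $\out(G)$ is finite if it is and virtually $\mathbb{Z}$ if it is not. It remains to carry out both decisions within the stated bounds. If $G$ is as in Theorem \ref{thm:flexibilityHyperbolic} — equivalently, if $R\in F(a,b)'$, which we test in $O(|R|)$ steps from exponent sums — I would compute the Friedl--Tillmann polytope of $\mathcal{P}$ together with its dimension, which takes $O(|R|)$ steps by (the proof of) Corollary \ref{corol:JSJdetectBODYVERSION}. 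By Lemma \ref{lem:SinglePointPolytopes} the polytope is a single point exactly when $R$ is conjugate to some $[a,b]^k$, hence (given hyperbolicity) exactly when $G\cong\langle a,b\mid[a,b]^n\rangle$ with $n\geqslant 2$, i.e. exactly when $G$ is Fuchsian; by Theorem \ref{thm:flexibilityHyperbolicBODYVERSION} the polytope is a non-degenerate segment exactly when the \Z{}-JSJ is non-trivial; and otherwise it is two-dimensional and the \Z{}-JSJ is trivial. Thus a single linear-time polytope computation settles both decisions, giving $O(|R|)$ overall. If instead $G$ is only as in Theorem \ref{thm:flexibilityWithTorsion}, so $n>1$ and $R\notin F(a,b)'$, I would run Whitehead's algorithm to compute the set $\mathcal{O}_S$ of shortest representatives of the $\aut(F(a,b))$-orbit of $S$ in $O(|S|^2)=O(|R|^2)$ steps \cite{Khan2004whitehead}; then $G$ is Fuchsian iff some cyclic shift of some element of $\mathcal{O}_S$ equals $[a,b]^{\pm 1}$, and, failing that, $G$ has non-trivial \Z{}-JSJ iff $\mathcal{O}_S\cap\langle a,b^{-1}ab\rangle\neq\emptyset$, by Theorem \ref{thm:flexibilityWithTorsionBODYVERSION} (the clause ``$T$ not conjugate to $[a,b]^{\pm 1}$'' there is automatic once Fuchsianness has been excluded, since being conjugate to $[a,b]^{\pm 1}$ is an invariant of the $\aut(F(a,b))$-orbit up to conjugacy by \cite[Theorem 3.9]{mks}). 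Both post-processing steps are linear in $|R|$ given $\mathcal{O}_S$, so the total is $O(|R|^2)$.

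The main obstacle is the structural dichotomy of the first paragraph rather than the algorithmics. One must be sure that, outside the Fuchsian case, no group in our class has $\out$ that is infinite but not virtually $\mathbb{Z}$ — in particular never $\operatorname{GL}_2(\mathbb{Z})$ — and that ``$\out(G)$ is infinite'' coincides exactly with ``the \Z{}-JSJ of $G$ is non-trivial''. For one-relator groups with torsion this is packaged into \cite{Logan2016Outer}; the delicate part is the torsion-free hyperbolic case, where one has to combine the absence of flexible vertices and the single-loop-edge form of the \Z{}-JSJ from Corollary \ref{corol:JSJformBODYVERSION} with Levitt's analysis of $\out$ via the \Z{}-JSJ \cite{levitt2005automorphisms} and with the standard results (Paulin, Rips, Bowditch) forcing an essential cyclic splitting whenever $\out$ is infinite. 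Once this dichotomy is in hand, the algorithm and its complexity follow from Corollaries \ref{corol:JSJdetectBODYVERSION} and \ref{corol:JSJcompBODYVERSION} and Whitehead's algorithm with essentially no further work.
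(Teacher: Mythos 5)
Your proposal is correct and follows essentially the same route as the paper: reduce the trichotomy to (i) the check that $R$ is conjugate to $[a,b]^k$ (equivalently, the Fuchsian case), giving $\operatorname{GL}_2(\mathbb{Z})$, and (ii) (non-)triviality of the \Z{}-JSJ decomposition, which via \cite[Theorem 5.1]{levitt2005automorphisms} (together with Theorems \ref{thm:KWquote} and \ref{thm:JSJclassificationONEREL}) separates virtually-$\mathbb{Z}$ from finite, with the complexity coming from the polytope (Corollary \ref{corol:JSJdetectBODYVERSION}, $O(|R|)$) when $R\in F(a,b)'$ and from Whitehead's algorithm (Corollary \ref{corol:JSJcompBODYVERSION}, $O(|R|^2)$) otherwise. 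The extra machinery you invoke for the torsion-free case (Paulin--Rips--Bowditch, Levitt's structural description of $\out$) is not needed beyond what the paper's own results already supply, but it does no harm.
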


\begin{proof}
If $R$ is conjugate to $[a, b]^{n}$ for some $n\in\mathbb{Z}$ then $\out(G)\cong\operatorname{GL}_2(\mathbb{Z})$ \cite[Theorem A]{Logan2016Outer}. Else if $G$ has non-trivial \ZC{}-JSJ decomposition then, by Theorems \ref{thm:KWquote} and \ref{thm:JSJclassificationONEREL}, $G$ has an essential $\mathbb{Z}$-splitting, and so $\out(G)$ is virtually $\mathbb{Z}$ \cite[Theorem 5.1]{levitt2005automorphisms}. Else, by Theorems \ref{thm:KWquote} and \ref{thm:JSJclassificationONEREL}, $G$ admits no essential $\mathbb{Z}$-splittings, and so $\out(G)$ is finite \cite[Theorem 5.1]{levitt2005automorphisms}.

Our algorithm is therefore as follows:
Firstly, determine whether $R$ is conjugate to $[a, b]^n$ for some $n\in\mathbb{Z}$; this takes $O(|R|)$ steps. If it is, then $\out(G)\cong\operatorname{GL}_2(\mathbb{Z})$. Otherwise determine whether $G$ has trivial \ZC{} JSJ decomposition; this takes $O(|R|)$ steps if $R\in F(a, b)'$, by Corollary \ref{corol:JSJdetect}, and otherwise takes $O(|R|^2)$ steps, by Corollary \ref{corol:JSJcomp}. If $G$ has non-trivial \ZC{}-JSJ decomposition $\out(G)$ is virtually $\mathbb{Z}$, else $\out(G)$ is finite.
\end{proof}

\p{Relationships between outer automorphism groups}
We now prove Corollary \ref{corol:OutCommensurability}, on embeddings of outer automorphism groups.
As with the above, this requires the groups involved to be hyperbolic.

\begin{corollary}[Corollary \ref{corol:OutCommensurability}]
\label{corol:OutCommensurabilityBODYVERSION}
Write $G_k$ for the group defined by $\langle a, b\mid S^k\rangle$, where $k\geqslant1$ is maximal. If $S\in F(a, b)' \setminus \{1\}$ and $G_1$ is hyperbolic then:
\begin{enumerate}
\item\label{OutCommensurability:1} $\out(G_m)\cong\out(G_n)$ for all $m, n>1$.
\item\label{OutCommensurability:2} $\out(G_n)$ embeds with finite index in $\out(G_1)$.
\end{enumerate}
\end{corollary}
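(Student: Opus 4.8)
The plan is to prove the two parts separately, leaning on the structural results of the paper together with the explicit computation of outer automorphism groups of two-generator one-relator groups with torsion in \cite{Logan2016Outer}. Since $S\in F(a,b)'$, the word $S$ is automatically non-primitive (and non-trivial, hence $S$ is not a proper power by our convention that the exponent is maximal), so part (\ref{OutCommensurability:1}), that $\out(G_m)\cong\out(G_n)$ for all $m,n>1$, is a special case of the main result of \cite{Logan2016Outer}, which we only reproduce for completeness; no new ingredient is needed there. Note also that $G_1$ hyperbolic forces $S$ not to be conjugate to $[a,b]^{\pm1}$, so none of the $G_k$ is Fuchsian, and by Proposition~\ref{prop:Fuchsian} each $G_k$ is one-ended.

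For part (\ref{OutCommensurability:2}), fix $n>1$; the first step is to produce the homomorphism $\Phi\colon\out(G_n)\to\out(G_1)$ induced by the natural surjection $\pi\colon G_n\twoheadrightarrow G_1$. Since $S$ is not a proper power, $G_n$ is a one-relator group with torsion, and by the B.~B.~Newman spelling theorem \cite[Theorem IV.5.5]{L-S} its elements of finite order are precisely the conjugates of non-trivial powers of $S$. Hence the kernel $K=\langle\langle S\rangle\rangle$ of $\pi$ is generated by the set of all torsion elements of $G_n$, and is therefore characteristic in $G_n$. Consequently every $\alpha\in\aut(G_n)$ descends to $\bar\alpha\in\aut(G_1)$, and inner automorphisms descend to inner automorphisms, giving the desired homomorphism $\Phi\colon\out(G_n)\to\out(G_1)$.

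The main obstacle is the injectivity of $\Phi$. After composing with an inner automorphism of $G_n$ we may assume that $\alpha\in\aut(G_n)$ induces the identity on $G_1$, so $\alpha(g)=g\cdot c(g)$ for a $1$-cocycle $c\colon G_n\to K$; since $G_1$ is torsion-free, one-ended and hyperbolic it has trivial centre, so $\alpha$ being inner in $G_n$ is equivalent to $c$ being a coboundary valued in $K$, and this is what must be shown. When $G_n$ has non-trivial \Z{}-JSJ decomposition I would exploit the explicit HNN description of Corollary~\ref{corol:JSJformBODYVERSION}, namely $G_n=\langle a,y,b\mid T_0^n(a,y),\ b^{-1}ab=y\rangle$ with $K=\langle\langle T_0^n(a,b^{-1}ab)\rangle\rangle$: the Bass--Serre tree of this splitting is the \Z{}-JSJ tree, hence $\alpha$-invariant up to the standard deformation moves, reducing the claim to the analogous statement for the vertex group $\langle a,y\mid T_0^n\rangle$ relative to the peripheral subgroups $\langle a\rangle$ and $\langle y\rangle$ --- a group of the same shape but with strictly shorter relator, as $|T_0|<|S|$. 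An induction on relator length then leaves only the case of trivial \Z{}-JSJ decomposition, equivalently of finite outer automorphism group, where one appeals directly to the explicit description of $\out(G_n)$ and $\out(G_1)$ in \cite{Logan2016Outer}. I expect this reduction to be the delicate point.

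Finally, once $\Phi$ is known to be injective, the finite index of its image is routine. By the analysis in the proof of Corollary~\ref{corol:OutdetectBODYVERSION} (using that $S\in F(a,b)'$ is not conjugate to any $[a,b]^k$), $\out(G_1)$ is either finite or virtually $\mathbb{Z}$, the former precisely when $G_1$ has trivial \Z{}-JSJ decomposition. If $\out(G_1)$ is finite, then any subgroup has finite index. If $\out(G_1)$ is virtually $\mathbb{Z}$, then, since by Theorems~\ref{thm:flexibilityHyperbolicBODYVERSION} and \ref{thm:flexibilityWithTorsionBODYVERSION} the triviality of the \Z{}-JSJ decomposition of $G_k$ depends only on $S$ and not on $k$, the group $G_n$ also has non-trivial \Z{}-JSJ decomposition, so $\out(G_n)$ is infinite; its injective image is then an infinite subgroup of a virtually-$\mathbb{Z}$ group, hence of finite index. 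This completes the plan.
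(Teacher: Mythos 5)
Your construction of the map and your finite-index argument are fine and match the paper: the embedding is indeed induced by the natural surjection $G_n\twoheadrightarrow G_1$ (your observation that the kernel is generated by the torsion elements, hence characteristic, is a clean way to see it is well defined), and once injectivity is known, the dichotomy ``$\out(G_1)$ finite or virtually $\mathbb{Z}$'' together with Theorems \ref{thm:flexibilityHyperbolicBODYVERSION} and \ref{thm:flexibilityWithTorsionBODYVERSION} gives finite index exactly as in the paper. Part (\ref{OutCommensurability:1}) is also acceptable as a citation, since it is essentially contained in \cite{Logan2016Outer}.

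However, the heart of part (\ref{OutCommensurability:2}) --- injectivity --- is where your proposal has a genuine gap, and you flag it yourself. The inductive reduction via the \Z{}-JSJ tree is only a sketch: you would need to show that an automorphism of $G_n$ inducing the identity on $G_1$ restricts (after the deformation-space ambiguity is controlled) to an automorphism of the vertex group $\langle a,y\mid T_0^n\rangle$ that is trivial modulo the corresponding characteristic kernel \emph{relative to the peripheral subgroups} $\langle a\rangle,\langle y\rangle$, and that this relative statement both implies the original one and is inherited by the shorter-relator group; none of this is carried out, and it is not a routine Bass--Serre argument. More seriously, the base case is unsupported: when the \Z{}-JSJ decomposition is trivial, knowing that $\out(G_n)$ and $\out(G_1)$ are finite says nothing about injectivity of $\Phi$, and \cite{Logan2016Outer} contains no ``explicit description of $\out(G_1)$'' to appeal to --- its results are specifically about one-relator groups \emph{with torsion}, whereas $G_1$ is torsion-free (and its automorphisms need not even be induced by automorphisms of $F(a,b)$ a priori). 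Note also that the hypothesis $S\in F(a,b)'$ never enters your injectivity argument, whereas the paper stresses it is essential for the embedding. The paper's route avoids all of this: by Pride's result every automorphism of $G_n$ ($n>1$) is induced by a Nielsen transformation of $F(a,b)$, giving an isomorphism $\theta_n\colon H_n\to\out(G_n)$ with $H_n\leqslant\out(F(a,b))$, and the composite $H_n\to\out(G_1)$ is shown to be injective by rerunning the proof of \cite[Lemma 4.4]{Logan2016Outer} (this is where $S\in F(a,b)'$ is used); surjectivity of $\theta_n$ then forces $\xi_n\colon\out(G_n)\to\out(G_1)$ to be injective. To complete your write-up you would either have to supply the relative JSJ induction and a genuine base-case argument, or switch to this lifting argument.
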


\begin{proof}
We first prove (\ref{OutCommensurability:1}). For $k>1$, every automorphism $\phi$ of $G_k$ is induced by a Nielsen transformation $\phi_t$ of $F(a, b)$ \cite[Principal Lemma]{Pride1977}. This gives a homomorphism $\theta_k:H_k\twoheadrightarrow \out(G_k)$ for some subgroup $H_k$ of $\out(F(a, b))$. Now, such a map $\phi_t$ sends $S$ to a conjugate of $S$ or of $S^{-1}$ in $F(a,b)$ \cite[Theorem N5]{mks}, so the maps are dependent only on the word $S$, and independent of the exponent $k$ in $S^k$, and therefore $H_m=H_n$ for all $m, n>1$. Moreover, the maps $\theta_k$ are all isomorphisms \cite[Theorem 4.2]{Logan2016Outer}, and so we have $\out(G_m)\cong H_m=H_n\cong\out(G_n)$ as required.

For (\ref{OutCommensurability:2}), as we observed in the previous paragraph, the automorphisms of $G_n$ depend solely on $S$ and so we have a sequence of homomorphisms $H_n\xrightarrow{\theta_n}\out(G_n)\xrightarrow{\xi_n} \out(G_1)$ where $\theta_n$ is injective (it is an isomorphism in fact). Then $\xi_n\theta_n$ is injective, by an identical proof to the proof that $\theta_n$ is injective \cite[Lemma 4.4]{Logan2016Outer}, and so $\xi_n$ is injective. To see that $\im(\xi_n)$ has finite index in $\out(G_1)$ note that because $G_1$ is hyperbolic, $S$ is not conjugate to $[a, b]^{\pm1}$, and so $G_1$ has non-trivial \ZC{}-JSJ decomposition if and only if $G_n$ has non-trivial \ZC{}-JSJ decomposition by Theorems \ref{thm:flexibilityHyperbolic} and \ref{thm:flexibilityWithTorsion}.
If both \ZC{}-JSJ decompositions are non-trivial then, by Corollary \ref{corol:JSJform}, both \ZC{}-JSJ decomposition are essential $\mathbb{Z}$-splittings consisting of a single rigid vertex with a single loop edge, and so both $\out(G_1)$ and $\out(G_n)$ are virtually $\mathbb{Z}$ \cite[Theorem 5.1]{levitt2005automorphisms}, and the result follows.
If both \ZC{}-JSJ decompositions are trivial then both $\out(G_1)$ and $\out(G_n)$ are finite \cite[Theorem 5.1]{levitt2005automorphisms}, and the result follows.
\end{proof}

The above proof uses the fact that $S\in F(a, b)'$ in two places; firstly to prove that $\out(G_n)$ embeds in $\out(G_1)$ (which is hidden in the citation), and secondly to obtain finite index (by using Theorem \ref{thm:flexibilityHyperbolic}).
If $S\not\in F(a, b)'$ then we can still ask if $\out(G_n)$ embeds in $\out(G_1)$. Indeed, we still obtain a sequence $H_n\xrightarrow{\theta_n}\out(G_n)\xrightarrow{\xi_n} \out(G_1)$, but here the proof that $\theta_n$ is injective does not extend to the map $\xi_n\theta_n$ \cite[Lemma 4.6]{Logan2016Outer} (as the proof uses the theory of one-relator groups with torsion, in the form of a strengthened version of the B.B. Newman Spelling Theorem).
Therefore, we are currently unable to prove that $\out(G_n)$ embeds in $\out(G_1)$ in general.

\bibliographystyle{amsalpha}
\bibliography{BibTexBibliography}

\providecommand{\bysame}{\leavevmode\hbox to3em{\hrulefill}\thinspace}
\providecommand{\MR}{\relax\ifhmode\unskip\space\fi MR }
\providecommand{\MRhref}[2]{%
  \href{http://www.ams.org/mathscinet-getitem?mr=#1}{#2}
}
\providecommand{\href}[2]{#2}
\begin{thebibliography}{\v{C}71}

\bibitem[AG99]{Allcock1999homological}
D.~J. Allcock and S.~M. Gersten, \emph{A homological characterization of hyperbolic groups}, Invent. Math. \textbf{135} (1999), no.~3, 723--742. \MR{1669272}

\bibitem[Bar18]{Barrett2018Computing}
Benjamin Barrett, \emph{Computing {JSJ} decompositions of hyperbolic groups}, J. Topol. \textbf{11} (2018), no.~2, 527--558. \MR{3828057}

\bibitem[Bow98]{bowditch1998cut}
Brian~H. Bowditch, \emph{Cut points and canonical splittings of hyperbolic groups}, Acta Math. \textbf{180} (1998), no.~2, 145--186. \MR{1638764}

\bibitem[CF09]{Clay2009Whitehead}
Matt Clay and Max Forester, \emph{Whitehead moves for {$G$}-trees}, Bull. Lond. Math. Soc. \textbf{41} (2009), no.~2, 205--212. \MR{2496498}

\bibitem[CL83]{collins1983automorphisms}
D.J. Collins and F.~Levin, \emph{Automorphisms and {H}opficity of certain {B}aumslag-{S}olitar groups}, Arch. Math. (Basel) \textbf{40} (1983), no.~5, 385--400. \MR{707725}

\bibitem[CR15]{Cooper2015Classification}
Bobbe Cooper and Eric Rowland, \emph{Classification of automorphic conjugacy classes in the free group on two generators}, Algorithmic problems of group theory, their complexity, and applications to cryptography, Contemp. Math., vol. 633, Amer. Math. Soc., Providence, RI, 2015, pp.~13--40. \MR{3364219}

\bibitem[DG08]{Dahmani2008ToralIso}
Fran\c{c}ois Dahmani and Daniel Groves, \emph{The isomorphism problem for toral relatively hyperbolic groups}, Publ. Math. Inst. Hautes \'{E}tudes Sci. (2008), no.~107, 211--290. \MR{2434694}

\bibitem[DG11]{dahmani2011isomorphism}
Fran\c{c}ois Dahmani and Vincent Guirardel, \emph{The isomorphism problem for all hyperbolic groups}, Geom. Funct. Anal. \textbf{21} (2011), no.~2, 223--300. \MR{2795509}

\bibitem[DL07]{DicksLinnell2007}
Warren Dicks and Peter~A. Linnell, \emph{{$L^2$}-{B}etti numbers of one-relator groups}, Math. Ann. \textbf{337} (2007), no.~4, 855--874. \MR{2285740}

\bibitem[DT19]{Dahmani2019Deciding}
Fran\c{c}ois Dahmani and Nicholas Touikan, \emph{Deciding isomorphy using {D}ehn fillings, the splitting case}, Invent. Math. \textbf{215} (2019), no.~1, 81--169. \MR{3904450}

\bibitem[FL17]{FriedlLueck2017}
Stefan Friedl and Wolfgang L\"{u}ck, \emph{Universal {$L^2$}-torsion, polytopes and applications to 3-manifolds}, Proc. Lond. Math. Soc. (3) \textbf{114} (2017), no.~6, 1114--1151. \MR{3661347}

\bibitem[Fox53]{Fox1953}
Ralph~H. Fox, \emph{Free differential calculus. {I}. {D}erivation in the free group ring}, Ann. of Math. (2) \textbf{57} (1953), 547--560. \MR{53938}

\bibitem[FT20]{Friedl2015Two}
Stefan Friedl and Stephan Tillmann, \emph{Two-generator one-relator groups and marked polytopes}, Ann. Inst. Fourier (Grenoble) \textbf{70} (2020), no.~2, 831--879. \MR{4105952}

\bibitem[Ger92]{Gersten1992}
S.~M. Gersten, \emph{Problems on automatic groups}, Algorithms and classification in combinatorial group theory ({B}erkeley, {CA}, 1989), Math. Sci. Res. Inst. Publ., vol.~23, Springer, New York, 1992, pp.~225--232. \MR{1230636}

\bibitem[GKL21]{gardam2021algebraically}
Giles Gardam, Dawid Kielak, and Alan~D Logan, \emph{Algebraically hyperbolic groups}, Groups Geom. Dyn. (to appear), arXiv:2112.01331 (2021).

\bibitem[GL17]{Guirardel2017JSJ}
Vincent Guirardel and Gilbert Levitt, \emph{J{SJ} decompositions of groups}, Ast\'{e}risque (2017), no.~395, vii+165. \MR{3758992}

\bibitem[HK20]{HennekeKielak2020}
Fabian Henneke and Dawid Kielak, \emph{The agrarian polytope of two-generator one-relator groups}, J. Lond. Math. Soc. (2) \textbf{102} (2020), no.~2, 722--748. \MR{4171432}

\bibitem[JZL25]{JaikinLinton2023}
Andrei Jaikin-Zapirain and Marco Linton, \emph{On the coherence of one-relator groups and their group algebras}, Ann. of Math. (2) \textbf{201} (2025), no.~3, 909--959. \MR{4899802}

\bibitem[JZLA20]{Jaikin-ZapirainLopez-Alvarez2018}
Andrei Jaikin-Zapirain and Diego L\'{o}pez-\'{A}lvarez, \emph{The strong {A}tiyah and {L}\"{u}ck approximation conjectures for one-relator groups}, Math. Ann. \textbf{376} (2020), no.~3-4, 1741--1793. \MR{4081128}

\bibitem[Kha04]{Khan2004whitehead}
Bilal Khan, \emph{The structure of automorphic conjugacy in the free group of rank two}, Computational and experimental group theory, Contemp. Math., vol. 349, Amer. Math. Soc., Providence, RI, 2004, pp.~115--196. \MR{2077762}

\bibitem[KS71]{Karrass1971subgroups}
A.~Karrass and D.~Solitar, \emph{Subgroups of {${\rm HNN}$} groups and groups with one defining relation}, Canadian J. Math. \textbf{23} (1971), 627--643. \MR{301102}

\bibitem[KW99a]{Kapovich1999structure}
Ilya Kapovich and Richard Weidmann, \emph{On the structure of two-generated hyperbolic groups}, Math. Z. \textbf{231} (1999), no.~4, 783--801. \MR{1709496}

\bibitem[KW99b]{Kapovich1999TwoGen}
\bysame, \emph{Two-generated groups acting on trees}, Arch. Math. (Basel) \textbf{73} (1999), no.~3, 172--181. \MR{1705011}

\bibitem[Lev05a]{levitt2005automorphisms}
Gilbert Levitt, \emph{Automorphisms of hyperbolic groups and graphs of groups}, Geom. Dedicata \textbf{114} (2005), 49--70. \MR{2174093}

\bibitem[Lev05b]{Levitt2005Characterizing}
\bysame, \emph{Characterizing rigid simplicial actions on trees}, Geometric methods in group theory, Contemp. Math., vol. 372, Amer. Math. Soc., Providence, RI, 2005, pp.~27--33. \MR{2139674}

\bibitem[Lin24]{Linton2024}
Marco Linton, \emph{Hyperbolic one-relator groups}, Canadian Journal of Mathematics (2024), published online, https://doi.org/10.4153/S0008414X24000427.

\bibitem[LL78]{LewinLewin1978}
Jacques Lewin and Tekla Lewin, \emph{An embedding of the group algebra of a torsion-free one-relator group in a field}, J. Algebra \textbf{52} (1978), no.~1, 39--74. \MR{485972}

\bibitem[Log16a]{logan2016JSJ}
Alan~D. Logan, \emph{The {JSJ}-decompositions of one-relator groups with torsion}, Geom. Dedicata \textbf{180} (2016), 171--185. \MR{3451463}

\bibitem[Log16b]{Logan2016Outer}
\bysame, \emph{The outer automorphism groups of two-generator, one-relator groups with torsion}, Proc. Amer. Math. Soc. \textbf{144} (2016), no.~10, 4135--4150. \MR{3531167}

\bibitem[LS77]{L-S}
Roger~C. Lyndon and Paul~E. Schupp, \emph{Combinatorial group theory}, Classics in Mathematics, Springer-Verlag, Berlin, 1977, Reprint of the 1977 edition. \MR{1812024}

\bibitem[LT17]{Louder2017Strong}
Larsen Louder and Nicholas Touikan, \emph{Strong accessibility for finitely presented groups}, Geom. Topol. \textbf{21} (2017), no.~3, 1805--1835. \MR{3650082}

\bibitem[LW20]{louder2020one}
Larsen Louder and Henry Wilton, \emph{One-relator groups with torsion are coherent}, Math. Res. Lett. \textbf{27} (2020), no.~5, 1499--1512. \MR{4216595}

\bibitem[LW22]{louder2018negative}
\bysame, \emph{Negative immersions for one-relator groups}, Duke Math. J. \textbf{171} (2022), no.~3, 547--594. \MR{4382976}

\bibitem[Lyn50]{Lyndon1950}
Roger~C. Lyndon, \emph{Cohomology theory of groups with a single defining relation}, Ann. of Math. (2) \textbf{52} (1950), 650--665. \MR{47046}

\bibitem[Mag30]{magnus1930freiheitssatz}
Wilhelm Magnus, \emph{\"{U}ber diskontinuierliche {G}ruppen mit einer definierenden {R}elation. ({D}er {F}reiheitssatz)}, J. Reine Angew. Math. \textbf{163} (1930), 141--165. \MR{1581238}

\bibitem[Mag32]{magnus1932wordproblem}
\bysame, \emph{Das {I}dentit\"{a}tsproblem f\"{u}r {G}ruppen mit einer definierenden {R}elation}, Math. Ann. \textbf{106} (1932), no.~1, 295--307. \MR{1512760}

\bibitem[MKS04]{mks}
Wilhelm Magnus, Abraham Karrass, and Donald Solitar, \emph{Combinatorial group theory}, second ed., Dover Publications, Inc., Mineola, NY, 2004, Presentations of groups in terms of generators and relations. \MR{2109550}

\bibitem[MS20]{SurajKrishna2020Immersed}
Suraj~Krishna Meda~Satish, \emph{Immersed cycles and the {JSJ} decomposition}, Algebr. Geom. Topol. \textbf{20} (2020), no.~4, 1877--1938. \MR{4127086}

\bibitem[NB21]{brodda2020b}
Carl-Fredrik Nyberg-Brodda, \emph{The {B} {B} {N}ewman spelling theorem}, Br. J. Hist. Math. \textbf{36} (2021), no.~2, 117--131. \MR{4280006}

\bibitem[New73]{Newman1973Soluble}
B.~B. Newman, \emph{The soluble subgroups of a one-relator group with torsion}, J. Austral. Math. Soc. \textbf{16} (1973), 278--285, Collection of articles dedicated to the memory of Hanna Neumann, III. \MR{0338188}

\bibitem[Pri77]{Pride1977}
Stephen~J. Pride, \emph{The isomorphism problem for two-generator one-relator groups with torsion is solvable}, Trans. Amer. Math. Soc. \textbf{227} (1977), 109--139. \MR{430085}

\bibitem[Ros86]{Rosenberger1986generating}
Gerhard Rosenberger, \emph{All generating pairs of all two-generator {F}uchsian groups}, Arch. Math. (Basel) \textbf{46} (1986), no.~3, 198--204. \MR{834836}

\bibitem[RS97]{rips1997cyclic}
E.~Rips and Z.~Sela, \emph{Cyclic splittings of finitely presented groups and the canonical {JSJ} decomposition}, Ann. of Math. (2) \textbf{146} (1997), no.~1, 53--109. \MR{1469317}

\bibitem[Sel95]{sela1995isomorphism}
Z.~Sela, \emph{The isomorphism problem for hyperbolic groups. {I}}, Ann. of Math. (2) \textbf{141} (1995), no.~2, 217--283. \MR{1324134}

\bibitem[Sel97]{Sela1997Structure}
\bysame, \emph{Structure and rigidity in ({G}romov) hyperbolic groups and discrete groups in rank {$1$} {L}ie groups. {II}}, Geom. Funct. Anal. \textbf{7} (1997), no.~3, 561--593. \MR{1466338}

\bibitem[Sel09]{Sela2009}
\bysame, \emph{Diophantine geometry over groups. {VII}. {T}he elementary theory of a hyperbolic group}, Proc. Lond. Math. Soc. (3) \textbf{99} (2009), no.~1, 217--273. \MR{2520356}

\bibitem[Ser03]{trees}
Jean-Pierre Serre, \emph{Trees}, Springer Monographs in Mathematics, Springer-Verlag, Berlin, 2003, Translated from the French original by John Stillwell, Corrected 2nd printing of the 1980 English translation. \MR{1954121}

\bibitem[\v{C}71]{Cebotar1971subgroups}
A.~A. \v{C}ebotar', \emph{Subgroups of groups with one defining relation that do not contain free subgroups of rank {$2$}}, Algebra i Logika \textbf{10} (1971), 570--586. \MR{0313404}

\bibitem[Wis12]{wise2012riches}
Daniel~T. Wise, \emph{From riches to raags: 3-manifolds, right-angled {A}rtin groups, and cubical geometry}, CBMS Regional Conference Series in Mathematics, vol. 117, Published for the Conference Board of the Mathematical Sciences, Washington, DC; by the American Mathematical Society, Providence, RI, 2012. \MR{2986461}

\end{thebibliography}
\end{document}